\newtheorem{lemma}{Lemma}[section]
\newtheorem{proposition}[lemma]{Proposition}
\newtheorem{theorem}[lemma]{Theorem}
\newtheorem{conjecture}[lemma]{Conjecture}
\newtheorem{corollary}[lemma]{Corollary}
\newtheorem{assumption}[lemma]{Assumption}
\newtheorem{remark}[lemma]{Remark}
 \newtheoremstyle{TheoremNum}
        {\topsep}{\topsep}              
        {\itshape}                      
        {}                              
        {\bfseries}                     
        {.}                             
        { }                             
        {\thmname{#1}\thmnote{ \bfseries #3}}
    \theoremstyle{TheoremNum}
\newtheorem{thmn}{Theorem}
\newtheoremstyle{PropNum}
        {\topsep}{\topsep}              
        {\itshape}                      
        {}                              
        {\bfseries}                     
        {.}                             
        { }                             
        {\thmname{#1}\thmnote{ \bfseries #3}}
    \theoremstyle{PropNum}
\newtheorem{propn}{Proposition}
\def\N{\mathds{N}}
\def\Z{\mathds{Z}}
\def\Q{\mathds{Q}}
\def\R{\mathds{R}}
\def\F{\mathds{F}}
\def\m{\mathfrak{m}}
\def\O{\mathcal{O}}
\def\A{\mathcal{A}}
\def\G{\mathcal{G}}
\def\tr{\textnormal{tr}\,}
\def\v{\upsilon}
\newcommand\ceil[1]{\lceil #1 \rceil}
\newcommand\floor[1]{\lfloor #1 \rfloor}
\newcommand\frob[1]{\textnormal{Frob}_{#1}}
\newcommand\limd[1]{\ensuremath{\lim_{#1 \rightarrow \mid \infty}}}
\newcommand\ctext[1]{#1}
\renewcommand\d[1]{\ensuremath {\,\mathrm{d}#1}}
\renewcommand\min[2]{\textnormal{min}\{#1,#2\}}
\newcommand{\Addresses}{{
  \bigskip
  \footnotesize
  \textsc{{\bf KU Leuven} \\ Department of Mathemetics \\ Celestijnenlaan 200 B \\ B-3001 Heverlee\\ BELGIUM  } \vspace{8 pt} \\
  \textsc{{\bf Universit\'e du Luxembourg} \\ Mathematics Research Unit FSTC \\ 6, rue Richard Coudenhove-Kalergi \\ L-1359 Luxembourg \\ LUXEMBOURG  } \vspace{8 pt} \\
  \texttt{jasper.vanhirtum@wis.kuleuven.be}
}}
\author{Jasper Van Hirtum}
\title{On the Distribution of Frobenius of Weight $2$  Eigenforms with   Quadratic Coefficient Field}
\begin{document}
\maketitle
\begin{abstract}
	In this article we present a heuristic model that describes the asymptotic behaviour of the number of primes $p$ such that the $p$-th coefficient of a given  eigenform is a rational integer. We treat the case of a weight $2$ eigenform with quadratic coefficient field without inner twists. Moreover we present numerical data which agrees with our model and the assumptions we made to obtain it.
\end{abstract}
\section{Introduction}

Let $f$ be a weight $2$ cuspidal Hecke eigenform of level $\Gamma_1(N)$ with quadratic coefficient field and without inner twist.  Denote the $p$-th coefficient of the standard $q$-expansion of $f$ by $a_p(f)$. Then the set of primes $\{p\  |\, a_p(f) \in \Q\}$ is known to be of  density zero, cf.~\cite[Corollary~1.1]{KSW08}. Part of the conjecture that Kumar  Murty posed based on earlier work of S. Lang and H. Trotter  \cite{LaTr} is the following.
\begin{conjecture}[Conjecture 3.4 \cite{Mur99}]\label{conj:Murty}
	Let  $f$ be a weight $2$ normalized cuspidal Hecke eigenform of level $\Gamma_1(N)$ with quadratic coefficient field and without inner twists. Then
	$$\#\{ p<x \textnormal{ prime} \mid a_p(f) \in \Q \} \sim c_f\frac{\sqrt x}{\log x},$$
	with $c_f$ a constant that depends on the eigenform. 
\end{conjecture}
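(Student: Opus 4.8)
Since Conjecture~\ref{conj:Murty} is of Lang--Trotter type, I do not expect a rigorous proof with present tools; the plan is instead to \emph{derive} the asymptotic, and with it the constant $c_f$, from a probabilistic model built on the large-image and equidistribution theorems available for $f$. First I would fix the geometry. Write $K$ for the coefficient field of $f$; it is necessarily real quadratic, since an imaginary quadratic $K$ would make complex conjugation a nontrivial automorphism of $K$ and force an inner twist. Let $d_K$ be its discriminant, $\O_K$ its ring of integers, and $\iota_1=\mathrm{id},\iota_2=\sigma\colon K\hookrightarrow\R$ the two real embeddings. For $p\nmid N$ we have $a_p(f)=\tr\rho_{f,\lambda}(\frob p)\in\O_K$, the Hasse bound gives $|\iota_j(a_p(f))|\le 2\sqrt p$, and $a_p(f)\in\Q\iff\iota_1(a_p(f))=\iota_2(a_p(f))$. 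Identifying $\O_K$ with a lattice of covolume $\sqrt{d_K}$ in $\R^2$ via $(\iota_1,\iota_2)$, the problem turns into counting the $p<x$ for which the lattice point $\bigl(\iota_1(a_p(f)),\iota_2(a_p(f))\bigr)$ of the ``Hasse box'' $B_p=[-2\sqrt p,2\sqrt p]^2$ lands on the diagonal $\Delta=\{(t,t)\}$. As $B_p$ contains $\asymp p/\sqrt{d_K}$ points of $\O_K$, of which $\asymp\sqrt p$ lie on $\Delta$, one already sees the heuristic probability $\asymp\sqrt{d_K}/\sqrt p$ and hence the order $\sqrt x/\log x$ after summing over $p$.

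The model would rest on two inputs. For the finite places: since $f$ has weight $2$ and no inner twists, the image of $G_\Q$ in $\mathrm{GL}_2(\O_K\otimes\Z_\ell)$ is open with determinant the cyclotomic character (Ribet, Momose), so at split $\ell$ the two $\lambda$-adic representations are independent up to finite index; by the Chebotarev density theorem $\frob p$ equidistributes in every finite quotient, and the reduction of $a_p(f)$ modulo powers of $\ell$ then follows an explicit law, recorded by a local factor $\mu_\ell\in\Q_{>0}$ measuring its discrepancy from uniform, with $\mu_\ell=1$ for all but finitely many $\ell$ and $\mu_{\mathrm{fin}}:=\prod_\ell\mu_\ell$ convergent. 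For the archimedean place: for $f$ non-CM of weight $2$ without inner twists the pair $\bigl(\tfrac{\iota_1(a_p(f))}{2\sqrt p},\tfrac{\iota_2(a_p(f))}{2\sqrt p}\bigr)$ equidistributes in $[-1,1]^2$ for $\mu_{\mathrm{ST}}\otimes\mu_{\mathrm{ST}}$, where $\d{\mu_{\mathrm{ST}}}(t)=\tfrac2\pi\sqrt{1-t^2}\,\d t$; write $\rho_\infty(s,t)=\tfrac4{\pi^2}\sqrt{(1-s^2)(1-t^2)}$ for the corresponding Lebesgue density on $[-1,1]^2$. The model then \emph{assumes} that these two mechanisms are independent and that the archimedean equidistribution persists at the lattice scale $1/\sqrt p$, so that for a fixed $P\in\O_K\cap B_p$ one may posit $\Pr[a_p(f)=P]\approx\sqrt{d_K}\cdot\tfrac1{4p}\,\rho_\infty(P/2\sqrt p)\cdot\mu_{\mathrm{loc}}(P)$.

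To assemble the count I would sum this over the diagonal points $P=(a,a)$, $a\in\Z$, $|a|\le 2\sqrt p$, obtaining
\[
\Pr\bigl[a_p(f)\in\Q\bigr]\;\approx\;\frac{\sqrt{d_K}\,\mu_{\mathrm{fin}}}{\pi^2 p}\sum_{a\in\Z,\ |a|\le 2\sqrt p}\Bigl(1-\frac{a^2}{4p}\Bigr),
\]
and then replace the sum over $a$ by $2\sqrt p\int_{-1}^1(1-t^2)\,\d t=\tfrac83\sqrt p$ (substitute $t=a/2\sqrt p$), which gives $\Pr[a_p(f)\in\Q]\sim\tfrac{8\sqrt{d_K}}{3\pi^2}\mu_{\mathrm{fin}}\,p^{-1/2}$. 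Finally $\#\{p<x\textnormal{ prime}\mid a_p(f)\in\Q\}\approx\sum_{p<x}\Pr[a_p(f)\in\Q]$, and partial summation with $\sum_{p<x}p^{-1/2}\sim 2\sqrt x/\log x$ (prime number theorem) produces the asymptotic $c_f\,\sqrt x/\log x$ with $c_f=\tfrac{16\sqrt{d_K}}{3\pi^2}\mu_{\mathrm{fin}}$: an explicit constant coming from the joint Sato--Tate measure restricted to the diagonal, times a convergent Euler product of Chebotarev local factors. The precise normalisation of $\mu_{\mathrm{fin}}$ and the contribution of the finitely many ramified or exceptional $\ell$ would have to be worked out separately.

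The main obstacle will be that \emph{no step of this derivation can be made rigorous at the precision required}. Unconditional effective Chebotarev is far too weak to bound the error; we know only, by \cite[Corollary~1.1]{KSW08}, that the count is $o(\pi(x))$, and even under GRH the Murty--Murty--Saradha and Serre methods give only upper bounds of the shape $x^{1-\delta}$ for some small $\delta>0$, still much larger than $\sqrt x$, while a lower bound of the correct order is completely out of reach. Beyond this, the model silently uses two unproven principles: independence of the Sato--Tate angle of $\frob p$ from its reductions modulo the primes $\ell$, and a quantitative joint Sato--Tate statement valid on diagonal neighbourhoods shrinking like $1/\sqrt p$, a sparse-equidistribution assertion well beyond current methods. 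The realistic programme is therefore the one carried out in this paper: establish the heuristic asymptotic, compute $c_f$, and corroborate both numerically.
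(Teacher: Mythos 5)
You are right that no proof of this statement exists --- it is Murty's conjecture, and the paper does not prove it either; its contribution is exactly the programme you describe: a heuristic derivation of the asymptotic together with an explicit constant, corroborated numerically. Your model is in substance the same Lang--Trotter-type heuristic, and your archimedean computation agrees with the paper's: the diagonal restriction of the product Sato--Tate density gives $\int_{-1}^{1}\frac{4}{\pi^{2}}(1-t^{2})\,\mathrm{d}t=\frac{16}{3\pi^{2}}$, which is the paper's Lemma \ref{lemma:DensT} ($\delta(T_\varepsilon)=\frac{32}{3\pi^2}\varepsilon+o(\varepsilon)$, the factor $2$ being the width of the strip $|s-t|<\varepsilon$) and Corollary \ref{cor:PmCor}. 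The routes differ in how the heuristic is organised. You posit a pointwise probability for each lattice point of $\O_f$ in the Hasse box and sum over the diagonal, bundling all non-archimedean information into a loosely specified Euler product $\mu_{\mathrm{fin}}$. The paper instead isolates the quantity $Z_p=a_p(f)-\overline{a_p(f)}$ and formalises the independence hypothesis as a statement about counting functions, $P^m(x)\cdot P_m(x)/P(x)\to 1$ in a double limit taken by divisibility in $m$ (Assumptions \ref{ass:ind}, \ref{ass:ind1}, \ref{ass:PmAss}); the archimedean factor is then derived via the Shimura abelian surface $\A_f$, its Galois type and the Fit\'e--Kedlaya--Rotger--Sutherland classification (Sato--Tate group $SU(2)\times SU(2)$, with the generalized Sato--Tate conjecture known in this case), and the finite factor $\widehat F=\lim_m m\,\#\G_m^t/\#\G_m$ is computed explicitly from Ribet's open-image theorem, with closed formulas for $\widehat F_\ell$ at split and inert primes and a convergence proof for the product. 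What the paper's formulation buys is a precise, checkable set of assumptions (in particular, Assumption \ref{ass:ind1} for a single $m$ already yields the asymptotic, and Lemma \ref{lemma:MainResult}/Corollary \ref{cor:MainResult} show how the constant degrades to $\widehat F/\alpha$ if the double limit is not $1$) and an explicitly computable constant; your formulation is closer to the original Lang--Trotter model and makes the geometric origin of the $\sqrt{x}/\log x$ shape more transparent.

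One concrete caveat: your constant carries $\sqrt{d_K}$ (covolume of $\O_f$), whereas the paper's carries $\sqrt{D}$ with $D$ squarefree; these differ by a factor $2$ when $D\equiv 2,3 \bmod 4$ (e.g.\ $\Q(\sqrt 2)$). This is not an error per se, but it means your $\mu_{\mathrm{fin}}$ and the paper's $\widehat F$ are normalised against different baselines (uniform per $\O_f$-lattice point versus $Z_p\equiv 0 \bmod m\sqrt D\Z$ having naive probability $1/m$), and the discrepancy must be absorbed into the local factors --- precisely the normalisation you defer. Since the adelic image lies in $\{\sigma\in GL_2(\widehat\O)\mid \det\sigma\in\widehat\Z^\times\}$ rather than all of $GL_2(\widehat\O)$, the ``uniform'' baseline is genuinely wrong at every prime, so pinning down this normalisation is where the real content of the finite factor lives; the paper's Sections \ref{section:FinPla} and appendices do exactly this computation.
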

In this paper we present a heuristic model that makes this conjecture explicit. More precisely we will prove the following theorem.
\begin{thmn}[\ref*{theorem:MainResult}]
 \ctext{Let  $f$ be a weight $2$ normalized cuspidal Hecke eigenform of level $\Gamma_1(N)$ with quadratic coefficient field $\Q(\sqrt D)$ and without inner twists. Assume  that there exists a positive integer $m_0$ such that  Assumptions  \ref{ass:PmAss} and \ref{ass:ind}  hold for $f$ and all positive integers in $m_0\Z$}. Then there is an explicit constant $\widehat F$, depending on the images of the Galois representations attached to $f$, such that Conjecture \ref{conj:Murty} holds with 
$$c_f = \frac{16 \sqrt D \widehat F} {3\pi ^{2}}.$$
\end{thmn}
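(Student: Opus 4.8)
\medskip

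The strategy is to establish Conjecture~\ref{conj:Murty} through an explicit heuristic model and to read off $c_f$ from it. Since $a_p(f)$ is an algebraic integer, $a_p(f)\in\Q$ is equivalent to $a_p(f)\in\Z$, i.e.\ to $\sigma_1(a_p(f))=\sigma_2(a_p(f))$, where $\sigma_1,\sigma_2\colon\Q(\sqrt D)\hookrightarrow\R$ are the two embeddings. I would model the sequence $(a_p(f))_p$ probabilistically, regarding $a_p(f)$ as a ``random'' element of the ring of integers $\O$ of $\Q(\sqrt D)$, and compute the expected value of $\#\{p<x\mid a_p(f)\in\Z\}$, which under the stated hypotheses will be the true asymptotics. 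By Assumption~\ref{ass:ind} the law of $a_p(f)$ factors into an archimedean part (a joint Sato--Tate distribution for the two conjugates) and, at each prime power, an $\ell$-adic part governed by the image of the Galois representation $\rho_{f,\ell^n}$; hence the expected count is $\sum_{p<x}\Pr[\,a_p(f)\in\Z\,]$, with $\Pr[\,a_p(f)\in\Z\,]$ itself a product of an archimedean factor and $\ell$-adic factors.

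First I would determine the archimedean factor. The conjugates $\sigma_i(a_p(f))$ lie in $[-2\sqrt p,2\sqrt p]$, and by the assumed independence of the two Sato--Tate laws the point $\bigl(\sigma_1(a_p(f))/\sqrt p,\ \sigma_2(a_p(f))/\sqrt p\bigr)$ is equidistributed on $[-2,2]^2$ with density $g(u_1)g(u_2)$, $g(u)=\tfrac1{2\pi}\sqrt{4-u^2}$. The rational integers of absolute value $<2\sqrt p$ are precisely the points of $\O$ on the diagonal $\{\sigma_1=\sigma_2\}$; weighting them by the Sato--Tate density (a Riemann sum of mesh $1/\sqrt p$ in the normalized coordinate) shows the archimedean contribution to $\Pr[\,a_p(f)\in\Z\,]$ to be asymptotic to
\[
\operatorname{covol}(\O)\cdot\frac1p\sum_{|n|<2\sqrt p} g\!\left(\tfrac{n}{\sqrt p}\right)^{\!2}\ \sim\ \frac{\operatorname{covol}(\O)}{\sqrt p}\int_{-2}^{2} g(u)^2\d{u}\ =\ \frac{\sqrt D}{\sqrt p}\cdot\frac{8}{3\pi^2},
\]
using $\int_{-2}^{2}\tfrac1{4\pi^2}(4-u^2)\d{u}=\tfrac{8}{3\pi^2}$ and $\operatorname{covol}(\O)=\sqrt D$. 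Next I would assemble the $\ell$-adic factors: by Assumption~\ref{ass:PmAss} (the predicted equidistribution of $\operatorname{Frob}_p\bmod m$ in $\operatorname{Im}\rho_{f,m}$), combined with the equidistribution of $p$ in $(\Z/m\Z)^{\times}$, the congruence corrections imposed by $\operatorname{Im}\rho_{f,\ell^n}$ and by $\det\operatorname{Frob}_p=p$ combine into an Euler product $\widehat F=\prod_\ell \widehat F_\ell$, the limit being taken over $m\in m_0\Z$. Convergence follows because, for a form without inner twists, $\operatorname{Im}\rho_{f,\ell^n}$ is maximal for all but finitely many $\ell$ (Ribet, Momose), so that $\widehat F_\ell=1+O(\ell^{-2})$.

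Combining the factors gives $\Pr[\,a_p(f)\in\Z\,]\sim \frac{8\,\widehat F\,\sqrt D}{3\pi^2}\,p^{-1/2}$, and summing over primes with partial summation and the prime number theorem ($\sum_{p<x}p^{-1/2}\sim 2\sqrt x/\log x$),
\[
\#\{\,p<x\text{ prime}\mid a_p(f)\in\Q\,\}\ \sim\ \frac{8\,\widehat F\,\sqrt D}{3\pi^2}\cdot\frac{2\sqrt x}{\log x}\ =\ \frac{16\,\sqrt D\,\widehat F}{3\pi^2}\cdot\frac{\sqrt x}{\log x},
\]
which is Conjecture~\ref{conj:Murty} with $c_f=\frac{16\sqrt D\,\widehat F}{3\pi^2}$.

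The computational part --- the Sato--Tate integral, the covolume, the prime sum, the convergence of $\widehat F$ --- is routine; the main obstacle is justifying the probabilistic model, namely the independence of the archimedean and $\ell$-adic data and the equidistribution of $\operatorname{Frob}_p\bmod m$. These are exactly what Assumptions~\ref{ass:ind} and~\ref{ass:PmAss} encode, and I would take them as hypotheses, since proving them would require joint and ``vertical'' Sato--Tate results beyond current reach. The remaining care is in making the passage from the finite-level densities to the Euler product uniform (controlling the tail $\prod_{\ell>T}\widehat F_\ell$ and the level-$m$ truncation error) and in fixing the normalization of $D$ (fundamental discriminant versus squarefree part), any factor of $2$ being absorbed into the local factor at $2$.
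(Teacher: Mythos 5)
Your overall route is the same Lang--Trotter-style heuristic the paper uses: split the event $a_p(f)\in\Q$ into an archimedean factor governed by the joint ($SU(2)\times SU(2)$, i.e.\ product-of-semicircle) Sato--Tate law and an $\ell$-adic factor governed by the images $\G_m$, multiply, and sum $p^{-1/2}$ over primes; your constants ($\tfrac{8}{3\pi^2}$ for the archimedean integral, $\widehat F=\prod_\ell\widehat F_\ell$ with $\widehat F_\ell=1+O(\ell^{-2})$ by large image, $\sum_{p<x}p^{-1/2}\sim 2\sqrt x/\log x$) all agree with the paper's. However, the way you wire the hypotheses into the argument is off, and this is where the actual proof content lies. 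First, the mod-$m$ factor needs no assumption at all: it is Lemma \ref{lemma:chebdens}, i.e.\ unconditional Chebotarev, giving $P_m(x)\sim F_m/m$ with $F_m=m\,\#\G_m^t/\#\G_m$. You instead spend ``Assumption \ref{ass:PmAss}'' on this, whereas the real content of Assumption \ref{ass:PmAss} is archimedean: your Riemann-sum step evaluates the Sato--Tate density on windows of width $\sim p^{-1/2}$ (indeed on individual lattice points), and equidistribution alone says nothing at that shrinking scale; the licence for that step is precisely the ``good on average'' statement $P^m(x)\sim\frac1{\pi(x)}\sum_p\delta(T_{m\sqrt D/(4\sqrt p)})$ of Assumption \ref{ass:PmAss}. (Also, the joint Sato--Tate law you ``assume'' is in fact proven for this Galois type, via Johansson and the Fit\'e--Kedlaya--Rotger--Sutherland classification, as in Corollary \ref{cor:PmInf}.) As written, the key heuristic input for the archimedean factor is used implicitly without being covered by any stated hypothesis.

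Second, the step that turns the model into the stated asymptotic with constant exactly $\widehat F$ is asserted rather than derived. The paper's proof is exactly this bookkeeping: for each fixed $m$ one gets $N_f(x)\sim\frac{F_m}{\alpha_m}\,\frac{16\sqrt D}{3\pi^2}\,\frac{\sqrt x}{\log x}$ (Lemma \ref{lemma:MainResult}), the independence of $F_m/\alpha_m$ from $m$ lets one pass to the limit by divisibility, and Assumption \ref{ass:ind} is used precisely to force $\alpha=\lim_m\alpha_m=1$, so that the constant is $\widehat F$ and not $\widehat F/\alpha$ (Corollary \ref{cor:MainResult} and Theorem \ref{theorem:MainResult}). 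Your phrase about ``controlling the level-$m$ truncation error'' gestures at this, but the double-limit argument is the theorem's proof, not an afterthought. Finally, a small but real normalization point: $\operatorname{covol}(\O_f)=\sqrt D$ only when $D\equiv 1\pmod 4$ (otherwise $2\sqrt D$); the discrepancy is indeed absorbed at $\ell=2$, but only because the paper normalizes the local factors against the lattice $\sqrt D\,\Z$ for $Z_p$ (i.e.\ $F_m=m\,\#\G_m^t/\#\G_m$), so if you use the covolume heuristic you must fix the local normalization consistently rather than leave it as a remark.
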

Our work is based on the methods used by Serge Lang and Hale Trotter in  \cite{LaTr} where they derive a heuristic model for the behaviour of the of the coefficients of the $L$-polynomial of an elliptic curve, i.e., the coefficients of a weight $2$ eigenform with rational coefficients. 

Section \ref{sect:Prel} contains some facts concerning modular forms. In Section \ref{sect:Model} we describe the assumptions  needed to reduce our problem to the product of two functions: one concerning the real absolute value  and the other derived from the non-archimedean places. In   Section \ref{section:suth} we will discuss the factor of the infinite place. \ctext{For this factor we will use recent results on the Sato-Tate conjecture for abelian surfaces and one additional assumption}. The factor at the finite places will be discussed in Section \ref{section:FinPla}. We will derive this factor from the adelic representation attached to the eigenform. Section \ref{section:MainResult} contains the proof of our main result. In the final section we compare our model to numerical data. Moreover we check all assumptions and intermediate results numerically. All computations agree with our model and the assumptions we made to obtain these results. Therefore we are led to believe our heuristic model correctly predicts \ctext{the asymptotic number} of primes with rational integer coefficient.

\begin{remark}
Let $f$ be a  cuspidal Hecke eigenform of level $\Gamma_1(N)$.
\begin{enumerate}
\item If $f$ has CM by the  Dirichlet character $\chi$, then  $\chi(p) a_p (f) = a_p(f)$ for almost all primes. If $p$ is a prime such that $\chi(p) \neq 1,0$ then $a_p(f) = 0$.  By Dirichlet's theorem of arithmetic progression the density of the set $\{p \textnormal{ prime } | a_p(f) =0\}$ is at least $\frac{1}{2}$. 
\item Suppose that  $f$ has quadratic coefficient field $K_f$ and an inner twist by the Dirichlet character $\chi$  and non-trivial automorphism $\sigma \in Gal(K_f/\Q)$. Let $p$ be a prime such that $\chi(p) = 1$ then 
$\sigma (a_p(f)) = \chi(p) \cdot a_p(f) = a_p(f)$ so $a_p(f) \in \Q$. If $n$ is the modulus of the character $\chi$ and $p$ is a prime such that $p \equiv 1 \mod n$, then $\chi(p) = 1$ and $a_p(f) \in \Q$. Again by Dirichlet's theorem of arithmetic progression the density of the set $\{p \textnormal{ prime } | a_p(f) \in \Q\}$ is at least $\frac{1}{\phi(n)}$.
\item Let  $f$ be a weight $k$ form without inner twists and let $d$ be the extension \ctext{degree of  $K_f$} over $\Q$. Then Kumar Murty conjectured the following \cite[Conjecture~3.4]{Mur99}
$$  \#\{p<x\textnormal{ prime } \mid a_p(f) \in \Q \} \sim c_f \begin{cases} \sqrt x / \log x & \textnormal{if } k = d= 2, \\ \log \log x & \textnormal{if } k=2 \textnormal{ and } d = 3, \\ &\textnormal{or } k = 3 \textnormal{ and } d = 2,\\ 1 &\textnormal{else}. \end{cases}$$
\end{enumerate}
\end{remark}

\subsection*{Acknowledgements}
I wish to express my sincere gratitude to Jan Tuitman and Gabor Wiese for suggesting me this problem, for our discussions, for  their enthusiasm  and for their  guidance.

\ctext{I would also like to thank Andrew Sutherland and the anonymous referee for  useful comments}.

\section{Preliminaries}\label{sect:Prel}
In this section we some  recall basic facts concerning modular forms.
\begin{lemma}\label{lemma:prel0}
Let $f$ be a weight $2$ eigenform of level $\Gamma_1(N)$ and trivial nebentypus. If $N$ is square-free, then $f$ does not have inner twists.
\end{lemma}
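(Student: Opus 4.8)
The plan is to use the standard dictionary between inner twists of a newform and nontrivial automorphisms $\sigma$ of the coefficient field $K_f$ together with Dirichlet characters $\chi_\sigma$ satisfying $\sigma(a_p(f)) = \chi_\sigma(p)\, a_p(f)$ for almost all $p$. The first step is to recall that if $f$ has such an inner twist $(\sigma,\chi)$, then comparing the nebentypus of $f$ with that of the twisted form $f\otimes\chi$ forces a relation of the shape $\varepsilon_f = \varepsilon_f^{\sigma}\cdot \chi^{2}$ (up to the usual identification via $\sigma$), and moreover the conductor of $\chi$ must divide $N$; indeed the newform $f\otimes\chi$ has level dividing $\mathrm{lcm}(N,\mathrm{cond}(\chi)^2,\mathrm{cond}(\chi)\,\mathrm{cond}(\varepsilon_f))$, and matching it against $f^\sigma$, which has level exactly $N$, pins down $\mathrm{cond}(\chi)^2 \mid N$. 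This is where square-freeness of $N$ enters decisively.

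Next I would exploit the hypothesis of trivial nebentypus: with $\varepsilon_f = 1$ the relation above becomes $\chi^2 = 1$, so $\chi$ is a quadratic character, hence $\mathrm{cond}(\chi)$ is squarefree, and combined with $\mathrm{cond}(\chi)^2 \mid N$ and $N$ squarefree this forces $\mathrm{cond}(\chi) = 1$, i.e. $\chi$ is trivial. But an inner twist by the trivial character says $\sigma(a_p(f)) = a_p(f)$ for almost all $p$, and since the $a_p(f)$ generate $K_f$ over $\Q$ this means $\sigma = \mathrm{id}$. Therefore $f$ has no nontrivial inner twist, which is the assertion.

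The main technical point — and the step I would be most careful about — is the conductor bound $\mathrm{cond}(\chi)^2 \mid N$. One clean way to obtain it is to pass to the associated mod-$\ell$ or $\ell$-adic Galois representations $\rho_f$ and $\rho_f^\sigma \cong \rho_f \otimes \chi$: comparing Artin conductors away from $\ell$ gives $N = \mathrm{cond}(\rho_f) = \mathrm{cond}(\rho_f\otimes\chi)$, and a local computation at each prime $q \mid \mathrm{cond}(\chi)$ shows that twisting a two-dimensional representation by a character ramified at $q$ either leaves the conductor exponent unchanged (only if $\rho_f$ is already suitably ramified at $q$, which for squarefree $N$ means the exponent is exactly $1$ and the local representation is Steinberg, a case where twisting by a ramified $\chi$ strictly increases the exponent) or increases it, so the equality of conductors is impossible unless $\chi$ is unramified everywhere. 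I would present this local analysis prime-by-prime: at a prime $q$ with $v_q(N) \le 1$, the local representation $\rho_f|_{D_q}$ is either unramified or a ramified principal series/Steinberg with conductor exponent $1$, and in neither case can a twist by a character of conductor $q$ preserve the exponent $v_q(N)\le 1$. Summing the contradiction over all such $q$ yields $\mathrm{cond}(\chi)=1$ directly, which is in fact a self-contained route that does not even need the trivial-nebentypus hypothesis separately — though invoking trivial nebentypus as above is the shortest exposition.
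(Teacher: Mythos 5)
Your argument is essentially correct, but it takes a different route from the paper: the paper disposes of this lemma with a one-line citation to Ribet's Theorem~3.9~bis in~[Ri80], whereas you sketch a direct proof of the relevant special case by comparing conductors of $\rho_f$ and $\rho_f\otimes\chi$ locally at each prime dividing $\mathrm{cond}(\chi)$. That local analysis is indeed the heart of the matter, and under the hypotheses of the lemma it works: at a prime $q$ with $v_q(N)\le 1$ and nebentypus trivial, the local component is unramified or special (Steinberg up to unramified twist), and twisting by a character ramified at $q$ raises the conductor exponent to at least $2$, contradicting $v_q(N)\le 1$; hence $\chi$ is unramified everywhere, so trivial, so $\sigma(a_p(f))=a_p(f)$ for almost all $p$ and $\sigma=\mathrm{id}$. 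What your route buys is a self-contained argument in place of a black-box citation; what the citation buys is that Ribet's theorem already packages exactly this local bookkeeping (and more general level/nebentypus configurations) correctly.

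Three caveats on your write-up. First, the step ``$\chi^2=1$, hence $\mathrm{cond}(\chi)$ is squarefree'' is false at $2$ (quadratic characters of conductor $4$ or $8$ exist); it is also unnecessary, since $\mathrm{cond}(\chi)^2\mid N$ with $N$ squarefree already forces $\mathrm{cond}(\chi)=1$. Second, the bound $\mathrm{cond}(\chi)^2\mid N$ cannot be read off from the $\mathrm{lcm}$ formula alone, because the newform underlying $f\otimes\chi$ may have level strictly smaller than $\mathrm{lcm}(N,\mathrm{cond}(\chi)^2,\mathrm{cond}(\chi)\mathrm{cond}(\varepsilon_f))$; your prime-by-prime conductor comparison is the correct replacement, so the first paragraph should be regarded as motivation only. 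Third, the closing claim that the local route ``does not even need the trivial-nebentypus hypothesis'' overreaches: if the nebentypus is ramified at a prime $q$ with $v_q(N)=1$, the local component is a ramified principal series $\pi(\mu_1,\mu_2)$ with $\mu_1$ unramified and $\mu_2$ of conductor $q$, and twisting by $\mu_2^{-1}$ preserves the conductor exponent $1$; this is precisely why trivial (or at least locally unramified) nebentypus is part of the hypothesis, and such squarefree-level forms with nontrivial nebentypus can in fact have inner twists.
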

\begin{proof}
This follows from \cite[Theorem~3.9~bis]{Ri80}.
\end{proof}
\begin{lemma}\label{lemma:prel}
	Let $f$ be a  normalized cuspidal Hecke eigenform of level $\Gamma_1(N)$. 
	\begin{enumerate}
		\item If $f$ has trivial nebentypus, then  the coefficient field of $f$ is totally real.
		\item If $f$ does not have any inner twist, then $f$ has trivial nebentypus.
	\end{enumerate}
\end{lemma}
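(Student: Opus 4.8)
The plan is to derive both statements from a single computation --- the behaviour of the good Hecke operators under the Petersson inner product --- together with the $\mathrm{Aut}(\C)$--action on newforms. Write $k$ for the weight of $f$, $\varepsilon$ for its nebentypus, and $K_f$ for its coefficient field. First I would record the key identity
\[
\overline{a_p(f)} \;=\; \overline{\varepsilon(p)}\, a_p(f)\qquad\text{for every prime }p\nmid N .
\]
To prove it, recall that for $p\nmid N$ the Petersson adjoint of $T_p$ acting on $S_k(\Gamma_0(N),\varepsilon)$ is $\overline{\varepsilon(p)}\,T_p$ (the usual double-coset manipulation; the sign conventions are fixed by the classical case $\varepsilon=\mathbf 1$, in which $T_p$ is self-adjoint), so evaluating $\langle T_p f, f\rangle$ directly gives $a_p(f)\langle f,f\rangle$ while evaluating it through the adjoint gives $\varepsilon(p)\,\overline{a_p(f)}\,\langle f,f\rangle$; since $\langle f,f\rangle\neq 0$ the identity follows.

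For part (1) I would argue as follows. If $\varepsilon$ is trivial the identity reads $\overline{a_p(f)}=a_p(f)$, so $a_p(f)\in\R$ for all $p\nmid N$. Each $\mathrm{Aut}(\C)$--conjugate $f^\sigma=\sum_n\sigma(a_n(f))q^n$ is again a normalized newform of level $\Gamma_1(N)$ and weight $k$, with nebentypus $\sigma\circ\varepsilon$, which is again trivial; applying the identity to $f^\sigma$ gives $\sigma(a_p(f))=a_p(f^\sigma)\in\R$ for every $\sigma$ and every $p\nmid N$. Since $K_f$ is generated over $\Q$ by $\{a_p(f):p\nmid N\}$ (strong multiplicity one: any automorphism of $\C$ fixing all these values fixes $f$, hence fixes $K_f$), every embedding $K_f\hookrightarrow\C$ has image in $\R$, i.e. $K_f$ is totally real. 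Here one also uses the elementary fact that a (possibly infinite) compositum of totally real subfields of $\C$ is totally real.

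For part (2), let $\iota$ denote complex conjugation. The key identity says exactly that $\iota(a_p(f))=\overline\varepsilon(p)\,a_p(f)$ for almost all $p$, i.e. that the pair $(\iota,\overline\varepsilon)$ is an inner twist of $f$. Since $f$ has no inner twists this pair must be the trivial inner twist; in particular $\overline\varepsilon$ is the trivial Dirichlet character, hence so is $\varepsilon$.

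The mathematics is all standard, so the hard part is really bookkeeping around the definition of ``inner twist'': one needs an inner twist whose automorphism part is the identity to be forced to be trivial. With the usual convention --- inner twists form a group with identity $(\mathrm{id},\mathbf 1)$, so ``no inner twists'' excludes CM forms as well --- this is automatic and the argument above for (2) is complete. If one instead only forbids inner twists with \emph{non-trivial} automorphism, then one must additionally assume $f$ is non-CM and invoke Serre's theorem that $\{p:a_p(f)=0\}$ has density zero for such $f$: first $\iota|_{K_f}=\mathrm{id}$ (otherwise $(\iota,\overline\varepsilon)$ would be an inner twist with non-trivial automorphism), so the key identity becomes $a_p(f)=\overline\varepsilon(p)a_p(f)$, which forces $\overline\varepsilon(p)=1$ on a density-one set of primes and hence $\varepsilon=\mathbf 1$. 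Finally, note that the Ramanujan--Petersson bound is not needed anywhere above, though it does give an alternative derivation of the key identity via factoring the local Hecke polynomial $X^2-a_p(f)X+\varepsilon(p)p^{k-1}$.
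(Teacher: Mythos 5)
Your proposal is correct and follows essentially the same route as the paper: the key identity $\overline{a_p(f)}=\overline{\varepsilon(p)}\,a_p(f)$ for $p\nmid N$ from the Petersson adjointness of $T_p$ up to the nebentypus, then part (1) by specializing $\varepsilon=\mathbf 1$ and part (2) by recognizing the identity as an inner twist by $\varepsilon^{-1}$ via complex conjugation. In fact you are slightly more careful than the paper on two points: your $\mathrm{Aut}(\C)$-conjugation step genuinely upgrades $K_f\subset\R$ to ``totally real'' (the paper stops at $K_f\subset\R$), and your discussion of the inner-twist convention (trivial automorphism with non-trivial character, i.e. the CM-type case) addresses a subtlety the paper's one-line argument for (2) passes over.
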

\begin{proof}
	Let $K_f$ be the coefficient field of $f$, $N$ the level  and $\varepsilon$ the nebentypus of $f$. Let $<\cdot, \cdot>$ be the Petersson scalar product. The Hecke operators are self adjoint with respect to $<\cdot,\cdot>$ up to the character $\varepsilon$ \cite[Theorem 5.1]{Lang76}, i.e.,
	$$<T_p\,\cdot,\cdot> = \varepsilon(p) <\cdot,T_p\,\cdot>,$$
	for all primes $p$ not dividing $N$.
	Hence for any such prime $p$ we obtain
	\begin{align*}
		a_p(f)<f,f> &= <T_pf,f>\\
		&=\varepsilon(p)<f,T_pf> \\
		&= \varepsilon(p)\widehat{a_p(f)}<f,f>,
	\intertext{where $\widehat {a_p(f)}$  denotes the complex conjugate of $a_p(f)$.
	In particular }
		\varepsilon^{-1}(p) a_p(f)& = \widehat{a_p(f)},
	\end{align*}
	for all primes $p$ not dividing $N$.
	\begin{enumerate}
		\item If the nebentypus of $f$ is trivial, then by the above
		$$a_p(f) = \widehat{a_p(f)}$$
		for all primes $p$ not dividing $N$. In particular \ctext{$K_f\subset \R$}.
		\item Note that  $\widehat{\,\cdot\,}|_{K_f} \in \textnormal{Aut}_\Q(K_f)$ so if  $\varepsilon$ is not the  trivial character, then $f$ has  inner twist by $\varepsilon^{-1}$.
	\end{enumerate}
\end{proof}

\section{Heuristic model}\label{sect:Model}
For the remainder of this article $f$ will be a   normalized   cuspidal  Hecke eigenform of weight $2$ and level $\Gamma_1(N)$  without inner twist or CM and with quadratic coefficient field $K_f$. By  Lemma  \ref{lemma:prel} $K_f\subset \R$ and $f$ has trivial nebentypus. Let  $D$ be the positive square-free integer such that $K_f = \Q(\sqrt D)$. Denote by $\overline{\, \cdot \,}$ the unique non-trivial element of the Galois group of $K_f/\Q$. Define
$$Z_p := a_p(f) -\overline{a_p(f)}.$$
Note that $Z_p \in \sqrt D \Z$ since $a_p(f)$ is an algebraic integer in $K_f$. Moreover 
\begin{align*}
	a_p(f) \in \Q &\Leftrightarrow Z_p = 0 \\
	& \Leftrightarrow  \frac{-m\sqrt D} 2<Z_p < \frac{m \sqrt D} 2 \textnormal{ and } Z_p \equiv 0 \mod m\sqrt D \Z \ \textnormal{ for all } m \in \N .
\end{align*}
\ctext{In other words  the condition $a_p(f) \in \Q$ is equivalent to a condition on the real and $\ell$-adic absolute value of $Z_p$ for finite places $\ell$ dividing  $m$ for any positive integer $m$}.  Denote  $\pi(x) := \#\{p <x  \textnormal{ prime}\}$ and 
\begin{align*}
	P(x) &:= \frac{\#\{ p  <x \textnormal{ prime} \mid Z_p =0\}}{\pi(x)},\\
	P_m(x) & := \frac{\#\{ p <x \textnormal{ prime} \mid Z_p \equiv 0 \mod m\sqrt D \Z\}}{\pi(x)},\\
	P^m(x) & := \frac{\#\big\{ p <x \textnormal{ prime} \mid Z_p \in ]\frac{-m\sqrt D} 2,\frac{m\sqrt D} 2[\big\}}{\pi(x)}.
\end{align*}
Since $|a_p(f)| = \O(\sqrt p)$ cf. \cite[Lemma 2]{Lang76} 
$$\lim_{m\rightarrow \infty} P_m(x) = P(x) \textnormal{ and } \lim_{m\rightarrow \infty}P^m(x) = 1 $$
for all $x>2$. In particular
\begin{align*} &\lim_{m\rightarrow  \infty }  \frac{P^m(x) \cdot P_m(x)} {P(x)} = 1  \ \textnormal{ for all } x>2 \intertext{ so}
 &\lim_{x\rightarrow \infty}  \lim_{m\rightarrow  \infty }  \frac{P^m(x) \cdot P_m(x)} {P(x)} = 1.
\end{align*}
Our first assumption states that the order of the double  limit can be reversed.

\begin{assumption}\label{ass:ind}
	Let $f$ be as above. Then
	$$ \limd m \lim_{x\rightarrow \infty} \frac{P^{m}(x)\cdot P_{m}(x)}{P(x)} = 1,$$
	where $ \limd m $ denotes that the limit over $m$ is taken by divisibility.
\end{assumption}
We say that $a$ is the limit of a  series $\{a_m\}_\N$ by divisibility if for all $\varepsilon>0$ there exists a positive integer $m_0$ such that for all $m\in m_0 \N$
$$|a_m-a|<\varepsilon.$$

In Section \ref{section:MainResult} we will show that the convergence of the double limit in Assumption \ref{ass:ind} follows from the  weaker condition that there exists at least one positive integer $m$ satisfying the following assumption. 
\begin{assumption}\label{ass:ind1}
Let $f$ be as above and  $m$  a positive integer. Then   
$$\lim_{x\rightarrow \infty} \frac{P^{m}(x)\cdot P_{m}(x)}{P(x)}= \alpha_m, $$
with $0<\alpha_m<\infty$.
\end{assumption}
Note that the $0<\alpha_m$ part of the statement will follow immediately from Lemma \ref{lemma:chebdens}. Assumption \ref{ass:ind1} is enough to prove the asymptotic behaviour of $\#\{p < x \text{ prime} \mid a_p \in \Q\}$. However to make the constant $c_f$ explicit we will need the stronger Assumption \ref{ass:ind}.

By deriving suitable expressions for the arithmetic part $P_m(x)$ and the real part $P^m(x)$ respectively we will obtain the asymptotic behaviour of $P(x)$ predicted by Conjecture \ref{conj:Murty} from Assumption \ref{ass:ind1}. Additionally under the stronger condition of Assumption \ref{ass:ind} we will obtain an explicit constant. In Section \ref{section:FinPla} we use Chebotarev's density theorem  to prove an explicit formula for the factor $P_m(x)$. For the factor at the infinite place we will need  additional assumptions. We describe the assumptions and the results that follow in the next section.

\section{The place at infinity}\label{section:suth}
In this section we describe a heuristic formula  for the factor 
$$P^m(x) = \frac{\#\big\{ p <x \textnormal{ prime} \mid Z_p \in ]\frac{-m\sqrt D} 2,\frac{m\sqrt D} 2[\big\}}{\pi(x)},$$
\ctext{which we derive from natural assumptions}.  We use results from a recent paper by F. Fit\'e, K. Kedlaya, V. Rotger and A. Sutherland that describes the joint distribution of the coefficients of the normalized $L_p$-polynomial of hyperelliptic curves of genus $2$ under the  assumption of  the Sato-Tate conjecture for abelian varieties (cf.  \cite{FKRS}). Note that we  use  the Sato-Tate conjecture for abelian varieties rather than the proven   Sato-Tate distribution for modular forms. The latter describes the distribution of the real absolute value of the coefficients but claims nothing about the coefficients as  elements of the number field $K_f$. 

Let $f$ be as above. Then one can associate via  Shimura's construction (cf.~\cite[section 1.7]{DiSh}) an abelian variety $\A_f$ of dimension $[K_f:\Q]$ to the eigenform $f$. For every prime $p$ the $L_p$-polynomial associated to the variety $\A_f$ splits as the $L_p$-polynomial of the eigenform and its Galois conjugate over $K_f$. More precisely, let $L_p(T):=p^2T^4 +pX_pT^3 +Y_pT^2+X_pT+1 $ be the $L_p$-polynomial of $\A_f$ then
$$L_p(T) = (pT^2 - a_p(f)T+1)(pT^2-\overline{a_p(f)} T+1).$$
Hence 
$$a_p(f) = -\frac{X_p}{2} \pm \sqrt{2p-Y_p+\frac{X_p^2} 4}.$$
Note that from the $L_p$-polynomial of $\A_f$ we cannot deduce $a_p(f)$ completely. Indeed we  only obtain its  Galois orbit. However we can decide whether or not $Z_p$ lies in a symmetrical interval around zero since $$|Z_p| = \sqrt{8p-4Y_p+X^2_p}.$$

Let $a_{1,p}=\frac{X_p}{\sqrt p}$ and $a_{2,p} = \frac{Y_p}{p}$ be the coefficients of the normalized $L_p$-polynomial $L_p( T/\sqrt p)$. Then 
$$Z_p \in \Big]-\frac{m\sqrt D} 2,\frac{m\sqrt D} 2 \Big[  \Leftrightarrow \sqrt{2-a_{2,p}+a_{1,p}^2/4}< \frac{m\sqrt D }{4\sqrt p} .$$

The generalized Sato-Tate conjecture states that this distribution  is completely determined by the so called Sato-Tate group. In \cite{FKRS} Fit\'e et al. study the joint distribution of $(a_{1,p},a_{2,p})$ for abelian surfaces.   More precisely they prove the following theorem. 
\ctext{\begin{theorem}\label{theorem:FKRS}
Let $\A$ be an abelian surface. There exist exactly $52$  Sato-Tate groups for abelian surfaces, of which only $34$ occur over $\Q$. Moreover the conjugacy class of the Sato-Tate group of $\A$ is uniquely determined by its
 Galois type (cf.~\cite[Def.~1.3~]{FKRS}) and vice versa.
\end{theorem}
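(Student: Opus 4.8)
This is the main classification result of \cite{FKRS}, so the plan is to outline how that classification is organised. The Sato-Tate group $\mathrm{ST}(\A)$ is a compact real Lie subgroup of $\mathrm{USp}(4)$, well defined up to conjugacy, and --- granting the algebraic Sato-Tate formalism, which is available for abelian surfaces through work of Banaszak and Kedlaya --- it is the maximal compact subgroup of the algebraic Sato-Tate group, a reductive $\R$-group built from the Mumford-Tate group of $\A$ together with the action of $\mathrm{Gal}(\overline{\Q}/k)$ on the endomorphisms. The strategy is therefore to split $\mathrm{ST}(\A)$ into its identity component $\mathrm{ST}^0(\A)$ and the finite component group $\mathrm{ST}(\A)/\mathrm{ST}^0(\A)$, classify each, and then record the result in terms of the Galois type.

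First I would classify the possible identity components. Since $\mathrm{ST}^0(\A)$ is governed by the Hodge (Mumford-Tate) group of $\A$, which in turn is controlled by the endomorphism algebra $\mathrm{End}(\A_{\overline{\Q}})\otimes\Q$, I would run through Albert's classification of endomorphism algebras of abelian surfaces: $\Q$; a real quadratic field; an indefinite quaternion algebra over $\Q$; a quartic CM field; and the non-simple cases $\Q\times\Q$, $\Q\times K$, $K_1\times K_2$ (with $K$, $K_i$ imaginary quadratic), $M_2(\Q)$ and $M_2(K)$. Each case pins down $\mathrm{ST}^0(\A)$ up to conjugacy in $\mathrm{USp}(4)$, and one finds that exactly six connected groups occur: $\mathrm{USp}(4)$, $\mathrm{SU}(2)\times\mathrm{SU}(2)$, the diagonally embedded $\mathrm{SU}(2)$, $\mathrm{U}(1)\times\mathrm{SU}(2)$, $\mathrm{U}(1)\times\mathrm{U}(1)$ and the diagonally embedded $\mathrm{U}(1)$.

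Next, for each of these six identity components $G^0$ I would classify the finite groups that arise as $\mathrm{ST}(\A)/G^0$. Such a group must be a subgroup of the component group of the normaliser $N_{\mathrm{USp}(4)}(G^0)$, but not every such subgroup occurs: one must impose the Sato-Tate axioms, which encode both the compatibility of the component group with the Galois action on endomorphisms and a rationality condition on the characters of $\mathrm{ST}(\A)$. Following \cite{FKRS}, I would repackage this as the enumeration of \emph{Galois types}, that is, pairs consisting of a finite group $\mathrm{Gal}(L/k)$ together with its action on $\mathrm{End}(\A_L)\otimes\R$, subject to axioms abstracting what can occur for an abelian surface. Carrying out this enumeration up to equivalence yields $52$ Galois types, and exhibiting for each one a subgroup of $\mathrm{USp}(4)$ --- and checking that the resulting assignment is a bijection on conjugacy classes --- gives the asserted correspondence between Sato-Tate groups and Galois types. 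This case analysis is the combinatorial heart of the proof, and it is heaviest exactly when $G^0$ is small ($\mathrm{U}(1)$, $\mathrm{U}(1)\times\mathrm{U}(1)$, $\mathrm{U}(1)\times\mathrm{SU}(2)$), since then the normaliser is large and many component groups are a priori admissible.

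Finally I would determine which of the $52$ types are realised by an abelian surface defined over $\Q$. For the positive direction one exhibits explicit examples --- products of elliptic curves and their quadratic twists, Weil restrictions $\mathrm{Res}_{K/\Q}E$ of CM and non-CM elliptic curves, and Jacobians of genus-$2$ curves with prescribed automorphisms together with their twists --- which cover $34$ types. For the remaining $18$ one must prove an obstruction, showing that the required component group with its action on $G^0$ cannot be realised by a Galois extension of $\Q$ carrying the necessary cyclotomic or quaternionic data (several types force, for instance, a totally imaginary base field or a non-abelian extension whose ramification cannot be matched over $\Q$). The main obstacle is the middle step: an exhaustive and non-redundant enumeration of component groups and Galois types for the small identity components, where one must simultaneously avoid missing a subgroup of the relevant normaliser, correctly isolate which abstract finite groups with their actions satisfy the Sato-Tate axioms, and identify everything up to the correct notion of conjugacy. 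The next most delicate point is the rationality dichotomy --- in particular the negative assertions, which require genuine obstruction arguments rather than a failed search --- whereas the identity-component classification and the final matching with Galois types are comparatively formal once Albert's classification and the algebraic Sato-Tate formalism are in hand.
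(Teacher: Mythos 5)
Your proposal is correct in spirit, but note that the paper itself does not prove this statement at all: its entire proof is the citation \enquote{This is Theorem 1.4 in \cite{FKRS}}, so the theorem is imported as a black box. What you have written is instead a sketch of the internal argument of \cite{FKRS}, and as such it is a reasonably faithful outline: the identity component is pinned down by the real endomorphism algebra via Albert's classification (giving the six connected groups you list), the component groups are enumerated through the Sato-Tate axioms and repackaged as Galois types, and the rationality question over $\Q$ is settled by explicit realizations (twists, Weil restrictions, genus-$2$ Jacobians) on one side and field-theoretic obstructions on the other. Two small calibrations against the actual source: first, the enumeration of subgroups of $\mathrm{USp}(4)$ satisfying the Sato-Tate axioms yields $55$ conjugacy classes, of which three are then shown \emph{not} to arise from abelian surfaces, so the count $52$ requires a realizability step for the groups that do occur and an exclusion argument for those that do not --- it is not a pure axiom count as your sketch suggests; second, \cite{FKRS} do not rely on the later algebraic Sato-Tate formalism of Banaszak--Kedlaya, but define the Sato-Tate group directly from the $\ell$-adic monodromy group and verify their axioms for abelian surfaces, so invoking that formalism is a convenience rather than a prerequisite. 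With those caveats, your reconstruction matches the structure of the cited proof; relative to the paper under review, the only \enquote{difference of route} is that the paper delegates everything to the reference, while you reproduce its skeleton.
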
}
\begin{proof}
 This is Theorem 1.4 in  \cite{FKRS}.
\end{proof}
\begin{corollary} \label{cor:PmInf}
Let $f$ and $\A_f$ be as above. \ctext{The generalized Sato-Tate conjecture holds for $A_f$ and the joint distribution} of $(a_{1,p}, a_{2,p})$ is given by $$\Phi:T\rightarrow \R :(x,y) \mapsto \frac{1}{2\pi ^2}\sqrt{\frac{(y-2x+2)(y+2x+2)} {x^{2}-4y+8}},$$
where $T$ is the subset of the plane (Fig. \ref{fig:IntRange})
$$T:=\left\{ (x,y)|\, y+2>|2x| \textnormal{ and } 4y< x^{2}+8  \right \}.$$ 	
Moreover denote by $\delta$ the measure with density $\Phi$ and let $S$ be a measurable set. Then $$\delta(S) \sim \frac{\#\{p <x \textnormal{ prime} \mid (a_{1,p},a_{2,p}) \in S\}}{\pi(x)}.$$
\end{corollary}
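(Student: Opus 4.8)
The plan is to identify the Sato--Tate group of $\A_f$ from its endomorphism structure, feed this into the classification of Theorem~\ref{theorem:FKRS} together with a genuine (unconditional) proof of the generalized Sato--Tate conjecture for $\A_f$, and then compute the resulting push-forward measure explicitly. First I would recall the basic properties of $\A_f$: by Shimura's construction $\A_f$ is an abelian surface over $\Q$ (since $[K_f:\Q]=2$) on which the Hecke algebra acts over $\Q$ through an order in $K_f$, so $\mathrm{End}^0_\Q(\A_f)=K_f$; in particular the real multiplication is defined over $\Q$. Because $f$ has neither CM nor inner twists, Ribet's analysis of the geometric endomorphism algebra of $\A_f$ (cf.~\cite{Ri80}) shows that $\A_f$ is absolutely simple with $\mathrm{End}^0_{\overline\Q}(\A_f)=K_f$ as well, so all its endomorphisms are already defined over $\Q$. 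Thus $\A_f$ is an absolutely simple abelian surface with real multiplication by the real quadratic field $K_f$, all of whose endomorphisms are rational; its Galois type is the one with $\mathrm{End}^0_{\overline\Q}(\A_f)\otimes\R\cong\R\times\R$ and trivial Galois action.

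Next I would pin down $\mathrm{ST}(\A_f)$. The real multiplication makes the normalized $\ell$-adic representation on $V_\ell(\A_f)\otimes\overline{\Q}_\ell$ split as $\rho_f\oplus\overline{\rho_f}$ with eigenvalues of unit absolute value and determinant $1$ on each piece, so $\mathrm{ST}(\A_f)^0$ is contained in the block-diagonal $\mathrm{SU}(2)\times\mathrm{SU}(2)\subset\mathrm{USp}(4)$; since all endomorphisms are defined over $\Q$, the group $\mathrm{ST}(\A_f)$ is connected. As $f$ has no CM, the projection of $\mathrm{ST}(\A_f)$ to each factor is all of $\mathrm{SU}(2)$, so by Goursat's lemma $\mathrm{ST}(\A_f)$ is either $\mathrm{SU}(2)\times\mathrm{SU}(2)$ or the graph of an automorphism of $\mathrm{SU}(2)$ or of $\mathrm{SO}(3)$; in the latter two cases one would get $\overline{\rho_f}\cong\rho_f\otimes\chi$ for a (trivial or quadratic) Dirichlet character $\chi$, i.e.\ an inner twist (and $\chi$ trivial would force $f$ to have rational coefficients), which is excluded. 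Hence $\mathrm{ST}(\A_f)=\mathrm{SU}(2)\times\mathrm{SU}(2)$, which is the Sato--Tate group attached by Theorem~\ref{theorem:FKRS} to the above Galois type. That the generalized Sato--Tate conjecture actually \emph{holds} for $\A_f$ — not merely that it singles out a candidate group — follows from the proof of the Sato--Tate conjecture for the modular representation $\rho_f$ combined with the independence of $\rho_f$ and $\overline{\rho_f}$ (again equivalent to the absence of inner twists), which makes all the Rankin--Selberg $L$-functions $L(s,\mathrm{Sym}^a\rho_f\otimes\mathrm{Sym}^b\overline{\rho_f})$, $(a,b)\neq(0,0)$, holomorphic and non-vanishing on $\mathrm{Re}(s)=1$; cf.~\cite{FKRS} and the references therein.

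It remains to compute $\Phi$. Writing $a_p(f)/\sqrt p=2\cos\theta_{1,p}$ and $\overline{a_p(f)}/\sqrt p=2\cos\theta_{2,p}$ with $\theta_{i,p}\in[0,\pi]$, the Weyl integration formula says that $(\theta_{1,p},\theta_{2,p})$ equidistributes for $\tfrac{4}{\pi^2}\sin^2\theta_1\sin^2\theta_2\d{\theta_1}\d{\theta_2}$, which under $u=2\cos\theta_1$, $v=2\cos\theta_2$ becomes $\tfrac1{4\pi^2}\sqrt{(4-u^2)(4-v^2)}\,\d u\,\d v$ on $[-2,2]^2$. Expanding $L_p(T)=(pT^2-a_p(f)T+1)(pT^2-\overline{a_p(f)}T+1)$ gives $a_{1,p}=-(u+v)$ and $a_{2,p}=2+uv$, and since each factor of the measure is symmetric under $u\mapsto-u$ (resp.\ $v\mapsto-v$) the law of $(a_{1,p},a_{2,p})$ equals that of $(u+v,\,2+uv)$. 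The map $(u,v)\mapsto(x,y)=(u+v,\,2+uv)$ is generically two-to-one with Jacobian $|u-v|=\sqrt{x^2-4y+8}$, and on the support $(4-u^2)(4-v^2)=(y-2x+2)(y+2x+2)$; summing over the two preimages yields
$$\Phi(x,y)=\frac{\sqrt{(4-u^2)(4-v^2)}}{2\pi^2\,|u-v|}=\frac1{2\pi^2}\sqrt{\frac{(y-2x+2)(y+2x+2)}{x^2-4y+8}},$$
with image exactly $T$: indeed $4y<x^2+8$ is equivalent to $u,v$ real and distinct, while $y+2>|2x|$ unwinds (on the support $u,v\in[-2,2]$) to $(2\mp u)(2\mp v)>0$, i.e.\ $u,v\in(-2,2)$. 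Finally, the asserted asymptotic $\delta(S)\sim\pi(x)^{-1}\#\{p<x\text{ prime}\mid(a_{1,p},a_{2,p})\in S\}$ is just equidistribution evaluated on $S$; it holds for every $\delta$-continuity set, and since $\delta=\Phi\,\d x\,\d y$ is absolutely continuous this covers all the regions (cut out by an inequality of the shape $\sqrt{2-y+x^2/4}<c$) that occur later.

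The main obstacle is the second paragraph: extracting the correct Galois type from the bare hypotheses ``no CM, no inner twist'' (where Ribet's theory and the Goursat argument do the real work), and, crucially, citing a valid proof that the generalized Sato--Tate conjecture holds for $\A_f$ rather than invoking only the conditional classification of Theorem~\ref{theorem:FKRS} — this is where the modularity of $\rho_f$ and the independence of $\rho_f,\overline{\rho_f}$ enter. By contrast, the density computation of the last paragraph is routine bookkeeping with the Weyl integration formula and two changes of variables.
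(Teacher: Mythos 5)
Your argument is correct, and its skeleton matches the paper's: both proofs first use Ribet's theorem to see that, absent inner twists, $\mathrm{End}^0_{\overline\Q}(\A_f)=K_f$ equals $\mathrm{End}^0_{\Q}(\A_f)$, so the Galois type is $[1,\R\times\R]$. From there you diverge in two places, in both cases replacing a table lookup by a direct argument. First, the paper identifies the Sato--Tate group as $SU(2)\times SU(2)$ simply by consulting Tables 8 and 11 of \cite{FKRS}, whereas you rederive it by hand (connectedness from rationality of the endomorphisms, surjectivity of each projection from the absence of CM, and Goursat's lemma to rule out graph subgroups, which would force an inner twist); this is more self-contained but relies on the dictionary between graph subgroups and twist-equivalences that the FKRS classification packages for you. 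Second, the paper reads the density $\Phi$ off Table 5 of \cite{FKRS}, while you recompute it from the Weyl measure on $SU(2)\times SU(2)$ via the two-to-one change of variables $(u,v)\mapsto(u+v,\,2+uv)$; your computation (Jacobian $|u-v|=\sqrt{x^2-4y+8}$, $(4-u^2)(4-v^2)=(y-2x+2)(y+2x+2)$, image $T$) is correct, and your restriction of the equidistribution statement to $\delta$-continuity sets is actually more careful than the paper's ``measurable $S$''. The one point where you are vaguer than the paper is the crucial assertion that the generalized Sato--Tate conjecture genuinely \emph{holds} for $\A_f$: you sketch the right mechanism (holomorphy and non-vanishing on $\mathrm{Re}(s)=1$ of all $L(s,\mathrm{Sym}^a\rho_f\otimes\mathrm{Sym}^b\overline{\rho_f})$, which rests on potential automorphy of the symmetric powers, not merely on ``the proof of Sato--Tate for $\rho_f$ plus independence'') but cite only ``\cite{FKRS} and references therein''; the paper instead invokes the precise result you need, namely Johansson's proof of generalized Sato--Tate for abelian surfaces of Galois type $[1,\R\times\R]$ (\cite{Joha}, Proposition 22), which is the clean way to close that step.
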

\begin{proof}
The $\Q$-algebra of endomorphisms  of $\A_f$ over $\Q$ is $K_f$. Moreover the $\Q$-algebra of endomorphisms of $\A_f$ over $\overline \Q$ is also $K_f$ since  $f$ does not have any inner twists (cf.~\cite[Theorem 5]{RibetEndo}). So all endomorphisms of $\A_f$ over $\overline \Q$ are already defined over $\Q$. In particular the Galois type of $\A_f$ is 
$$[\textnormal{Gal}(\Q/\Q), K_f \otimes_\Z \R ] = [1,\R\times \R],$$
since $K_f$ is a real quadratic number field. \ctext{The generalized Sato-Tate conjecture is proven for abelian surfaces of this Galois type by Christian Johansson in \cite{Joha}[Proposition 22].} It follows from Tables $8$ and $11$ in \cite{FKRS}  that the Sato-Tate group of $\A_f$ is $SU(2)\times SU(2)$. Finally, the joint distribution function of this group is given by \cite[Table~5]{FKRS}.
\end{proof}
The following result is proven  by K. Koo, W. Stein and G. Wiese (cf.~\cite[Corollary~1.1]{KSW08}). \ctext{We give an alternative proof using recent results on Sato-Tate equidistribution}.
\ctext{\begin{corollary}
Let $f$ be as above. The set
$\{p <x  \textnormal{ prime} \mid a_p(f)\in \Q\}$ has density zero.
\end{corollary}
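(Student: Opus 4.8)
The plan is to rephrase the condition $a_p(f)\in\Q$ as the condition that the point $(a_{1,p},a_{2,p})$ lies on a fixed parabolic arc, and then to use that the Sato-Tate measure $\delta$ of Corollary \ref{cor:PmInf}, being absolutely continuous with respect to Lebesgue measure, assigns zero mass to this one-dimensional set. First I would record the translation: since $\overline{\,\cdot\,}$ generates $\mathrm{Gal}(K_f/\Q)$ one has $a_p(f)\in\Q\Leftrightarrow Z_p=0$, and dividing the identity $|Z_p|=\sqrt{8p-4Y_p+X_p^2}$ by $p$ shows this is equivalent to $a_{1,p}^2-4a_{2,p}+8=0$. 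Thus
$$\{p \text{ prime}\mid a_p(f)\in\Q\}=\{p \text{ prime}\mid (a_{1,p},a_{2,p})\in C\},\qquad C:=\{(x,y)\in\overline T\mid x^2-4y+8=0\},$$
where $C$ is precisely an arc of the parabola $4y=x^2+8$ bounding the region $T$.

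Next I would observe that $C$ is a closed subset of $\R^2$ with empty interior and two-dimensional Lebesgue measure $0$; since $\delta$ has a density with respect to Lebesgue measure, $\delta(C)=0$ (that $\Phi$ blows up along $C$ is irrelevant, as one integrates a finite density over a null set). In particular $\partial C=C$ has $\delta$-measure $0$, so $C$ is a continuity set for $\delta$, and the equidistribution statement in Corollary \ref{cor:PmInf} applies to it, giving
$$\lim_{x\to\infty}\frac{\#\{p<x \text{ prime}\mid a_p(f)\in\Q\}}{\pi(x)}=\lim_{x\to\infty}\frac{\#\{p<x \text{ prime}\mid (a_{1,p},a_{2,p})\in C\}}{\pi(x)}=\delta(C)=0,$$
which is the assertion.

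The only delicate point, and thus the main obstacle such as it is, is that Sato-Tate equidistribution of the points $(a_{1,p},a_{2,p})$ a priori controls the proportion of primes with $(a_{1,p},a_{2,p})\in S$ only when $S$ is a $\delta$-continuity set, so one has to verify (as above) that $C$ is one; this is immediate here because $C$ is a closed algebraic curve with empty interior. Should one prefer not to rely on this, the same conclusion follows from the trivial inclusion $\{p\mid Z_p=0\}\subseteq\{p\mid |Z_p|<\varepsilon\sqrt p\}$, which translates into $\{p \text{ prime}\mid a_p(f)\in\Q\}\subseteq\{p \text{ prime}\mid (a_{1,p},a_{2,p})\in S_\varepsilon\}$ with $S_\varepsilon:=\{(x,y)\in\overline T\mid x^2-4y+8<\varepsilon^2\}$ an open neighbourhood of $C$; each $S_\varepsilon$ is manifestly a continuity set, so the limiting proportion of such primes is at most $\delta(S_\varepsilon)$, and $\delta(S_\varepsilon)\downarrow\delta(C)=0$ as $\varepsilon\to 0$.
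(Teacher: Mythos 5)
Your proof is correct and follows essentially the same route as the paper: translate $a_p(f)\in\Q$ into the condition that $(a_{1,p},a_{2,p})$ lies on the parabolic arc $x^2-4y+8=0$, apply the equidistribution statement of Corollary \ref{cor:PmInf}, and conclude since this curve has $\delta$-measure zero. Your additional verification that the arc is a $\delta$-continuity set (and the fallback via the neighbourhoods $S_\varepsilon$) is a careful refinement of a point the paper glosses over by stating Corollary \ref{cor:PmInf} for arbitrary measurable sets, but it does not change the argument.
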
}
\begin{proof}
Let $S = \{(x,y) \mid \sqrt{x^2-4y+8}=0\}$ then by Corollary \ref{cor:PmInf} 
$$\frac{\#\{p <x \textnormal{ prime}\mid a_p(f)\in \Q\} }{\pi(x)}= \frac{\#\{p <x \textnormal{ prime} \mid (a_{1,p},a_{2,p})\in S\}}{\pi (x) } \sim \delta(S).$$
Clearly, $\delta(S)=0$.
\end{proof}

\begin{figure}[h]
\centering
\includegraphics[width= 0.5\textwidth]{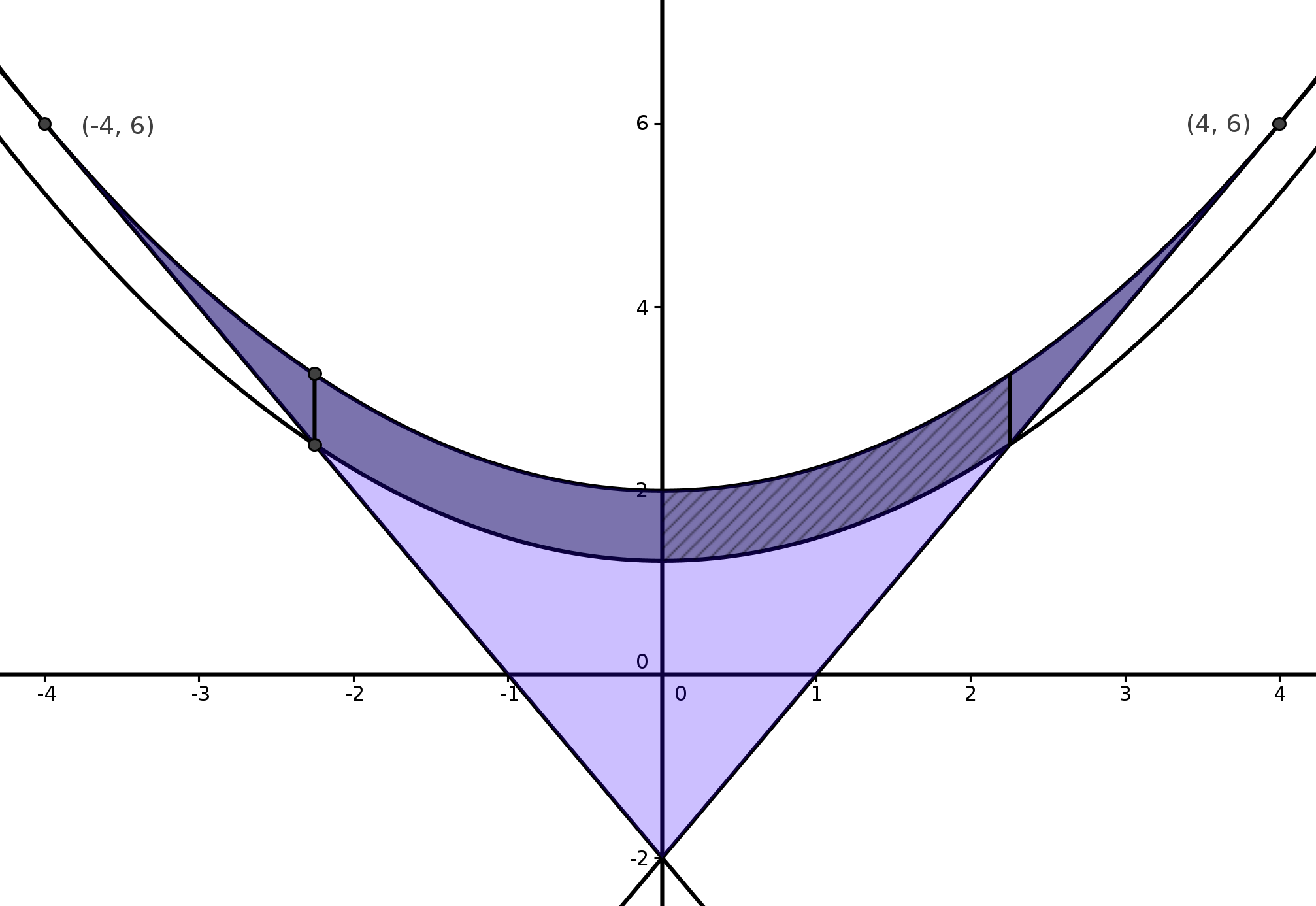}
\caption{The areas $T$ (dark and light blue),  $T_\varepsilon$ (dark blue) and $T_\varepsilon^1$ (hatched).} 
\label{fig:IntRange}
\end{figure}

Define for any $\varepsilon>0$
$$T_{\varepsilon} := \left\{(x,y) \in T \mid \sqrt{x^2/4-y + 2} < \varepsilon \right\}.$$
Then 
$$Z_p \in \Big]-\frac{m\sqrt D} 2,\frac{m\sqrt D} 2 \Big[  \Leftrightarrow  (a_{1,p},a_{2,p}) \in T_{m\sqrt D/ (4\sqrt p) }.$$

\ctext{By Corollary \ref{cor:PmInf}}
$$\delta(T_{{m\sqrt D}/(4\sqrt p)}) \sim \frac{ \#\Big\{q < x  \textnormal{ prime} \mid \sqrt{2 - a_{2,q} + a_{1,q}^2/4} < \frac{m\sqrt D}{4\sqrt p}\Big\}}{\pi(x)}$$
since $T_{m\sqrt D/ (4\sqrt p)}$ is a measurable set. \ctext{We will need the following heuristic assumption}.
\begin{assumption}\label{ass:PmAss}
Let $m$ be a positive integer. Then 
$$P^m(x) \sim \frac{1}{\pi(x)}\sum^x_{p=2} {\delta(T_{m \sqrt D/(4\sqrt p)})}$$
where the sum is taken only over primes. 
\end{assumption} 
The idea behind this assumption is that  approximating the probability of $\left|\frac{Z_p}{2\sqrt p}\right| < \frac{m\sqrt D}{4\sqrt p}$ (which is either $1$ or $0$) by the probability that $\left|\frac{Z_q}{2\sqrt q}\right| < \frac{m\sqrt D}{4\sqrt p}$ for any prime $q$ is 'good on average'. Note that for each individual prime $p$ this approximation is bad. However the assumption states that summing over all primes $p$ does yield a good approximation.  

\begin{lemma}\label{lemma:DensT}
The measure of the set $T_{\varepsilon}$ is 
$$\delta (T_{\varepsilon}) = \frac{32}{3\pi^2}\varepsilon +o \left(\varepsilon \right).$$
\end{lemma}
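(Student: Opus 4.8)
The plan is to compute $\delta(T_\varepsilon)=\int\!\!\int_{T_\varepsilon}\Phi(x,y)\,\d x\,\d y$ directly and extract the linear term in $\varepsilon$. First I would change variables to the coordinates in which the region $T_\varepsilon$ is naturally described. Writing $u := x^2/4 - y + 2$, the set $T_\varepsilon$ is cut out by $0 < \sqrt{u} < \varepsilon$, i.e.\ $0 < u < \varepsilon^2$, together with the other defining inequality of $T$, namely $y + 2 > |2x|$, which in the new variables reads $x^2/4 - u + 4 > |2x|$, i.e.\ $(|x|-4)^2 > 4u$; for $u$ small this is just $|x| < 4 - 2\sqrt u$ (the branch $|x| > 4 + 2\sqrt u$ being excluded because one checks it lies outside $T$). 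So in the $(x,u)$-coordinates, $T_\varepsilon$ is essentially the thin strip $\{\,|x| < 4 - 2\sqrt u,\ 0 < u < \varepsilon^2\,\}$, and the Jacobian of $(x,y)\mapsto(x,u)$ has absolute value $1$.

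Next I would rewrite the density. One has $y - 2x + 2 = x^2/4 - u + 4 - 2x + (2x - x^2/4 - \text{correction})$ — more cleanly, observe $y + 2 = x^2/4 - u + 4$, so $y - 2x + 2 = (x/2-2)^2 - u = (2-x/2)^2 - u$ and $y + 2x + 2 = (x/2+2)^2 - u = (2+x/2)^2 - u$; and the denominator inside the square root is $x^2 - 4y + 8 = 4u$. Hence
$$\Phi\,\d x\,\d y = \frac{1}{2\pi^2}\sqrt{\frac{\big((2-x/2)^2-u\big)\big((2+x/2)^2-u\big)}{4u}}\ \d x\,\d u.$$
Therefore $\delta(T_\varepsilon) = \frac{1}{4\pi^2}\int_0^{\varepsilon^2}\frac{1}{\sqrt u}\Big(\int_{-(4-2\sqrt u)}^{\,4-2\sqrt u}\sqrt{\big((2-x/2)^2-u\big)\big((2+x/2)^2-u\big)}\ \d x\Big)\d u.$ As $u \to 0^+$ the inner integral converges to $\int_{-4}^{4}\big|(2-x/2)(2+x/2)\big|\,\d x = \int_{-4}^4 (4 - x^2/4)\,\d x = \tfrac{32}{3}$. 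So the inner integral equals $\tfrac{32}{3} + O(\sqrt u)$ uniformly for $u \in (0,\varepsilon^2)$ (the error is controlled because both the shrinking of the $x$-range and the $-u$ shift perturb the integrand by $O(\sqrt u)$ on a bounded interval), and then
$$\delta(T_\varepsilon) = \frac{1}{4\pi^2}\int_0^{\varepsilon^2}\frac{1}{\sqrt u}\Big(\frac{32}{3} + O(\sqrt u)\Big)\,\d u = \frac{1}{4\pi^2}\cdot\frac{32}{3}\cdot 2\varepsilon + O(\varepsilon^2) = \frac{16}{3\pi^2}\varepsilon + O(\varepsilon^2).$$

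Wait — this gives $\frac{16}{3\pi^2}$, not the claimed $\frac{32}{3\pi^2}$; the discrepancy is a factor of $2$ coming from how the interval $T_\varepsilon$ versus the symmetric interval for $Z_p$ is set up, so I would recheck the constant in the defining inequality (the paper uses $\sqrt{x^2/4 - y + 2} < \varepsilon$, whereas $|Z_p|/(2\sqrt p) = \sqrt{2 - a_{2,p} + a_{1,p}^2/4}$, and these match directly, so the factor should be as I computed — I would resolve this by carefully tracking whether the region is $0<u<\varepsilon^2$ or $0<\sqrt u<\varepsilon$ with $\sqrt u$ ranging over, effectively, a doubled contribution, or equivalently re-examining whether $\delta$ is normalized as a probability measure on $T$). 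The main obstacle, then, is not the asymptotic analysis itself — which is a routine Taylor expansion once the coordinate change is made — but pinning down the exact numerical constant: one must be scrupulous about the Jacobian, about which branch of $(|x|-4)^2 > 4u$ is retained, about the factor of $2$ from $\d u = 2\sqrt u\,\d(\sqrt u)$, and about the normalization of $\delta$. I would double-check the final constant against the numerical data in the paper's last section, and reconcile any factor-of-two with the conventions fixed in Corollary \ref{cor:PmInf}.
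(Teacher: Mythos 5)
Your change of variables is essentially the paper's own: the paper parametrizes $T_\varepsilon$ by $y=u^2/4+2-v^2$ (i.e.\ works with $v=\sqrt{x^2/4-y+2}$, Jacobian $2v\d v\d u$), restricts to $x>0$ by evenness, and treats the corner strip $4-2\varepsilon<x<4$ separately as $\O(\varepsilon^2)$ before computing the limit by l'H\^opital; your version integrates directly in $u$ with the $1/\sqrt u$ weight and a uniform $\O(\sqrt u)$ estimate on the inner integral, which is sound and even gives the stronger error term $\O(\varepsilon^2)$. The only genuine defect is the arithmetic in the last step: $\int_{-4}^{4}\bigl(4-\tfrac{x^2}{4}\bigr)\d x = 32-\tfrac{32}{3}=\tfrac{64}{3}$, not $\tfrac{32}{3}$. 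With the correct value your own formula gives
$$\delta(T_\varepsilon)=\frac{1}{4\pi^2}\cdot\frac{64}{3}\cdot 2\varepsilon+\O(\varepsilon^2)=\frac{32}{3\pi^2}\varepsilon+\O(\varepsilon^2),$$
exactly the constant of the lemma. So there is no factor-of-two discrepancy to hunt down: the Jacobian, the branch selection, the $\d u=2\sqrt u\,\d(\sqrt u)$ bookkeeping and the normalization of $\delta$ are all as you set them up, and the mismatch you flag is created solely by that slip. As submitted, the argument ends with the wrong constant $\tfrac{16}{3\pi^2}$ and an unresolved doubt, so it does not prove the statement; fixing that single evaluation repairs it completely.

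One side remark: your reason for discarding the outer branch $|x|>4+2\sqrt u$ (``it lies outside $T$'') is not literally warranted by the displayed definition of $T$ — for instance $(x,y)=(5,\,8.1)$ satisfies both $y+2>|2x|$ and $4y<x^2+8$ — but it is correct in substance, since the support of the $SU(2)\times SU(2)$ Sato--Tate measure is the image of $[-2,2]^2$ under $x=t_1+t_2$, $y=2+t_1t_2$, which forces $|x|\le 4$; the paper's proof likewise tacitly restricts to $0<x<4$. A sentence making that restriction explicit would make your write-up airtight.
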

\begin{proof}
It suffices to bound the integral
$$\delta (T_\varepsilon ) = \int_{T_\varepsilon} \Phi \d A.$$
 Since the density function $\Phi$ is even with respect to the first variable $x$ we can restrict to positive $x$. We split the integration domain $T_\varepsilon$ in two parts (Fig. \ref{fig:IntRange})
\begin{align*}
T_\varepsilon^1 &:= \{(x,y) \in T_\varepsilon \mid 0<x<4-2\varepsilon\} \textnormal{ and}\\
T_\varepsilon^2 &:= \{(x,y) \in T_\varepsilon \mid 4-2\varepsilon<x<4\}.
\end{align*} 
So 
$$\delta  (T_\varepsilon ) =2\int_{T^1_\varepsilon} \Phi \d A +  2\int_{T^2_\varepsilon} \Phi \d A.$$

Parametrizations of both sets are given respectively by
\begin{align*}
T_\varepsilon^1 & = \{(u,u^2/4+2-v^2) \mid 0<u<4-2\varepsilon\ \wedge \ 0<v<\varepsilon\} \textnormal{ and} \\
T_\varepsilon^2 & = \{(u,u^2/4+2-v^2) \mid 4-2\varepsilon<u<4\ \wedge \ 0<v<2-u/2\}.
\end{align*}
The determinant of the Jacobian of the parametrization, $\d A$,  is $2v \d v\d u$ so 
\begin{align*}
&2\Phi(u,u^2/4+2-v^2) \d A \\& =  \frac{2}{2\pi ^2} \sqrt{\frac{(u^2/4+2-v^2 - 2u+2)(u^2/4+2-v^2 + 2u+2)}{u^2-u^2-8+4v^2 +8} }2v \d v \d u \\
  &=  \frac{1}{4\pi ^2}\sqrt{(u^2+16-4v^2-8u)(u^2+16-4v^2+8u) } \d v \d u \\
  &=   \frac{1}{4\pi ^2}\sqrt{(u^2+16-4v^2)^2-64u^2 } \d v \d u.
\end{align*}  
First we show that $$ 2\int_{T^2_\varepsilon} \Phi \d A = \O(\varepsilon^2).$$
It suffices to show that the limit of $\frac{1}{\varepsilon^2} 2\int_{T^2_\varepsilon} \Phi \d A$ is finite if $\varepsilon$ tends to zero. Let us compute
\begin{align*}
	\lim_{\varepsilon \rightarrow 0} \frac{1}{\varepsilon^2} 2\int_{T^2_\varepsilon} \Phi \d A & =\lim_{\varepsilon \rightarrow 0} \frac{1}{\varepsilon^2} \frac{1}{4\pi^2} \int_{4-2\varepsilon}^4 \int_0^{2-u/2} \sqrt{(u^2+16-4v^2)^2-64u^2 } \d v \d u.
	\intertext{Then by l'H\^{o}pital's rule we obtain }
	\lim_{\varepsilon \rightarrow 0} \frac{1}{\varepsilon^2} 2\int_{T^2_\varepsilon} \Phi \d A & \stackrel{\widehat{\textnormal{H}}}{=} \lim_{\varepsilon \rightarrow 0} \frac{1}{\varepsilon} \frac{1}{2\pi^2}  \int _ 0^{\varepsilon}   \sqrt{(\varepsilon^2+16-4v^2)^2-64\varepsilon^2 } \d v \\
	& < \lim_{\varepsilon \rightarrow 0} \frac{1}{\varepsilon} \frac{1}{2\pi^2}  \int _ 0^{\varepsilon}   \sqrt{(\varepsilon^2+16)^2-64\varepsilon^2 } \d v \\
	& = \lim_{\varepsilon \rightarrow 0}\frac{1}{2\pi^2}\sqrt{(\varepsilon^2+16)^2-64\varepsilon^2 }  \\
	& = \frac{8}{\pi^2}< \infty.
\end{align*} 
In particular we obtain 
$$\delta(T_\varepsilon) = 2 \int_{T_\varepsilon^1} \Phi \d A + \O(\varepsilon^2).$$
Finally, denote $\phi(u,v) = 2\Phi \frac{ \d A}{\d v \d u} =\frac{1}{4\pi^2} \sqrt{(u^2+16-4v^2)^2-64u^2}$. Then, again by l'H\^{o}pital's rule and Leibniz rule for double integration
\begin{align*}
\lim_{\varepsilon \rightarrow 0} \frac{\frac{32}{3\pi^2} \varepsilon - \delta(T_\epsilon) } {\varepsilon} &= \lim_{\varepsilon \rightarrow 0} \frac 1 \varepsilon \left( \frac{32}{3\pi^2} \varepsilon - \int_{0}^{4-2\varepsilon} \int _0^\varepsilon \phi(u,v) \d v \d u \right) \\
&  \stackrel{\widehat{\textnormal{H}}}{=} \lim_{\varepsilon \rightarrow 0} \frac{32}{3\pi^2}  -\int_{0}^{4-2\varepsilon} \phi(u,\varepsilon) \d u + 2\int _0^\varepsilon \phi(4-2\varepsilon,v) \d v \\
& = \frac{32}{3\pi^2}  -\int_{0}^{4} \phi(u,0) \d u + 0 \\
& =  \frac{32}{3\pi^2}  - \frac{1}{4\pi ^2} \int_{0}^{4} \sqrt{(u^2+16)^2 -64u ^2} \d u \\
&  =  \frac{32}{3\pi^2}  - \frac{1}{4\pi ^2} \int_{0}^{4} (16- u ^2) \d u \\
&=  \frac{32}{3\pi^2}  - \frac{1}{4\pi ^2} \Big[16u-\frac{u^3}{3}\Big]_0^4 \\
&=  0. \qedhere
\end{align*}
\end{proof}

\ctext{\begin{corollary} \label{cor:PmCor}
For all $m>0$ satisfying Assumption \ref{ass:PmAss} 
$$P^m(x) \sim \frac{16m\sqrt D}{3\pi^2 \sqrt x} .$$
\end{corollary}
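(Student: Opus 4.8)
The plan is to feed the asymptotic expansion of $\delta(T_\varepsilon)$ from Lemma \ref{lemma:DensT} into the average provided by Assumption \ref{ass:PmAss}, and then evaluate the resulting sum over primes by partial summation. Substituting $\varepsilon = m\sqrt D/(4\sqrt p)$ into $\delta(T_\varepsilon) = \tfrac{32}{3\pi^2}\varepsilon + o(\varepsilon)$ gives, as $p\to\infty$,
$$\delta\big(T_{m\sqrt D/(4\sqrt p)}\big) = \frac{8m\sqrt D}{3\pi^2}\cdot\frac{1}{\sqrt p} + o\!\Big(\frac{1}{\sqrt p}\Big).$$

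First I would sum this over the primes $p<x$. The error term causes no trouble: for every $\eta>0$ there is a $p_0$ beyond which the error is at most $\eta/\sqrt p$, and the finitely many primes below $p_0$ contribute a bounded quantity, so $\sum_{p<x} o(1/\sqrt p) = o\big(\sum_{p<x} 1/\sqrt p\big)$. Hence
$$\sum_{p<x}\delta\big(T_{m\sqrt D/(4\sqrt p)}\big) \sim \frac{8m\sqrt D}{3\pi^2}\sum_{p<x}\frac{1}{\sqrt p}.$$
Next I would evaluate $\sum_{p<x} p^{-1/2}$. By partial summation against $\pi(t)$ together with the prime number theorem $\pi(t)\sim t/\log t$, this sum is asymptotic to $\int_2^x \frac{\d t}{\sqrt t\,\log t}$; differentiating $\tfrac{2\sqrt t}{\log t}$ shows the integrand is $\tfrac{\log t-2}{\sqrt t(\log t)^2}$, so the integral is $\sim \tfrac{2\sqrt x}{\log x}$. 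Combining the two displays,
$$\sum_{p<x}\delta\big(T_{m\sqrt D/(4\sqrt p)}\big) \sim \frac{16m\sqrt D}{3\pi^2}\cdot\frac{\sqrt x}{\log x}.$$

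Finally, by Assumption \ref{ass:PmAss} we have $P^m(x)\sim \tfrac{1}{\pi(x)}\sum_{p<x}\delta\big(T_{m\sqrt D/(4\sqrt p)}\big)$, and dividing the last display by $\pi(x)\sim x/\log x$ yields $P^m(x)\sim \tfrac{16m\sqrt D}{3\pi^2\sqrt x}$, as claimed.

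The only genuinely analytic ingredient is the asymptotic $\sum_{p<x}p^{-1/2}\sim 2\sqrt x/\log x$, which is a standard consequence of the prime number theorem via partial summation, so I do not anticipate a serious obstacle. The one point that needs a little care is that the remainder in Lemma \ref{lemma:DensT} is a genuine little-$o$ as $\varepsilon\to 0$, which is what guarantees that after the substitution it still sums against $1/\sqrt p$ to something of strictly smaller order than the main term $\sqrt x/\log x$.
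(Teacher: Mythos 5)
Your proposal is correct and follows essentially the same route as the paper: substitute $\varepsilon = m\sqrt D/(4\sqrt p)$ into Lemma \ref{lemma:DensT}, sum over primes using $\sum_{p\le x} \tfrac{1}{2\sqrt p}\sim \tfrac{\sqrt x}{\log x}$, and divide by $\pi(x)\sim x/\log x$ via Assumption \ref{ass:PmAss}. The only difference is that you spell out the treatment of the $o(\varepsilon)$ remainder and the partial-summation evaluation of $\sum_{p\le x}p^{-1/2}$, which the paper simply quotes.
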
}
\begin{proof}
By Assumption \ref{ass:PmAss}, Lemma \ref{lemma:DensT}  with $\varepsilon = m\sqrt D/(4\sqrt p)$  and the fact that $\sum_{p=2}^x \frac 1 {2\sqrt p} \sim \frac{\sqrt x }{\log x}$ respectively we obtain
\begin{align*}
P^m(x) & \sim \frac 1 {\pi(x)} \sum_{p=2}^x \delta(T_{m\sqrt D /(4\sqrt p)}) \\
 & \sim \frac1{\pi(x)} \frac{16m\sqrt D }{3\pi^2} \sum_{p=2}^x \frac 1 {2\sqrt p } \\
 & \sim \frac{16m\sqrt D }{3\pi^2} \frac{\sqrt x}{\pi(x) \log x} \\
 & \sim \frac{16m\sqrt D}{3\pi^2\sqrt x} . \qedhere
\end{align*}
\end{proof}

\section{Finite places}\label{section:FinPla}
In this section we describe the remaining factor $P_m(x)$. No heuristic is needed to obtain the results of this section. For each positive integer $m$ the factor  $P_m$ can be computed by Chebotarev's density theorem and  the image of the  mod $m$ Galois representation attached to $f$.

Let $f$ be as above and denote the ring of integers of its coefficient field by $\O_f$. Let  $m$ be a positive integer and denote $\O_m := \O_f \otimes _\Z \Z/m\Z$. Denote the absolute Galois group of $\Q$ by $G_\Q$. Then the action of $G_\Q$ on the $m$-torsion points of $\A_f$ induces a mod-$m$ representation 
$$\rho_m :G_\Q \rightarrow GL_2(\O_m).$$
By taking the inverse limit over all integers $m$ we obtain an adelic representation
$$\widehat \rho :=\varprojlim_{m} \rho_m: G_\Q \rightarrow GL_2(\widehat \O),$$
where  $\widehat \O = \O_f \otimes _\Z \widehat \Z$ is the ring of finite adeles of $K_f$.
If $\ell$ is a prime we obtain an $\ell$-adic representation  by taking the  limit over all powers of $\ell$ torsion points
$$\widehat \rho_\ell := \varprojlim_k \rho_{\ell^k} : G_\Q \rightarrow GL_2(\widehat \O_\ell),$$
with $\widehat \O_\ell = \O_f\otimes_\Z \Z_\ell$. Note that  by definition $\widehat \rho = \prod_\ell \widehat \rho_\ell$. Moreover for any positive integer $m$ and any prime  $\ell$ dividing $m$   the following diagram commutes

$$ \xymatrix{  & G_\Q \ar[dd]^{\widehat \rho } \ar[dddl]_{\widehat \rho_\ell}  \ar[dddr]^{\rho_m}& \\  \\
				   & GL_2(\widehat \O)  \ar[dl] \ar[dr] \ar[d]& \\
				   GL_2(\widehat\O_\ell) \ar[r]& GL_2(\O_\ell)  &\ar[l] GL_2(\O_m).		
			}  
	$$
Let $\overline{\, \cdot \, } $ be the unique non-trivial element of the Galois group of $K_f$ over $\Q$. Then $\overline{\, \cdot \, } $ induces by the tensor product  endomorphisms on $\widehat \O$, $\widehat \O_\ell$ and $\O_m$. By abuse of notation we denote each of these morphisms by $\overline{\, \cdot \, } $. Hence we obtain the following maps
\begin{align*}
Z&:GL_2(\widehat\O) \rightarrow \widehat \O: \sigma \mapsto \tr \sigma - \overline{\tr \sigma }, \\
Z&:GL_2(\widehat \O_\ell) \rightarrow \widehat \O_\ell: \sigma \mapsto \tr \sigma - \overline{\tr \sigma }\ \textnormal{ and}\\
Z&:GL_2(\O_m) \rightarrow \O_m: \sigma \mapsto \tr \sigma - \overline{\tr \sigma }.
\end{align*}

Consider the following (subsets) of the images of the representations
\begin{align*}
\widehat \G &:= \textnormal{Im}\ \widehat\rho, & \widehat \G^t &:=\{\sigma \in \widehat \G \mid Z(\sigma) = 0\},\\
\widehat \G_\ell &:= \textnormal{Im} \ \widehat \rho_\ell, & \widehat \G_\ell^t &:= \{\sigma \in \widehat \G_\ell \mid Z(\sigma) = 0\},\\
\G_m &:= \textnormal{Im} \ \rho_m, & \G_m^t &:=  \{\sigma \in  \G_m \mid Z(\sigma) = 0\}.
\end{align*}

Define  for each positive integer $m$
\begin{align*}
	F_{m}& :=  m \frac{ \# \G_m^t}{\# \G_m}.
\end{align*}
\begin{lemma}\label{lemma:chebdens}
	Let $m$ be a positive integer then
	$$P_{m}(x) \sim \frac{1} m F_{m}.$$
\end{lemma}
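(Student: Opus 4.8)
The plan is to express $P_m(x)$ as a Chebotarev-type density for a condition cut out by the mod-$m$ representation $\rho_m$, and then to unwind the correspondence between that density and the quantity $F_m/m$.

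First I would observe that for a prime $p \nmid Nm$, the reduction $\rho_m(\frob p)$ is a well-defined conjugacy class in $\G_m = \textnormal{Im}\,\rho_m \subset GL_2(\O_m)$, and that $\tr\rho_m(\frob p) \equiv a_p(f) \bmod m\O_f$ under the identification $\O_m = \O_f\otimes_\Z \Z/m\Z$. Applying $\overline{\,\cdot\,}$ and subtracting, we get $Z(\rho_m(\frob p)) \equiv Z_p \bmod m\sqrt D\,\Z$ (after identifying $\sqrt D\,\Z/m\sqrt D\,\Z$ with the relevant subgroup of $\O_m$). Hence the condition $Z_p \equiv 0 \bmod m\sqrt D\,\Z$ is, up to the finitely many primes dividing $Nm$, exactly the condition $\rho_m(\frob p) \in \G_m^t$. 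Since $\G_m^t$ is a union of conjugacy classes in $\G_m$ (it is defined by a conjugation-invariant condition, as $Z$ is built from the trace), the Chebotarev density theorem applied to the finite Galois extension cut out by $\rho_m$ gives
$$P_m(x) = \frac{\#\{p<x \text{ prime}\mid Z_p \equiv 0 \bmod m\sqrt D\,\Z\}}{\pi(x)} \sim \frac{\#\G_m^t}{\#\G_m}.$$
(The finitely many ramified primes contribute $o(\pi(x))$ and so do not affect the asymptotic.)

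Finally I would simply compare: by definition $F_m = m\,\#\G_m^t/\#\G_m$, so $\frac1m F_m = \#\G_m^t/\#\G_m$, which is exactly the right-hand side above, giving $P_m(x) \sim \frac1m F_m$.

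The main obstacle — really the only nontrivial point — is making the reduction $\tr\rho_m(\frob p) \equiv a_p(f) \bmod m$ and the induced identity $Z(\rho_m(\frob p)) \equiv Z_p \bmod m\sqrt D\,\Z$ fully precise: one must check that the action of $\overline{\,\cdot\,}$ on $\O_m$ induced by the tensor product is compatible with the Galois action on coefficients, so that the residue class of $Z_p$ modulo $m\sqrt D\,\Z$ is genuinely read off from $\rho_m(\frob p)$, and that the exceptional set (primes dividing the level $N$ and the modulus $m$, where $\rho_m$ may ramify or $a_p$ may not be given by the trace) is finite. Once that bookkeeping is in place, the rest is a direct invocation of Chebotarev; the positivity $0 < \alpha_m$ alluded to after Assumption \ref{ass:ind1} then follows because $\G_m^t$ is nonempty (it always contains complex conjugation's image, or at least the identity after suitable normalization), so $F_m > 0$.
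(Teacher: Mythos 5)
Your argument is correct and is essentially the paper's own proof: both rest on the identity $\tr\rho_m(\frob p)=\imath(a_p(f))$ in $\O_m$ (which the paper cites from \cite[Theorem~3.1.a]{DDT}), the observation that $Z$ is conjugation-invariant so that $\G_m^t$ is a union of conjugacy classes, and a direct application of Chebotarev to conclude $P_m(x)\sim \#\G_m^t/\#\G_m = \frac1m F_m$. The bookkeeping you flag (compatibility of $\overline{\,\cdot\,}$ with the coefficient action and discarding the finitely many primes dividing $mN$) is handled in the paper in exactly the same way, so no further work is needed.
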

\begin{proof}
	Let $m$ be a positive integer. Then by Chebotarev's density theorem   the conjugacy class of $\frob p$  is equidistributed in $\G_m$ where $p$ varies over all  primes  not dividing $m N$, i.e., for all  conjugacy class   $Cl$  of   $\G_m$ we have 
 $$\# \{p<x \textnormal{ prime }  \mid p\nmid mN \text{ and } \rho_{m} (\frob {p}) \in Cl \}\sim \frac {\# 	Cl} {\#  \G_m }  \pi(x).$$ 
	Denote by $\imath$ the morphism  of $\O_f$ to $ \O_m$ given by \ctext{sending $z$ to $ z\otimes 1$}.  If $p$ is a prime that does not divide $mN$, then  $\tr (\rho_m( \frob p)) = \imath (a_p(f)) \in \O_m$  (cf.~\cite[Theorem~3.1.a]{DDT}). Note that $Z(\sigma)$ only depends on the  conjugacy class of the matrix $\sigma$ so  $Z(\rho_{m}( \textnormal{Frob}_p))$ is well defined. Moreover $$\imath(Z_p) = Z(\rho_m(\textnormal{Frob}_p)) \in \O_m,$$
	since $Z_p = a_p(f) - \overline{a_p(f)}$ by definition. 	
	In particular  $Z_p \equiv 0 \mod m\sqrt D \Z$ if and only if $Z(\rho_m(\frob p )) =0$ as an element of $\O_m$. Hence
	\begin{align*}	
		P_{m}(x)& = \frac{ \# \{  p<x \textnormal{ prime}  \mid Z_{p} \equiv  0 \mod m\sqrt D\Z \}}{\pi(x)}\\ & \sim \frac{ \# \{  p<x \textnormal{ prime }  \mid p\nmid mN \textnormal{ and } Z( \rho_m (\frob p)) =0 \}}{\pi(x)} \\
		&\sim   \frac{ \# \{ \sigma \in \G_m  \mid Z(\sigma) = 0 \}}{\# \G_m}\\
		& = \frac{\#\G_m^t}{\#\G_m}. \qedhere
	\end{align*} 
\end{proof}

The following theorem will enable us to give explicit formulas for the cardinalities of $\G_{\ell^k}$ and  $\{\G_{\ell^k} \mid Z(\sigma) = 0\}$ for almost all primes.

Define
$$\widehat\A_\ell = \{\sigma \in GL_2(\widehat\O_\ell) \mid \det \sigma \in \Z_\ell^\times \}.$$
\begin{theorem}[Ribet]\label{theorem:largeimage}
	 \ctext{Let $f$ be a weight $2$ cusp form without inner twists}. Then for all primes $\ell$ the image of the $\ell$-adic representation, $\widehat\G_\ell$, is an open subgroup of $\widehat\A_\ell$.
	 Moreover  $\widehat\G_\ell = \widehat\A_ \ell$ for almost all primes. We say that the prime $\ell$ has large image if the inclusion is an equality, and we say that  $\ell$ is exceptional otherwise.
\end{theorem}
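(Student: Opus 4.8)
The plan is to recover this as a consequence of Ribet's work on the Galois representations attached to modular forms without inner twists; I describe how that argument is organised. Write $\widehat\O_\ell = \O_f\otimes_\Z\Z_\ell = \prod_{\lambda\mid\ell}\O_\lambda$, the product running over the primes $\lambda$ of $\O_f$ above $\ell$, so that $GL_2(\widehat\O_\ell)=\prod_{\lambda\mid\ell}GL_2(\O_\lambda)$, $\widehat\rho_\ell=\prod_{\lambda\mid\ell}\rho_\lambda$ with $\rho_\lambda\colon G_\Q\to GL_2(\O_\lambda)$ the $\lambda$-adic representation, and $\widehat\A_\ell=\prod_{\lambda\mid\ell}\{g\in GL_2(\O_\lambda)\mid\det g\in\Z_\ell^\times\}$. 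Since $f$ has weight $2$ and, by Lemma~\ref{lemma:prel}, trivial nebentypus, $\det\rho_\lambda$ is the $\ell$-adic cyclotomic character; hence $\det$ maps $\widehat\G_\ell$ onto $\Z_\ell^\times$ and $\widehat\G_\ell\subseteq\widehat\A_\ell$ automatically. As $\widehat\A_\ell$ is a compact $\ell$-adic Lie group and $\widehat\G_\ell$ is closed, I would use that $\widehat\G_\ell$ is open in $\widehat\A_\ell$ exactly when the two have the same Lie algebra, and — for the ``almost all'' statement — that for $\ell\ge5$ a closed subgroup of $\widehat\A_\ell$ surjecting onto the reduction $\G_\ell$ modulo $\ell$ is already everything. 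This splits the theorem into: (i) openness for every $\ell$; (ii) full reduction mod $\ell$ for almost all $\ell$.

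For (i) I would first invoke the single-place big-image theorem: because $f$ has no CM, each $\rho_\lambda$ individually has open image in $\{g\in GL_2(\O_\lambda)\mid\det g\in\Z_\ell^\times\}$. The proof of that is the standard one — $\rho_\lambda$ is absolutely irreducible, its Zariski closure is reductive by Faltings, the Borel case is excluded by irreducibility and the torus-normalizer case by the absence of CM, so the closure contains $SL_2$ and openness follows from Bogomolov's theorem. The real point is to pass from each factor being open to $\widehat\G_\ell$ being open in the product $\widehat\A_\ell$, and here the hypothesis of no inner twists enters: an inner twist of $f$ is precisely a relation $\rho_{\lambda'}\cong\rho_\lambda\otimes\chi$ between two Galois-conjugate primes $\lambda,\lambda'$ and a Dirichlet character $\chi$, and its absence makes the images in the distinct factors ``independent'', so that a Goursat-type argument forces $\widehat\G_\ell$ to be open in $\prod_{\lambda\mid\ell}GL_2(\O_\lambda)$ subject only to the determinant constraint. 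This is the content of Ribet's analysis of $\widehat\rho_\ell$; see \cite{Ri80}.

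For (ii) I would take $\ell$ large, unramified in $K_f$ and prime to $N$, and use the modular analogue of Serre's surjectivity theorem (Ribet, Momose): for all but finitely many primes $\lambda$ of $\O_f$ the residual representation $\overline{\rho}_\lambda\colon G_\Q\to GL_2(\F_\lambda)$ is onto $\{g\in GL_2(\F_\lambda)\mid\det g\in\F_\ell^\times\}$. The exclusion, for almost all $\lambda$, of reducible, dihedral and exceptional ($A_4,S_4,A_5$) images is standard — a dihedral image would yield CM or a fixed quadratic-twist relation valid for infinitely many $\lambda$, and the exceptional cases contradict the Hasse bound together with $K_f$ being the true coefficient field. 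Then I would combine the $\overline{\rho}_\lambda$ with $\lambda\mid\ell$ via Goursat's lemma; the only obstruction is an isomorphism, up to a Frobenius twist, between two of them, which is again an inner-twist-type relation and hence can occur for only finitely many $\ell$. For the remaining $\ell$ the product $\prod_{\lambda\mid\ell}\overline{\rho}_\lambda$ is surjective onto $\G_\ell$, and since for $\ell\ge5$ the only closed subgroup of $\prod_{\lambda\mid\ell}SL_2(\O_\lambda)$ mapping onto $\prod_{\lambda\mid\ell}SL_2(\F_\lambda)$ is the whole group, combined with $\det(\widehat\G_\ell)=\Z_\ell^\times$ this gives $\widehat\G_\ell=\widehat\A_\ell$.

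The step I expect to be the main obstacle is the passage from the individual factors to the product in both (i) and (ii): one must convert the purely ``modular'' input that $f$ has no inner twists into a genuine independence statement for the Galois-conjugate $\lambda$-adic (and residual) representations, and in (ii) this independence must be made uniform in $\ell$. The single-place big-image and residual-surjectivity theorems are themselves nontrivial but by now classical (Serre, Swinnerton-Dyer, Faltings, Ribet, Momose); granting them, what remains is the bookkeeping of Goursat obstructions, which is exactly where the no-inner-twist hypothesis is spent.
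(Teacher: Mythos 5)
Your outline is mathematically reasonable, but it should be said up front that the paper does not prove this statement at all: its entire proof is the citation ``this is a special case of \cite[Theorem 0.1]{RibetI}'', since the statement \emph{is} Ribet's theorem. What you have written is essentially an accurate pr\'ecis of how that cited theorem is proved in the literature (Ribet--Momose): the decomposition $\widehat\O_\ell=\prod_{\lambda\mid\ell}\O_\lambda$, the determinant being cyclotomic because the weight is $2$ and the nebentypus trivial, openness of each $\lambda$-adic image via irreducibility, Faltings and Bogomolov, residual surjectivity for almost all $\lambda$, and a Goursat-type argument across the $\lambda\mid\ell$ in which the only obstruction is an isomorphism $\rho_{\lambda'}\cong\rho_\lambda\otimes\chi$, i.e.\ an inner twist. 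So your route differs from the paper only in that you reconstruct the argument rather than quote it; what it ``buys'' is an explanation of where the no-inner-twist hypothesis is spent, at the cost of resting on the same deep inputs you would in any case have to cite, so it is not a self-contained alternative. Two small corrections: the relevant reference is \cite{RibetI} (the paper uses \cite{Ri80} only for the inner-twist criterion in Lemma \ref{lemma:prel0}), and the exclusion of CM is not automatic from ``no inner twists'' under every convention, so if you run the single-place openness argument you should either note the paper's standing hypothesis that $f$ has no CM or explain why CM is ruled out; likewise the exclusion of the exceptional residual images for large $\ell$ needs the standard element-of-large-projective-order argument rather than an appeal to the Hasse bound alone. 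Finally, the uniformity in $\ell$ of the independence statement in your step (ii) is exactly the nontrivial content of Ribet's adelic theorem, so as written your proposal is a faithful roadmap to the cited result rather than a new proof of it --- which, for this statement, is all the paper itself does.
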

\begin{proof}
	This is a special case of \cite[Theorem 0.1]{RibetI}.
\end{proof}
Let $\ell$ be a prime and $k$ a positive integer. Define  $\A_{\ell^k}$ be the image of $\widehat\A_\ell$ under the natural projection modulo $\ell^k$ and $\A_{\ell^k}^t = \{\sigma \in \A_{\ell^k} \mid Z(\sigma) = 0\}$. If $\ell$ is a prime with large image, then $\A_{\ell^k} = \G_{\ell^k}$ so $$F_{\ell^k} = \ell^k\frac{\#\A_{\ell^k}^t}{\# \A_{\ell^k}}.$$ Note that $\widehat\O_\ell \cong \Z_\ell \times \Z_\ell$ if  $\ell$ splits in $\O_f$. If $\ell$ is  inert in $\O_f$ then $\widehat \O_\ell$ is the ring of integers of the unique unramified quadratic extension of $\Q_\ell$, denoted  by $\Z_{\ell^2}$. 
 In the next two sections we describe the cardinalities of  $\A_{\ell^k}$ and  $\A_{\ell^k}^t$ in the inert and split case respectively. For both cases we will need the following lemma and its corollary.
\begin{lemma}\label{lemma:carGl2}
	Let $R$ be a finite local ring with maximal ideal $\m$. Denote $r = \#R$ and $m = \# \m$. Then
	$$ \# GL_{2}(R) = (r^{2}-m^{2})\cdot r \cdot (r-m).$$ 
\end{lemma}
\begin{proof}
	Let $\sigma = \begin{pmatrix} a & b \\ c & d \end{pmatrix} \in M_{2\times 2}(R)$. Then $$\sigma \in GL_{2}(R) \Leftrightarrow ad \not \equiv bc \mod \m.$$ The vector $(a,b)$ can be any vector not contained in $\m ^{2}$. There are $r^{2}-m^{2}$ such vectors. We consider two cases depending on the valuation of $b$ in $R$. 
	
	First, if $b\in \m$, then $a \in R^{\times}$ so
	\begin{align*}
		ad \not \equiv bc \mod \m & \Leftrightarrow d  \not \equiv bc a^{-1} \mod \m \\
		& \Leftrightarrow d \not \equiv 0 \mod \m.
	\end{align*}
	Hence $c$ can be any element of $R$ and $d$ any element of $R^{\times}$. There are $r\cdot (r-m)$ such vectors $(c,d)$.
	
	Second, if $b\not \in \m$, then 
	\begin{align*}
		ad \not \equiv bc \mod \m & \Leftrightarrow adb^{-1}  \not \equiv c \mod \m. 
	\end{align*}
	Hence  $d$ can be any element of $R$ and $c$ any element not contained in $adb^{-1}+\m$. There are $r\cdot (r-m)$ such vectors.
	
	In either case we obtain $r\cdot (r-m)$ possibilities for the second vector. Hence $\#GL_{2}(R) = (r^{2}-m^{2})\cdot r\cdot (r-m)$.
\end{proof}
\begin{corollary}\label{cor:carGl2lk}
Let $\ell$ be a prime and $k$ a positive integer. Then
$$\#GL_2(\Z/\ell^k\Z) = \ell^{4k-3}(\ell^2-1)(\ell-1).$$
\end{corollary}

\subsection{Inert Primes}
Let $f$ be as  above, suppose that $\ell$ is an odd inert prime in $\O_f$ the ring of integers of the coefficient field of $f$. Then 
$$\widehat\O_\ell \cong \O_f \otimes_\Z \Z_\ell \cong \Z_{\ell^2}.$$  Recall that $\Z_{\ell^2}$ is the ring of integers of the unique unramified quadratic extension of $\Q_\ell$.
If $\alpha \in \overline \Q$ with $\alpha^2$ a square-free integer that is congruent to a quadratic non-residue modulo $\ell$, then $\Q_\ell(\alpha)$ is an unramified quadratic extension of $\Q_\ell$ \ctext{with ring of integers $\Z_\ell[\alpha]$} hence
$$\Z_{\ell^2} \cong \Z_\ell [\alpha].$$
Moreover  the morphism $\overline{\, \cdot \,}$ is given by
$$\overline{\, \cdot \,}: \Z_\ell [\alpha] \rightarrow \Z_\ell [\alpha]: a+\alpha b \mapsto a - \alpha  b.$$

If $\ell$ is an inert prime in $\O_f$, then we can take $\alpha = \sqrt D$ with $D$ the square-free integer such that $K_f=\Q(\sqrt D)$.  If moreover the $\ell$-adic representation attached to $f$ has large image in the sense of Theorem \ref{theorem:largeimage} \ctext{the sets $ \G_{\ell^k}$ and  $ \{ \sigma \in \G_{\ell^k}  \mid Z(\sigma) = 0 \} $} are respectively 
\begin{align*}
	\A_{\ell^k,I}   &:= \big\{ \sigma \in GL_2(\Z[\alpha] /\ell^k\Z[\alpha]) \mid \det \sigma \in \Z/\ell^k\Z^\times \big\}\textnormal{ and } \\
	\A_{\ell^k,I}^t &:= \big\{ \sigma \in GL_2(\Z[\alpha] /\ell^k\Z[\alpha]) \mid \det \sigma \in \Z/\ell^k\Z^\times \wedge  \ \tr \sigma \in \Z/\ell^k\Z \big\}.
\end{align*}

\begin{proposition}\label{prop:GLKInert}
Let $\ell$ be an odd prime and $k$ a positive integer. Then
$$\#\A_{\ell^k,I} = \ell^{7k-5}(\ell^4-1)(\ell- 1).$$
\end{proposition}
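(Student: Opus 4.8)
The plan is to compute $\#\A_{\ell^k,I}$ by reducing to the case $k=1$ via the standard fibration over reduction mod $\ell$, and then to count directly over the residue field. Write $R = \Z[\alpha]/\ell^k\Z[\alpha]$, which is a free $\Z/\ell^k\Z$-module of rank $2$; since $\ell$ is inert, $R$ is a local ring with residue field $\F_{\ell^2}$ and maximal ideal $\m = \ell R$, so $\#R = \ell^{2k}$ and $\#\m = \ell^{2(k-1)}$. The key structural point is that $\A_{\ell^k,I}$ is the preimage under the surjection $GL_2(R)\to GL_2(\F_{\ell^2})$ of the subset cut out by the determinant condition $\det\sigma\in(\Z/\ell^k\Z)^\times$; but since $GL_2(R)$ already forces $\det\sigma\in R^\times$, the extra condition is that $\det\sigma$ lies in the subgroup $(\Z/\ell^k\Z)^\times\subset R^\times$.

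First I would count $\#GL_2(R)$ using Lemma \ref{lemma:carGl2} with $r=\ell^{2k}$ and $m=\ell^{2(k-1)}$, giving $\#GL_2(R) = (\ell^{4k}-\ell^{4k-4})\cdot\ell^{2k}\cdot(\ell^{2k}-\ell^{2k-2})$. Next I would analyse the determinant map $\det\colon GL_2(R)\to R^\times$, which is a surjective group homomorphism, hence its fibres all have the same cardinality $\#GL_2(R)/\#R^\times$. Since $\#R^\times = r - m = \ell^{2k}-\ell^{2k-2}$ and $\#(\Z/\ell^k\Z)^\times = \ell^{k-1}(\ell-1)$, the number of allowed determinant values is $\ell^{k-1}(\ell-1)$, so
$$\#\A_{\ell^k,I} = \#(\Z/\ell^k\Z)^\times \cdot \frac{\#GL_2(R)}{\#R^\times} = \ell^{k-1}(\ell-1)\cdot\frac{(\ell^{4k}-\ell^{4k-4})\,\ell^{2k}\,(\ell^{2k}-\ell^{2k-2})}{\ell^{2k}-\ell^{2k-2}}.$$
The remaining step is purely arithmetical simplification: $(\ell^{4k}-\ell^{4k-4}) = \ell^{4k-4}(\ell^4-1)$, and multiplying through by $\ell^{k-1}(\ell-1)\cdot\ell^{2k}$ yields $\ell^{4k-4+2k+k-1}(\ell^4-1)(\ell-1) = \ell^{7k-5}(\ell^4-1)(\ell-1)$, as claimed.

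The one point requiring care — and the main obstacle — is justifying that the determinant map $GL_2(R)\to R^\times$ is surjective with equidistributed fibres: surjectivity is clear (diagonal matrices $\mathrm{diag}(u,1)$), and equidistribution follows because left-multiplication by $\mathrm{diag}(u,1)$ gives a bijection between the fibre over $1$ and the fibre over $u$. I should also double-check that $\det\sigma\in R^\times$ is automatic for $\sigma\in GL_2(R)$ over this local ring (it is, since a matrix over a commutative ring is invertible iff its determinant is a unit), so that $\A_{\ell^k,I}$ is genuinely obtained from $GL_2(R)$ by intersecting the determinant with the subgroup $(\Z/\ell^k\Z)^\times$ rather than by some more delicate condition. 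Once those two observations are in place the computation is routine and the exponent bookkeeping gives the stated formula.
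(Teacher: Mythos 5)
Your proposal is correct and follows essentially the same route as the paper: count $\#GL_2(\Z[\alpha]/\ell^k\Z[\alpha])$ via Lemma \ref{lemma:carGl2} with $r=\ell^{2k}$, $m=\ell^{2k-2}$, then use equidistribution of the determinant over the unit group to cut down to the $\ell^{k-1}(\ell-1)$ admissible determinant values in $(\Z/\ell^k\Z)^\times$, and simplify. One small aside: your opening claim that $\A_{\ell^k,I}$ is the preimage under reduction $GL_2(R)\to GL_2(\F_{\ell^2})$ of a determinant condition is not accurate, since whether $\det\sigma$ lies in $\Z/\ell^k\Z$ is not detected mod $\ell$ for $k\ge 2$ — but you never use this remark, so the argument stands as written.
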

\begin{proof}
 There are $ (\ell^2-1) \ell^{2k-2}$ units $\Z[\alpha]/\ell^k\Z[\alpha]$, of which $(\ell- 1)\ell^{k-1}$  units are embedded in $\Z/\ell^k\Z$. By Lemma \ref{lemma:carGl2} 
$$\#GL_2(\Z[\alpha]/\ell^k \Z[\alpha]) = \ell^{8k-6}(\ell^4-1)(\ell^2-1).$$
Since any unit occurs equally many times as the determinant of a  matrix in $GL_2(\Z[\alpha]/\ell^k\Z[\alpha])$ we obtain
\begin{align*}
\#\A_{\ell^k,I} & = \#GL_2(\Z[\alpha]/\ell^k \Z[\alpha]) \frac{\#\Z/\ell^k\Z^\times}{\#\Z[\alpha]/\ell^k\Z[\alpha]^{\ \times}} \\
& =  \ell^{8k-6}(\ell^4-1)(\ell^2-1) \frac{(\ell- 1)\ell^{k-1}}{(\ell^2-1) \ell^{2k-2}} \\
& = \ell^{7k-5}(\ell^4-1)(\ell-1). \qedhere
\end{align*}
\end{proof}

\begin{proposition}\label{prop:GLKTInert}
Let $\ell$ be an odd prime and $k$ a positive integer. Then
$$\#\A_{\ell^k,I}^t =\frac{(\ell-1)}{(\ell+1)}\ell^{6k-2}\left(\ell^2+\ell +1-\ell^{-2k} \right).$$ 
\end{proposition}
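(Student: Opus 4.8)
The plan is to reparametrise so that the whole count takes place over $\Z/\ell^k\Z$, and then to stratify. Write $R:=\Z[\alpha]/\ell^k\Z[\alpha]$ and $S:=\Z/\ell^k\Z$, so that $R=S\oplus\alpha S$ as $S$-modules and every $\sigma\in M_2(R)$ is uniquely $\sigma=\sigma_0+\alpha\sigma_1$ with $\sigma_0,\sigma_1\in M_2(S)$. From the identity $\det(A+B)=\det A+\det B+\tr A\tr B-\tr(AB)$ for $2\times 2$ matrices together with $\alpha^2=D$ one gets $\tr\sigma=\tr\sigma_0+\alpha\tr\sigma_1$ and $\det\sigma=(\det\sigma_0+D\det\sigma_1)+\alpha\bigl(\tr\sigma_0\tr\sigma_1-\tr(\sigma_0\sigma_1)\bigr)$. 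Since $\{1,\alpha\}$ is an $S$-basis of $R$, the conditions $\tr\sigma\in S$ and $\det\sigma\in S^\times$ are equivalent to $\tr\sigma_1=0$, $\tr(\sigma_0\sigma_1)=0$ and $\det\sigma_0+D\det\sigma_1\in S^\times$ — and this last forces $\det\sigma\in R^\times$, so $\sigma\in GL_2(R)$ automatically. Hence $\#\A_{\ell^k,I}^t$ is the number of pairs $(\sigma_0,\sigma_1)\in M_2(S)^2$ satisfying these three conditions.

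I would then stratify by $r:=v_\ell(\sigma_1)\in\{0,1,\dots,k\}$, the least $\ell$-valuation of an entry of $\sigma_1$ ($r=k$ meaning $\sigma_1=0$). For $r=k$ the conditions collapse to $\sigma_0\in GL_2(S)$, contributing $\#GL_2(\Z/\ell^k\Z)=\ell^{4k-3}(\ell^2-1)(\ell-1)$ by Corollary~\ref{cor:carGl2lk}. For $1\le r\le k-1$, write $\sigma_1=\ell^r\upsilon$ with $\upsilon$ primitive (some entry a unit) of trace zero in $M_2(\Z/\ell^{k-r}\Z)$; then $D\det\sigma_1\equiv 0\pmod\ell$, so the last condition becomes $\bar\sigma_0\in GL_2(\F_\ell)$, while $\tr(\sigma_0\sigma_1)=0$ becomes $\tr(\sigma_0\upsilon)\equiv 0\pmod{\ell^{k-r}}$. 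As $\upsilon$ is primitive the functional $\sigma_0\mapsto\tr(\sigma_0\upsilon)$ is surjective, and successive reductions modulo $\ell$, $\ell^{k-r}$ and $\ell^k$ — all fibres of the expected size, precisely because of primitivity — show that the total number of admissible $(\sigma_0,\sigma_1)$ in this stratum is $\ell^{6k-2r-6}\,\mathcal H$, where
$$\mathcal H:=\#\{(\bar\sigma_0,\bar\tau)\in M_2(\F_\ell)^2\mid \bar\sigma_0\in GL_2(\F_\ell),\ \bar\tau\ne 0,\ \tr\bar\tau=0,\ \tr(\bar\sigma_0\bar\tau)=0\}.$$
For $r=0$ (so $\sigma_1$ primitive) the same kind of reduction yields $\ell^{6k}-\ell^{6k-3}-\ell^{6k-6}\,\Sigma$, where, after re-identifying $(\bar\sigma_0,\bar\sigma_1)\leftrightarrow\bar\sigma_0+\alpha\bar\sigma_1\in M_2(\F_{\ell^2})$,
$$\Sigma:=\#\{\bar\sigma\in M_2(\F_{\ell^2})\mid \tr\bar\sigma\in\F_\ell,\ \det\bar\sigma=0,\ \bar\sigma\notin M_2(\F_\ell)\}.$$

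It remains to evaluate $\mathcal H$ and $\Sigma$ and to add up. For $\mathcal H$, fix $\bar\sigma_0$ and note (trace form) that $\bar\tau\mapsto\tr(\bar\sigma_0\bar\tau)$ vanishes on all trace-zero matrices exactly when $\bar\sigma_0$ is scalar; this gives $\mathcal H=(\ell-1)(\ell^3-1)+\bigl(\#GL_2(\F_\ell)-(\ell-1)\bigr)(\ell^2-1)=\ell(\ell-1)^2(\ell^3+\ell^2-1)$. For $\Sigma$, since $\ell$ is odd every element of $\F_\ell$ is a square in $\F_{\ell^2}$, so a characteristic-polynomial / conjugacy-class count over $\F_{\ell^2}$ gives $\#\{\bar\sigma\in M_2(\F_{\ell^2})\mid\tr\bar\sigma\in\F_\ell,\ \det\bar\sigma=0\}=(\ell-1)\ell^2(\ell^2+1)+\ell^4$ (separating off the nilpotent locus $\tr=\det=0$), hence $\Sigma=\ell(\ell-1)(\ell^3+\ell^2+\ell-1)$. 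Finally $\sum_{r=1}^{k-1}\ell^{6k-2r-6}=(\ell^{6k-6}-\ell^{4k-4})/(\ell^2-1)$, and adding the three contributions
$$\ell^{4k-3}(\ell^2-1)(\ell-1)\ +\ \bigl(\ell^{6k}-\ell^{6k-3}-\ell^{6k-6}\Sigma\bigr)\ +\ \frac{\mathcal H}{\ell^2-1}\bigl(\ell^{6k-6}-\ell^{4k-4}\bigr)$$
and simplifying gives $\#\A_{\ell^k,I}^t=\frac{\ell-1}{\ell+1}\,\ell^{6k-2}\bigl(\ell^2+\ell+1-\ell^{-2k}\bigr)$.

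The main obstacle is the bookkeeping in the intermediate strata $1\le r\le k-1$: through the successive reductions modulo $\ell$, $\ell^{k-r}$ and $\ell^k$ one must verify that the linear functionals involved stay surjective — this is exactly where primitivity of $\upsilon$ is used — so that each fibre has precisely the claimed size; an off-by-one in any exponent would corrupt the final formula. Once the closed forms of $\mathcal H$ and $\Sigma$ are in hand, collecting the three strata and checking the stated identity is routine algebra (and is easily verified directly for small $k$, e.g.\ $k=1,2,3$). A variant computes $\#\A_{\ell^k,I}^t=\sum_{t\in S,\,d\in S^\times}N_R(t,d)$ via conjugacy-class sizes of $2\times2$ matrices over $R$, using that $N_R(t,d)$ depends only on $v_\ell(t^2-4d)$ through the translation $\sigma\mapsto\sigma-\tfrac t2 I$; this meets the same bookkeeping but over the less convenient ring $\mathcal{O}_L/\ell^k\mathcal{O}_L$ with $L=\Q_{\ell^2}$.
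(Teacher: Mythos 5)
Your proposal is correct, and it takes a genuinely different route from the paper. The paper works entry-by-entry over $\Z[\alpha]/\ell^k\Z[\alpha]$: it stratifies by the valuation of the off-diagonal entry $b$, proves an auxiliary lemma counting the admissible pairs $(a,d)$ and the exactly $\ell^{k+r}$ admissible $c$'s for each such $b$, and then evaluates several telescoping sums; the inertness of $\ell$ enters through a residue/non-residue argument inside that lemma. You instead use the module decomposition $\sigma=\sigma_0+\alpha\sigma_1$, rewrite the defining conditions as $\tr\sigma_1=0$, $\tr(\sigma_0\sigma_1)=0$, $\det\sigma_0+D\det\sigma_1\in(\Z/\ell^k\Z)^\times$ (your polarization identity for $\det$ and the observation that the unit condition makes invertibility automatic are both correct), stratify by the valuation $r$ of $\sigma_1$, and reduce each stratum to a residue-field count by uniform lifting. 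I checked the delicate points: writing $\sigma_1=\ell^r\upsilon$ with $\upsilon$ primitive over $\Z/\ell^{k-r}\Z$, the functionals $\upsilon\mapsto\tr\upsilon$ and $\sigma_0\mapsto\tr(\sigma_0\upsilon)$ are indeed surjective, so every admissible mod-$\ell$ point has exactly $\ell^{6(k'-1)}$ admissible lifts (with $k'=k-r$), giving the stratum count $\ell^{6k-2r-6}\mathcal H$ for $1\le r\le k-1$ and $\ell^{6k}-\ell^{6k-3}-\ell^{6k-6}\Sigma$ for $r=0$; your closed forms $\mathcal H=\ell(\ell-1)^2(\ell^3+\ell^2-1)$ (via the trace form, valid since $\ell$ is odd) and $\Sigma=\ell(\ell-1)(\ell^3+\ell^2+\ell-1)$ are correct; and the final assembly of the three strata with the geometric sum does simplify to $\frac{\ell-1}{\ell+1}\ell^{6k-2}(\ell^2+\ell+1-\ell^{-2k})$ (I verified the identity in closed form, and numerically for small $\ell,k$). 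What your approach buys is cleaner bookkeeping: all dependence on $k$ is isolated in uniform lifting and one geometric series, the inertness hypothesis is used only through the identification of the mod-$\ell$ quotient with $M_2(\F_{\ell^2})$ in the $r=0$ stratum, and the arithmetic content is concentrated in two transparent finite-field counts; the paper's argument is more elementary but requires the longer case analysis of its Lemma on pairs $(a,d)$ and heavier telescoping computations. The price of your route is exactly the surjectivity/fibre-size verifications you flag, which do hold as stated.
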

\begin{proof}
See Appendix \ref{app:GLKTInert}.
\end{proof}

\subsection{Split Primes}
Let $f$ be as above, suppose that $\ell$ is an odd split prime in $\O_f$. Then 
$$\widehat\O_\ell \cong \Z_\ell \times \Z_\ell$$ and the morphism induced by the unique non-trivial element of the Galois group of $K_f$ over $\Q$ by the tensor product over $\Z$ with $\Z_\ell$ is 
$$\overline{\, \cdot \,} : \Z_\ell \times \Z_\ell \rightarrow \Z_\ell \times \Z_\ell: (a,b) \mapsto (b,a).$$
In particular the embedding of $\Z$ into $\widehat\O_\ell$ is diagonal. Suppose that $\ell$ has large image in the sense of Theorem \ref{theorem:largeimage}. Then the image of the Galois representation modulo $\ell^k$ and its subset \ctext{$\{\sigma \in \G_{\ell^k} \mid \tr \sigma \in \Z/\ell^k\Z \}$} are respectively
\begin{align*}
\A_{\ell^k,S} &:= \{(\tau, \tau') \in GL_{2}(\Z/\ell^k \Z)^2 \mid   \det \tau = \det \tau' \} \textnormal{ and} \\
\A_{\ell^k,S}^t & := \{(\tau, \tau') \in GL_{2}(\Z/\ell^k \Z)^2 \mid   \textnormal{char. poly.}\ \tau = \textnormal{char. poly.}\ \tau' \}, 
\end{align*}  
where char. poly. $\tau = X^2 - \tr \tau X + \det \tau$ denotes the  characteristic polynomial of  $\tau$.

\begin{proposition}\label{prop:GLKSplit}
Let $\ell$ be an odd prime and $k$ a positive integer. Then 
$$\#\A_{\ell^k,S} = \ell^{7k-5}(\ell^2-1)^2(\ell-1).$$
\end{proposition}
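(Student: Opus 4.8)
The plan is to count the set
$$\A_{\ell^k,S} = \{(\tau,\tau') \in GL_2(\Z/\ell^k\Z)^2 \mid \det\tau = \det\tau'\}$$
by the standard fibering trick: the determinant map $GL_2(\Z/\ell^k\Z) \to (\Z/\ell^k\Z)^\times$ is surjective and, since it is a group homomorphism, all of its fibers have the same cardinality. Hence for each unit $d$ the set $\{\tau \in GL_2(\Z/\ell^k\Z) \mid \det\tau = d\}$ has exactly $\#GL_2(\Z/\ell^k\Z)/\#(\Z/\ell^k\Z)^\times$ elements, independently of $d$.

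The key computation is then just a sum over the common determinant value. Writing $N_d$ for the common fiber size, we get
\begin{align*}
\#\A_{\ell^k,S} &= \sum_{d \in (\Z/\ell^k\Z)^\times} N_d^2 = \#(\Z/\ell^k\Z)^\times \cdot \left(\frac{\#GL_2(\Z/\ell^k\Z)}{\#(\Z/\ell^k\Z)^\times}\right)^2 = \frac{\#GL_2(\Z/\ell^k\Z)^2}{\#(\Z/\ell^k\Z)^\times}.
\end{align*}
Now I substitute the known cardinalities: $\#(\Z/\ell^k\Z)^\times = \ell^{k-1}(\ell-1)$, and by Corollary \ref{cor:carGl2lk}, $\#GL_2(\Z/\ell^k\Z) = \ell^{4k-3}(\ell^2-1)(\ell-1)$. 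Squaring the latter and dividing by the former gives
$$\#\A_{\ell^k,S} = \frac{\ell^{8k-6}(\ell^2-1)^2(\ell-1)^2}{\ell^{k-1}(\ell-1)} = \ell^{7k-5}(\ell^2-1)^2(\ell-1),$$
which is exactly the claimed formula.

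There is essentially no obstacle here: the only thing to be careful about is justifying that the fibers of $\det$ are equinumerous, which is immediate because $\det$ is a surjective homomorphism of finite groups, so left-translation by any preimage of $d_1 d_2^{-1}$ gives a bijection between the fiber over $d_2$ and the fiber over $d_1$. Everything else is arithmetic with the already-established value of $\#GL_2(\Z/\ell^k\Z)$. (One could alternatively avoid Corollary \ref{cor:carGl2lk} and instead invoke Lemma \ref{lemma:carGl2} directly with $R = \Z/\ell^k\Z$, $r = \ell^k$, $m = \ell^{k-1}$, obtaining the same count of $GL_2$; either route works and the split case is genuinely the easiest of the three propositions.)
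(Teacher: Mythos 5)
Your proof is correct and follows essentially the same route as the paper: the paper also uses equidistribution of the determinant (i.e., equal fiber sizes) to write $\#\A_{\ell^k,S}$ as $\#GL_2(\Z/\ell^k\Z)^2\cdot\#(\Z/\ell^k\Z)^\times/\#((\Z/\ell^k\Z)^\times)^2$, which is the same quantity as your $(\#GL_2(\Z/\ell^k\Z))^2/\#(\Z/\ell^k\Z)^\times$, and then plugs in Corollary \ref{cor:carGl2lk}. Your explicit fibering sum and the coset/translation justification merely spell out the equidistribution step the paper asserts.
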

\begin{proof}
The determinant is equidistributed  in the units of $(\Z/\ell^k\Z)^2$.  By Corollary \ref{cor:carGl2lk} $\#GL_2(\Z/\ell^k\Z)^2 = \big(\ell^{4k-3}(\ell^2-1)(\ell-1)\big)^2$. Moreover there are $\big(\ell^{k-1}(\ell-1)\big)^2$ units in $(\Z/\ell^k\Z)^2$ and $\ell^{k-1}(\ell-1)$ are contained in $\Z/\ell^k \Z$. Hence 
\begin{align*}
\#\A_{\ell^k,S} & =\#GL_2(\Z/\ell^k\Z)^2 \frac{\# \Z/ \ell^k\Z^\times}{\#(\Z/\ell^k\Z^\times)^2} \\
&= \big(\ell^{4k-3}(\ell^2-1)(\ell-1)\big)^2 \frac{ \ell^{k-1}(\ell-1)}{\big(\ell^{k-1}(\ell-1)\big)^2} \\
& = \ell^{7k-5}(\ell^2-1)^2(\ell-1). \qedhere
\end{align*}
\end{proof}
\begin{proposition}\label{prop:GLKTSplit}
Let $\ell$ be an odd prime and $k$ a positive integer. Then
$$\# \A_{\ell^k,S}^t = \frac{(\ell-1)}{(\ell+1)} \ell^{6k-4} \left( \ell^4+\ell^3-\ell^2-2 \ell- \ell^{-2k+2} \right).$$
\end{proposition}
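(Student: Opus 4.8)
The plan is to count the pairs $(\tau,\tau') \in GL_2(\Z/\ell^k\Z)^2$ having the same characteristic polynomial, by grouping them according to the common characteristic polynomial $P(X) = X^2 - tX + d$ with $d \in (\Z/\ell^k\Z)^\times$ and $t \in \Z/\ell^k\Z$. If $N(P)$ denotes the number of matrices in $GL_2(\Z/\ell^k\Z)$ with characteristic polynomial $P$, then
$$\#\A_{\ell^k,S}^t = \sum_{P} N(P)^2 = \sum_{d \in (\Z/\ell^k\Z)^\times} \sum_{t \in \Z/\ell^k\Z} N(X^2-tX+d)^2.$$
So the crux is an explicit formula for $N(P)$, and this will depend on the factorization type of $P$ modulo $\ell$ and, more delicately, on the $\ell$-adic valuation of its discriminant $\Delta = t^2 - 4d$.

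First I would reduce the count of matrices with a given characteristic polynomial to the size of a centralizer via the orbit–stabilizer theorem: $N(P)$ equals $\#GL_2(\Z/\ell^k\Z)$ divided by the size of the centralizer of any matrix with characteristic polynomial $P$, summed over conjugacy classes sharing that characteristic polynomial. Over the local ring $\Z/\ell^k\Z$ one must be careful — unlike over a field, distinct conjugacy classes can share a characteristic polynomial (e.g. scalar versus non-scalar matrices when $\Delta$ is highly divisible by $\ell$). The clean way is to stratify by $v := \min\{v_\ell(\Delta), k\}$ (where $v_\ell$ is the $\ell$-adic valuation), and for each stratum classify the matrices: when $v = 0$ the matrix is regular semisimple and $N(P)$ is governed by whether $\Delta$ is a square mod $\ell$ (split versus inert/non-split case), while for larger $v$ one analyses matrices of the form (scalar) $+ \ell^j(\text{nilpotent-ish})$. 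Then for each fixed $d$, I would sum $N(X^2-tX+d)^2$ over $t$ by counting how many $t$ give each value of $v_\ell(t^2-4d)$ — this is a Gauss-sum / quadratic-residue count over $\Z/\ell^k\Z$, using $\ell$ odd so that $t \mapsto t^2-4d$ is well understood.

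The main obstacle will be the bookkeeping in the ramified stratum, i.e. handling the matrices whose characteristic polynomial has discriminant divisible by a positive power of $\ell$, since there the centralizers grow and several conjugacy classes collapse onto the same $P$; getting the combinatorics of centralizer sizes right across all $j \le k$ and then resumming the geometric-type series in $\ell^{-j}$ to land exactly on the stated closed form $\frac{\ell-1}{\ell+1}\ell^{6k-4}(\ell^4 + \ell^3 - \ell^2 - 2\ell - \ell^{-2k+2})$ is where the real work lies. As a sanity check along the way I would verify the $k=1$ case directly against a hand count of pairs of matrices in $GL_2(\F_\ell)$ with equal characteristic polynomial, and confirm consistency with Proposition \ref{prop:GLKSplit} by checking that $\#\A_{\ell^k,S}^t / \#\A_{\ell^k,S}$ behaves like $\ell^{-k}$ times a bounded factor, matching the shape of $F_{\ell^k}$ needed in Lemma \ref{lemma:chebdens}. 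Given the parallel structure, I expect the argument to mirror the inert case (Proposition \ref{prop:GLKTInert}) closely enough that much of it can be deferred to the appendix.
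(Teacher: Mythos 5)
Your skeleton coincides with the paper's: write $\#\A_{\ell^k,S}^t=\sum_P N(P)^2$ over monic quadratics $P=X^2-tX+d$ with $d$ invertible, and stratify by $i=\min\{v_\ell(\Delta),k\}$ together with whether $\Delta$ is a square; the paper's Lemma \ref{lemma:Plk} is exactly your quadratic-residue count of polynomials per stratum, and Lemma \ref{lemma:Mlk} is your $N(P)$. Where you diverge is the engine for $N(P)$: you propose orbit--stabilizer via centralizers, which over $\Z/\ell^k\Z$ forces a classification of all conjugacy classes sharing a given characteristic polynomial, and these proliferate precisely in the ramified strata you identify as the hard part. The paper sidesteps conjugacy classes entirely with a direct parametrization: a matrix with entries $a,b,c,d$ has characteristic polynomial $X^2-TX+S$ iff $d=T-a$ and $a$ is a root of $X^2-TX+(S+bc)$, so $N(P)=\sum_{(b,c)}\#\{\textnormal{roots of } X^2-TX+(S+bc)\}$, and the root count is supplied by an elementary lemma (Lemma \ref{lemma:DiscModlk}) purely in terms of the shifted discriminant $\Delta-4bc$. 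This reduction to counting roots of quadratics modulo $\ell^k$ is what makes the stratified bookkeeping tractable, and it also yields the fact—implicitly assumed in your plan—that $N(P)$ depends only on the stratum $(i,j)$ and not on $P$ itself, which is what legitimizes collapsing the sum to $\sum_{i,j}\#P_{i,j}\,(\#M_{i,j})^2$.

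That said, as written your proposal is a plan rather than a proof: the entire quantitative content—the per-stratum polynomial counts, the explicit values of $N(P)$ in every stratum including the boundary case $\Delta=0$ (where the answer depends on the parity of $k$), and the final resummation of the resulting geometric-type sums to the closed form—is deferred, and that is where essentially all of the difficulty of Proposition \ref{prop:GLKTSplit} lies. If you pursue the centralizer route you will additionally need the conjugacy classification of $GL_2(\Z/\ell^k\Z)$ for non-separable reductions, which is substantially more work than the paper's root-counting shortcut; I would recommend adopting the latter before attempting the bookkeeping.
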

\begin{proof}
See Appendix \ref{app:GLKTSplit}.
\end{proof}

\subsection{The limit of $F_m$}
In this section we describe the behaviour of the factor $F_m$ and its limit by divisibility.

\begin{lemma}\label{lemma:FlkFinte}
Let $\ell$ be  a prime. Then 
\begin{align*}
	\widehat F_\ell &:=  \lim_{k\rightarrow \infty} \ell^k\frac{\#\G_{\ell^k}^t}{\#\G_{\ell^k}} <\infty.
	\intertext{Moreover if $\ell$ is odd, unramified and has large image, }
	\widehat F_\ell &=
	\begin{dcases} 
		{ \frac{\ell^3(\ell^2+\ell+1)}{(\ell+1)(\ell^4-1)} } & \textnormal{ if } \ell \textnormal{ is inert} \\ { \frac{\ell^2(\ell^3+\ell^2 -\ell-2)}{(\ell^2-1)^2 (\ell+1)} }&\textnormal{ if  } \ell \textnormal{ is split.}   
	\end{dcases}
\end{align*}
\end{lemma}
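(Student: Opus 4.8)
The plan is to compute the limit $\widehat F_\ell = \lim_{k\to\infty}\ell^k \#\G_{\ell^k}^t/\#\G_{\ell^k}$ directly from the cardinality formulas already established, treating the odd unramified large-image case first and then handling the remaining (finitely many, by Theorem \ref{theorem:largeimage}) exceptional or ramified primes by a softer argument. First I would observe that for an odd prime $\ell$ of large image, $\G_{\ell^k}=\A_{\ell^k}$, so $F_{\ell^k}=\ell^k\#\A_{\ell^k}^t/\#\A_{\ell^k}$ with $\A_{\ell^k}=\A_{\ell^k,I}$ or $\A_{\ell^k,S}$ according to whether $\ell$ is inert or split. Then I would simply substitute the four explicit formulas from Propositions \ref{prop:GLKInert}, \ref{prop:GLKTInert}, \ref{prop:GLKSplit} and \ref{prop:GLKTSplit}.

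In the inert case this gives
\begin{align*}
F_{\ell^k} &= \ell^k\cdot \frac{\frac{\ell-1}{\ell+1}\ell^{6k-2}\bigl(\ell^2+\ell+1-\ell^{-2k}\bigr)}{\ell^{7k-5}(\ell^4-1)(\ell-1)}
= \frac{\ell^3\bigl(\ell^2+\ell+1-\ell^{-2k}\bigr)}{(\ell+1)(\ell^4-1)},
\end{align*}
and letting $k\to\infty$ the term $\ell^{-2k}$ vanishes, yielding $\widehat F_\ell=\dfrac{\ell^3(\ell^2+\ell+1)}{(\ell+1)(\ell^4-1)}$. In the split case the same substitution gives
\begin{align*}
F_{\ell^k} &= \ell^k\cdot\frac{\frac{\ell-1}{\ell+1}\ell^{6k-4}\bigl(\ell^4+\ell^3-\ell^2-2\ell-\ell^{-2k+2}\bigr)}{\ell^{7k-5}(\ell^2-1)^2(\ell-1)}
= \frac{\ell\bigl(\ell^4+\ell^3-\ell^2-2\ell-\ell^{-2k+2}\bigr)}{(\ell^2-1)^2(\ell+1)},
\end{align*}
and as $k\to\infty$ the correction $\ell^{-2k+2}\to 0$; factoring $\ell$ out of the polynomial $\ell^4+\ell^3-\ell^2-2\ell=\ell(\ell^3+\ell^2-\ell-2)$ produces $\widehat F_\ell=\dfrac{\ell^2(\ell^3+\ell^2-\ell-2)}{(\ell^2-1)^2(\ell+1)}$, as claimed. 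These are routine simplifications, so I would present them compactly.

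The only genuine content is the finiteness claim $\widehat F_\ell<\infty$ for \emph{every} prime $\ell$, including $\ell=2$, the ramified primes, and the exceptional primes where $\G_{\ell^k}\subsetneq \A_{\ell^k}$. Here I cannot use the explicit formulas. Instead I would argue as follows: by Theorem \ref{theorem:largeimage}, $\widehat\G_\ell$ is an \emph{open} subgroup of $\widehat\A_\ell$, hence of finite index, say $[\widehat\A_\ell:\widehat\G_\ell]=c_\ell<\infty$; moreover $\widehat\G_\ell$ contains $\ker(\widehat\A_\ell\to\A_{\ell^{k_0}})$ for some $k_0$ (an open subgroup of the profinite group $\widehat\A_\ell$ contains a principal congruence subgroup). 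For $k\ge k_0$ the quotient map $\G_{\ell^k}\to\G_{\ell^{k_0}}$ has the same fibres as $\A_{\ell^k}\to\A_{\ell^{k_0}}$ over the image, so $\#\G_{\ell^k}=c_\ell^{-1}\#\A_{\ell^k}$ for $k$ large, and likewise $\#\G_{\ell^k}^t\le \#\A_{\ell^k}^t$ trivially while $\#\G_{\ell^k}^t\ge \#\G_{\ell^k}\cdot(\text{positive fraction})$ is controlled; in any case $\ell^k\#\G_{\ell^k}^t/\#\G_{\ell^k}$ is, up to the bounded factor $c_\ell$ and finitely many initial terms, comparable to $\ell^k\#\A_{\ell^k}^t/\#\A_{\ell^k}$, whose limit we have just shown is finite when $\ell$ is odd and unramified; for the finitely many remaining $\ell$ (namely $\ell=2$ and ramified $\ell$) one computes $\#\A_{\ell^k,\ast}^t$ by the same trace-counting method as in the appendices (or simply bounds $\#\A_{\ell^k}^t = O(\#\A_{\ell^k}/\ell^k)$, which is all that is needed) and concludes $\widehat F_\ell<\infty$. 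I expect this finiteness/comparison step — pinning down that the exceptional-image quotient changes $F_{\ell^k}$ only by a bounded multiplicative constant and that the ramified and $\ell=2$ traces still satisfy $\#\A_{\ell^k}^t=O(\ell^{-k}\#\A_{\ell^k})$ — to be the main obstacle; the large-image formulas themselves are pure bookkeeping.
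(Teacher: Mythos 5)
Your computation of $\widehat F_\ell$ for odd unramified primes with large image is exactly the paper's: substitute Propositions \ref{prop:GLKInert}--\ref{prop:GLKTSplit} and let $k\to\infty$; that part is fine. Your reduction of the exceptional primes to $\A_{\ell^k}$ via the open-image statement of Theorem \ref{theorem:largeimage} (finite index of $\widehat\G_\ell$ in $\widehat\A_\ell$, together with $\G_{\ell^k}^t\subset\A_{\ell^k}^t$) is also the paper's second step.

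The genuine gap is the estimate you defer at the end, which is in fact the bulk of the paper's proof: a bound $\ell^k\#\A_{\ell^k}^t/\#\A_{\ell^k}=O(1)$ valid for \emph{every} prime $\ell$, in particular $\ell=2$ and the ramified primes, where the split/inert formulas do not exist. Your suggestion to ``redo the trace-counting of the appendices'' does not go through as stated: the proof of Proposition \ref{prop:GLKTInert} uses that $\ell$ is odd (divisions by $2$, Hensel-type zero counts in Lemma \ref{lemma:DiscModlk}) and crucially that $\alpha^2$ is a quadratic \emph{non-residue} modulo $\ell$, which fails precisely at ramified primes (there $\alpha^2\equiv 0$), and the split description is unavailable there; so a new argument is needed, not a transcription. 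The paper avoids all case distinctions with a single elementary count: writing $\O_{\ell^k}\cong(\Z/\ell^k\Z[X])/(X^2-D)$ for any $\ell$, it embeds $\A_{\ell^k}^t$ into the set $M_{\ell^k}^t$ of matrices with trace and determinant in $\Z/\ell^k\Z$, identifies $M_{\ell^k}^t$ with $7$-tuples satisfying one relation, stratifies by $t=\textnormal{min}\{k,\v(a_2),\v(b_2),\v(c_2)\}$ to get $\#\A_{\ell^k}^t\le\#M_{\ell^k}^t\le 3\ell^{6k}$, and exhibits enough matrices with determinant in $(\Z/\ell^k\Z)^\times$ to get $\#\A_{\ell^k}\ge\ell^{7k-3}(\ell-1)^2$, whence $\ell^k\#\A_{\ell^k}^t/\#\A_{\ell^k}\le 4\ell^2$ uniformly in $k$ for all $\ell$. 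Without this (or an equivalent) bound, your finiteness claim for $\ell=2$, ramified $\ell$, and a priori any exceptional prime whose residue characteristic is one of these, is unproven; you correctly identified it as the main obstacle but did not supply it.
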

\begin{proof}
First we show that $\widehat F_\ell$ is finite if $\ell$ is a prime with large image by deducing an upper bound on $\# \A_{\ell^k}^t$ and a lower  bound on $\#\A_{\ell^k}$.

Let $D$ be the positive square-free integer such that $K_f = \Q(\sqrt D)$. Then $\O_f = \Z[X]/(X^2 -D)$ and 
$$\O_ {\ell^k}  \cong (\Z/\ell^k \Z[X])/(X^2 -D).$$
In particular we can embed $\A_{\ell^k }^t$ into 
$$M_{\ell^k}^t :=\Big\{ \sigma \in M_{2\times 2}\big((\Z/\ell^k \Z[X])/(X^2-D)\big) \mid \tr \sigma \in \Z/\ell^k\Z \textnormal{ and } \det \sigma  \in \Z/\ell^k \Z \Big\}.$$
We deduce an upper bound on the size of the latter set. Let $a_1$, $a_2$, ...  and $d_2$ be elements of $ \Z/\ell^k \Z$ and  $\sigma = \begin{pmatrix} a_1 + Xa_2 & b_1 + Xb_2 \\ c_1 + Xc_2 & d_1 + Xd_2\end{pmatrix}$ then 
\begin{align*}
\tr \sigma \in \Z/\ell^k\Z & \Leftrightarrow a_2+d_2 = 0 \textnormal{ and}\\
\det \sigma \in \Z/\ell^k \Z & \Leftrightarrow a_1d_2 +a_2d_1 - b_1c_2 - b_2c_1 = 0.
\end{align*}
In particular there is a bijection of sets between  $ M_{\ell^k}^t$ and the 7-tuples $(a_1,a_2,...,d_1)$ satisfying
\begin{equation}\label{eq:Mlk}
 a_1a_2 - a_2d_1 +b_1c_2 + b_2c_1 = 0.
\end{equation}
Denote by $\v$ the $\ell$-adic valuation on $\Z/\ell^k \Z$. Let $t=\textnormal{min}\{k,\v(a_2),\v(b_2),\v(c_2)\}$. If $t= k $, then \eqref{eq:Mlk} holds for  any $a_1$, $b_1$, $c_1$ and $d_1$ in $\Z/\ell^k\Z$. So there are at most $\ell^{4k}$ matrices in $M_{\ell^k}^t$ with $a_2 = b_2 = c_2 = 0$.

Suppose that  $ t$ is strictly smaller than $k$. Take $a_2'$, $b_2'$ and $c_2'$   such that $a_2 \ell^t= a_2'\ell^t$, $b_2 = b_2'\ell^t$ and $c_2 = c_2'\ell ^t$. By construction at least one of $a_2'$, $b_2'$ or $c_2'$ is invertible in $\Z/\ell^{k-t}\Z$. Suppose that  $a_2'$ is a unit. Then
\begin{align*}
\eqref{eq:Mlk} \Leftrightarrow d_1 \equiv a_2'^{-1}\big(a_1a_2'+b_1c_2'+b_2'c_1 \big) \mod \ell^{k-t}. 
\end{align*}
Hence for every $0\le t < k$ there are at most $\ell^{6k-2t-1}(\ell-1)$ matrices in $M_{\ell^k}^t$ with $\v(a_2) = t$, $\v(b_2)\ge t$ and  $\v(c_2)\ge t$. If $b_2'$ or $c_2'$ is invertible we  obtain at most $\ell^{6k-2t-1}(\ell-1)$ matrices in $M_{\ell^k}^t$ by a similar argument. So for every $t<k$ there are at most $3\ell^{6k-2t-1}(\ell-1)$ matrices in $M_{\ell^k }^t$ with $t=\textnormal{min}\{\v(a_2),\v(b_2),\v(c_2)\}$. Summing over all $t$  yields
\begin{align*}
\#\A_{\ell^k}^t & \le \#M_{\ell^k}^t \le \ell^{4k} + 3 \sum_{t=0}^{k-1} \ell^{6k-2t-1}(\ell-1) \\
& = \ell^{4k} +3(\ell- 1)\ell^{4k-1}\big(\ell^{2k} + \ell^{2k-2} + \cdots + \ell^2 \big) \\
& = \ell^{4k}\left(1+3(\ell-1)\ell\frac{(\ell^{2k} - 1)}{(\ell^2-1)} \right) \\
& \le 3\ell^{6k}.
\end{align*}

Next we deduce a lower bound on $\#\A_{\ell^k}$. Let $u$ and $v$ be  units in $\Z/\ell^k \Z$,$b$ and $c$ elements of $(\Z/\ell^k\Z[X])/(X^2 -D)$ and  $m \in \ell\Z/\ell^k\Z$. Then $a = (v+mX)$ is a unit in $(\Z/\ell^k\Z[X])/(X^2 - D)$. Indeed the inverse is given by $(v^2 -m^2 D)^{-1}(v-mX)$. If $d = (u+bc)a^{-1}$ the matrix  $\sigma = \begin{pmatrix} a & b \\c & d \end{pmatrix}$   has determinant $u$ so belongs to $\A_{\ell^k}$.   In particular there are at least $\ell^{7k-3}(\ell-1)^2$ elements in $\A_{\ell^k}$. Using the upper bound on $\#\A_{\ell^k}^t$ and the lower bound on $\#\A_{\ell^k}$ we obtain
$$\widehat F_\ell = \lim_{k\rightarrow \infty} \ell^k \frac{\#\A_{\ell^k}^t}{\#A_{\ell^k}} \le\lim_{k\rightarrow \infty} \ell^k \frac{3\ell^{6k+1}}{\ell^{7k-3}(\ell^2-1)} \le 4\ell^2.$$

If  $\ell$ is an exceptional prime, then $\widehat\G_\ell$ is an open subgroup of $\widehat\A_\ell$ by Theorem \ref{theorem:largeimage}. So that 
	\begin{align*}
		\lim_{k\rightarrow \infty} \frac{\#\A_{\ell^k}}{\#\G_{\ell^k}}  & = \# \left(\widehat\A_\ell/\widehat\G_\ell \right)< \infty. \\
		\intertext{Moreover, $\G_{\ell^k}^t \subset \A_{\ell^k}^t$ so}
		\widehat F_\ell = \lim_{k\rightarrow \infty} \ell^k\frac{\#\G_{\ell^k}^t}{\#\G_{\ell^k}} &\le \lim_{k\rightarrow \infty} \ell^k\frac{\#\A_{\ell^k}^t}{\#\G_{\ell^k}} \\
		& = \lim_{k\rightarrow \infty} \ell^k \frac{\#\A_{\ell^k}^t}{\#\A_{\ell^k}} \cdot \lim_{k\rightarrow \infty} \frac{\#\A_{\ell^k}} {\#\G_{\ell^k}} \\
		&< \infty.
	\end{align*}
Finally from  Propositions \ref{prop:GLKInert} and \ref{prop:GLKSplit}  we obtain
	\begin{align*}
		\#\A_{\ell^k} & = \begin{cases} \ell^{7k-5}(\ell^4-1)(\ell-1) & \textnormal{ if } \ell \textnormal{ is inert} \\ \ell^{7k-5}(\ell^2-1)^2(\ell-1)  &\textnormal{ if  } \ell \textnormal{ is split.}   \end{cases}
		\intertext{Moreover from Propositions \ref{prop:GLKTInert} and \ref{prop:GLKTSplit}}
		\#\A_{\ell^k}^t & = \begin{cases} \frac{\ell-1}{\ell+1} \ell^{6k-2}\left
		( \ell^2 + \ell + 1 - \ell^{-2k}\right) & \textnormal{ if } \ell \textnormal{ is inert} \\ \frac{\ell-1}{\ell+1} \ell^{6k-4}\left(\ell^4 +\ell^3 - \ell^2 - 2\ell - \ell^{-2k+2}\right)  &\textnormal{ if  } \ell \textnormal{ is split.}   \end{cases}
		\intertext{So that}
		\lim_{k \rightarrow \infty} \ell^k \frac{\#\A_{\ell^k}^t}{\#\A_{\ell^k}} & =\begin{dcases} \frac{\ell^3(\ell^2+\ell+1)}{(\ell+1)(\ell^4-1)}  & \textnormal{ if } \ell \textnormal{ is inert} \\ \frac{\ell^2(\ell^3+\ell^2 -\ell-2)}{(\ell^2-1)^2 (\ell+1)} &\textnormal{ if  } \ell \textnormal{ is split.}   \end{dcases} \qedhere
	\end{align*}
\end{proof}

\begin{corollary}
Let $f$ be as above. Then the  limit of $F_m$ by divisibility exists, i.e.,
$$\widehat F := \limd{m} F_m < \infty.$$
\end{corollary}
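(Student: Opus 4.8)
The plan is to reduce the statement about the limit-by-divisibility of $F_m$ to the product structure that the representations already provide. First I would observe that for any positive integer $m$ with prime factorization $m = \prod_\ell \ell^{k_\ell}$, the Chinese Remainder Theorem gives $\O_m \cong \prod_\ell \O_{\ell^{k_\ell}}$, and correspondingly $\G_m$ is a subgroup of $\prod_\ell \G_{\ell^{k_\ell}}$; by the independence of the mod-$\ell^{k_\ell}$ representations for distinct $\ell$ (a standard consequence of the fact that $\widehat\rho = \prod_\ell \widehat\rho_\ell$ together with Goursat-type arguments, or simply assumed implicitly in this setting) one in fact has equality $\G_m = \prod_\ell \G_{\ell^{k_\ell}}$ for $m$ divisible by a suitable fixed integer, or at least the relevant ratios multiply. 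Since $Z(\sigma) = 0$ is a condition that can be checked component-by-component (the morphism $\overline{\,\cdot\,}$ and the trace both respect the CRT decomposition), we get $\#\G_m^t = \prod_\ell \#\G_{\ell^{k_\ell}}^t$ and hence
\begin{align*}
F_m = m\frac{\#\G_m^t}{\#\G_m} = \prod_\ell \ell^{k_\ell}\frac{\#\G_{\ell^{k_\ell}}^t}{\#\G_{\ell^{k_\ell}}}.
\end{align*}

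Next I would take the limit over $m$ by divisibility. Fixing an enumeration of the primes, letting $m \to \infty$ by divisibility forces each exponent $k_\ell \to \infty$, so each local factor tends to $\widehat F_\ell$ by Lemma \ref{lemma:FlkFinte}, and formally $\widehat F = \prod_\ell \widehat F_\ell$. To make this rigorous as an infinite product I would check two things: convergence of the product, and that the tail contributions are controlled uniformly so the limit and the product can be interchanged. For the unramified large-image primes $\ell$, Lemma \ref{lemma:FlkFinte} gives the explicit formula, and a short estimate shows $\widehat F_\ell = 1 + O(\ell^{-2})$ (the inert formula $\frac{\ell^3(\ell^2+\ell+1)}{(\ell+1)(\ell^4-1)}$ and the split formula $\frac{\ell^2(\ell^3+\ell^2-\ell-2)}{(\ell^2-1)^2(\ell+1)}$ both expand to $1 + O(\ell^{-2})$), so $\sum_\ell |\widehat F_\ell - 1| < \infty$ and the infinite product converges absolutely. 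There are only finitely many exceptional or ramified primes (Theorem \ref{theorem:largeimage}), and for each of those $\widehat F_\ell < \infty$ is already established in Lemma \ref{lemma:FlkFinte}; these contribute a finite constant factor.

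For the interchange of limit and product I would argue as follows: write $F_m = \big(\prod_{\ell \le B} \ell^{k_\ell}\tfrac{\#\G_{\ell^{k_\ell}}^t}{\#\G_{\ell^{k_\ell}}}\big)\cdot\big(\prod_{\ell > B} \ell^{k_\ell}\tfrac{\#\G_{\ell^{k_\ell}}^t}{\#\G_{\ell^{k_\ell}}}\big)$ for a cutoff $B$. For $\ell > B$ with $B$ large enough to exclude all exceptional and ramified primes, each factor lies in $[1 - c\ell^{-2}, 1 + c\ell^{-2}]$ for an absolute constant $c$ (using the explicit large-image formulas and the fact that the finite-level ratios $\ell^k \#\A_{\ell^k}^t/\#\A_{\ell^k}$ are monotone or at least uniformly sandwiched in $k$ — this follows from Propositions \ref{prop:GLKInert}--\ref{prop:GLKTSplit}), so the tail product is within $\exp(\pm c\sum_{\ell>B}\ell^{-2})$ of $1$ uniformly in $m$. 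Given $\varepsilon > 0$, choose $B$ so the tail deviation is below $\varepsilon/3$; then the finite product over $\ell \le B$ converges to $\prod_{\ell \le B}\widehat F_\ell$ as $m \to \infty$ by divisibility (finitely many factors, each convergent by Lemma \ref{lemma:FlkFinte}), and one concludes $|F_m - \prod_\ell \widehat F_\ell| < \varepsilon$ for $m$ in a suitable $m_0\N$. Hence $\widehat F = \limd{m} F_m = \prod_\ell \widehat F_\ell < \infty$.

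The main obstacle I anticipate is justifying the multiplicativity $\#\G_m^t = \prod_\ell \#\G_{\ell^{k_\ell}}^t$, which requires that the images $\G_{\ell^{k_\ell}}$ are genuinely independent across primes (so that $\G_m$ is the full product, not a proper subgroup). For $m$ divisible by a fixed integer $m_0$ this is where one must invoke the structure of $\widehat\G = \prod_\ell \widehat\G_\ell$ established via Ribet's work, possibly combined with the fact that any deficiency is bounded (giving only a finite correction factor that is absorbed into the exceptional-prime contribution). If one only knows $\G_m \subseteq \prod_\ell \G_{\ell^{k_\ell}}$ with finite, eventually-stable index, the argument still goes through because that index stabilizes and contributes a fixed constant; the secondary technical point is the uniform-in-$k$ sandwiching of the tail factors, which is routine from the explicit polynomial formulas but must be stated carefully to legitimize the interchange of the two limits.
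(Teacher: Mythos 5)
The central gap is your multiplicativity claim. Your computation hinges on $\G_m=\prod_\ell\G_{\ell^{k_\ell}}$ and hence $\#\G_m^t=\prod_\ell\#\G_{\ell^{k_\ell}}^t$, i.e.\ full independence of the mod-$\ell^{k_\ell}$ images at distinct primes. This is not available and is in general false; the paper itself is explicit about this later: the statement $P_m\cdot P_{m'}\sim P_{mm'}$ for coprime $m,m'$ is introduced only as Assumption \ref{ass:IndPrimes}, needed for the numerics, with the remark that $\widehat\rho$ is in general \emph{not} an independent system. Consequently your conclusion $\widehat F=\prod_\ell\widehat F_\ell$ is stronger than what the corollary asserts and than what the paper proves: the paper only uses the inequality $\#\G_m^t\le\prod_{\ell^k\| m}\#\G_{\ell^k}^t$ (which needs nothing more than the embedding $\G_m\hookrightarrow\prod_{\ell^k\| m}\G_{\ell^k}$ and the componentwise nature of $Z=0$) and obtains the one-sided bound $\widehat F\le\prod_\ell\widehat F_\ell\cdot\#\bigl(\prod_\ell\widehat\G_\ell/\widehat\G\bigr)<\infty$. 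Your fallback --- ``finite, eventually stable index, contributing a fixed constant'' --- does not repair the equality: even if $[\prod_{\ell^k\| m}\G_{\ell^k}:\G_m]$ stabilizes, the trace-zero locus need not be equidistributed among the cosets of $\G_m$ inside the product, so there is no exact constant relating $F_m$ to $\prod_{\ell^k\| m}F_{\ell^k}$; one only gets the upper bound, and your limit-interchange argument, which is built on the exact product decomposition of $F_m$, collapses with it.

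The second, related gap is the source of the cross-prime control. Ribet's theorem as stated in the paper (Theorem \ref{theorem:largeimage}) is purely local at each $\ell$: openness of $\widehat\G_\ell$ in $\widehat\A_\ell$ and equality for almost all $\ell$. It says nothing about independence across different primes, and ``Goursat-type arguments, or simply assumed implicitly'' is exactly where the real content sits; the paper instead invokes Serre's adelic open image theorem ($\widehat\G$ open in $\prod_\ell\widehat\G_\ell$, cited from Loeffler) to obtain the finite index $\#\bigl(\prod_\ell\widehat\G_\ell/\widehat\G\bigr)$ that controls the discrepancy between $\#\G_m$ and $\prod_{\ell^k\| m}\#\G_{\ell^k}$. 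Your local analysis is fine and matches the paper: the expansions $\widehat F_\ell=1+O(\ell^{-2})$ from the explicit inert/split formulas (uniform in $k$, since Propositions \ref{prop:GLKInert}--\ref{prop:GLKTSplit} are exact at every level), plus finitely many even/ramified/exceptional primes each contributing a bounded factor by Lemma \ref{lemma:FlkFinte}. If you replace the claimed multiplicativity of $\#\G_m^t$ by the inequality above and cite the adelic open image theorem for the denominator, you recover the paper's proof of $\limd m F_m<\infty$; but as written, your argument both relies on an unproved (and generally false) independence statement and proves a product formula that the paper deliberately does not claim.
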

\begin{proof}
First note that  for any sequence $a_n$ 
\begin{align*}
\sum_n a_n < \infty & \Rightarrow \sum_n \log ({a_n+1})  < \infty \\
& \Leftrightarrow \prod_n (a_n+1) < \infty. 
\end{align*}
Moreover if $\ell$ is an odd unramified prime with large image,    Lemma \ref{lemma:FlkFinte} yields 
\begin{align*}
\widehat F_\ell  & = \begin{dcases}1 + \frac{\ell^3 +\ell + 1}{(\ell+1)(\ell^4-1)}  & \textnormal{ if } \ell \textnormal{ is inert} \\ 1 + \frac{\ell^3 - \ell -1}{(\ell^2-1)^2 (\ell+1)} &\textnormal{ if  } \ell \textnormal{ is split.}   \end{dcases} 
\end{align*}
In particular the product $\prod \widehat F_\ell$ taken over all odd unramified primes with large image is finite. Since almost all primes are odd, are  unramified and have large image the product taken over all  primes is finite by Lemma \ref{lemma:FlkFinte}.
Finally  Serre's adelic open image theorem \cite[Theorem~3.3.1]{Loef} states that $\widehat \G$ is an open subgroup of $\prod_\ell \widehat \G_\ell$ hence
\begin{align*}
\limd m F_m & = \limd m m \frac{\#\G_m^t }{\# \G_m} \\
& \le \limd m \frac{\prod_{\ell^k \| m} \ell^k\#\G^t_{\ell^k} }{\#\G_m} \\
& = \limd m \frac{\prod_{\ell^k \| m} \ell^k \#\G^t_{\ell^k} }{\prod_{\ell^k \| m} \#\G_{\ell^k}} \cdot  \frac{\prod_{\ell^k \| m} \#\G_{\ell^k}}{\#\G_m} \\
& = \limd m \prod_{\ell^k \| m} \ell^k\frac{ \#\G^t_{\ell^k} }{ \#\G_{\ell^k}} \cdot \limd m \frac{\#\prod_{\ell^k \| m} \G_{\ell^k}}{\#\G_m} \\
&  = \prod_\ell \widehat F_\ell \cdot \#\left(\scriptsize{\prod_{\ell}}\widehat \G_{\ell}\big/ \widehat\G\right)\\
& < \infty. \qedhere
\end{align*}
\end{proof}

\section{Main Result} \label{section:MainResult}

In this section we state and prove our main result.

\begin{lemma}\label{lemma:MainResult}
Let $f$ be a weight $2$  normalized cuspidal Hecke eigenform of level $\Gamma_1(N)$, with quadratic coefficient field without inner twist. \ctext{Let $m$ be a positive integer such that Assumptions \ref{ass:PmAss} and \ref{ass:ind1}  hold}. Then  
 $$\#\{p<x \textnormal{ prime} \mid a_p \in \Q \} \sim \frac{F_m}{\alpha_m} \frac{16 \sqrt D }{3\pi^2}\frac{ \sqrt x}{\log x}, $$
 with $0<\alpha_m<\infty$ as in Assumption \ref{ass:ind1}.
 \end{lemma}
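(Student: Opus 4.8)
The plan is to assemble the three preceding ingredients — Corollary \ref{cor:PmCor} for the archimedean factor $P^m(x)$, Lemma \ref{lemma:chebdens} for the factor $P_m(x)$ at the finite places, and Assumption \ref{ass:ind1}, which glues these two to $P(x)$ — and then to convert the resulting asymptotic for $P(x)$ into one for the counting function using the prime number theorem.

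First I would record that, by the equivalence $a_p(f) \in \Q \Leftrightarrow Z_p = 0$ observed in Section \ref{sect:Model}, one has
$$\#\{p < x \textnormal{ prime} \mid a_p(f) \in \Q\} = \pi(x)\, P(x),$$
so it suffices to determine the asymptotics of $P(x)$ and multiply by $\pi(x)$. Since the fixed integer $m$ satisfies Assumption \ref{ass:ind1}, we have $\lim_{x\to\infty} P^m(x)P_m(x)/P(x) = \alpha_m$ with $0 < \alpha_m < \infty$. Here $P_m(x) \to F_m/m > 0$ by Lemma \ref{lemma:chebdens} (this is exactly the $0 < \alpha_m$ remark, using that the identity lies in $\G_m^t$ so $\#\G_m^t \geq 1$), and $P^m(x) > 0$ for all $x$ large since $Z_p = 0$ occurs for at least one prime; hence $P(x)$ is eventually positive and $P(x) \sim \alpha_m^{-1} P^m(x)\, P_m(x)$. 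Now I invoke Corollary \ref{cor:PmCor}, which applies because $m$ also satisfies Assumption \ref{ass:PmAss}, to get $P^m(x) \sim \frac{16 m \sqrt D}{3\pi^2 \sqrt x}$, and Lemma \ref{lemma:chebdens} to get $P_m(x) \sim \frac{F_m}{m}$. Multiplying these asymptotic equivalences — legitimate since all factors are eventually positive with finite nonzero limiting constants after normalization — yields
$$P(x) \sim \frac{1}{\alpha_m}\cdot \frac{16 m \sqrt D}{3\pi^2\sqrt x}\cdot \frac{F_m}{m} = \frac{F_m}{\alpha_m}\cdot\frac{16\sqrt D}{3\pi^2\sqrt x}.$$

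Finally I would multiply by $\pi(x)$ and use $\pi(x) \sim x/\log x$, equivalently $\pi(x)/\sqrt x \sim \sqrt x/\log x$, to conclude
$$\#\{p < x \textnormal{ prime} \mid a_p(f) \in \Q\} = \pi(x) P(x) \sim \frac{F_m}{\alpha_m}\cdot\frac{16\sqrt D}{3\pi^2}\cdot\frac{\sqrt x}{\log x},$$
which is the claim. There is no genuinely hard step here: all the analytic and arithmetic content is packaged into the cited results, and the only point requiring a little care is justifying that the product of the three asymptotic relations is valid, which holds because $P^m(x)\sqrt x$, $P_m(x)$, and $\alpha_m$ all converge to finite, strictly positive constants.
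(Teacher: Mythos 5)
Your proposal is correct and follows essentially the same route as the paper: the paper's proof is exactly this four-factor decomposition (Assumption \ref{ass:ind1}, Lemma \ref{lemma:chebdens}, Corollary \ref{cor:PmCor}, and the prime number theorem), written out with explicit $\varepsilon/6$ bounds instead of your chain of asymptotic equivalences. Your aside justifying positivity of $P^m(x)$ via ``$Z_p=0$ occurs for at least one prime'' is slightly off (and unnecessary, since Assumption \ref{ass:ind1} already presupposes the ratio is well defined with a positive finite limit), but this does not affect the argument.
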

 \begin{proof}
Denote $N_f(x) = \#\{p <x \textnormal{ prime }\mid a_p(f) \in \Q\} = P(x)\cdot \pi(x)$ and let $\varepsilon>0$. By Assumption \ref{ass:ind1} there exists  $x_1>0$  such that for all $x>x_1$
$$ \left|  \frac{P^m(x)P_m(x)}{\alpha_m P(x)}-1 \right|<\varepsilon/6. $$
  Let $x_2$ be such that for all $x>x_2$ we have
$$\left| \frac{F_m}{mP_m(x)} - 1 \right|<\frac{\varepsilon} 6.$$
Such an $x_2$ exists by Lemma \ref{lemma:chebdens}. 
By Corollary \ref{cor:PmCor} under Assumption \ref{ass:PmAss} there exists an $x_3$ such that for all $x>x_3$
$$\left| \frac{16m\sqrt D}{3\pi^2 \sqrt x P^m(x)} - 1 \right|<\frac{\varepsilon} 6.$$
Finally let $x_4$ be such that for all $x>x_4$
$$\left| \frac{x}{\pi(x) \log(x)} -1\right|<\frac{\varepsilon}{6}.$$
Then for any $x >\textnormal{max}\{ x_1,x_2,x_3,x_4\}$ we obtain
\begin{align*}
\left|\frac{1}{\alpha_m}\frac{16\sqrt D F_m \sqrt x}{3\pi^2\log x N_f(x)}-1 \right|& =\left|\frac{P^m(x)P_m(x)}{\alpha_m P(x)} \cdot \frac{16\sqrt D m }{3\pi ^2 \sqrt x P^m(x)} \cdot \frac{F_m}{mP_m(x)} \cdot \frac{x}{\pi(x)\log x}-1\right|\\
& < \left|\left(1+\frac{\varepsilon} 6\right)^4 -1\right| \\
& <\varepsilon. \qedhere
\end{align*} 
\end{proof}

\begin{corollary}\label{cor:MainResult}
Let $f$ be as above. \ctext{Suppose that  there exists a positive integer $m_0$ such that Assumptions \ref{ass:PmAss} and \ref{ass:ind1} hold for $m_0$}. 
\begin{enumerate}
\item Then Assumption \ref{ass:PmAss} implies \ref{ass:ind1} for any positive integer $m$.
\item If Assumption \ref{ass:PmAss} is true for all positive integers  $m \in M\Z$  for some $M$, then 
$$0<\limd m \lim_{x \rightarrow \infty} \frac{P^m (x)\cdot P_m(x)}{P(x)} =:\alpha< \infty.$$
Moreover
$$\#\{p  < x \textnormal{ prime } \mid a_p(f) \in \Q\} \sim \frac{\widehat F}{\alpha} \frac{16 \sqrt D} {3\pi ^{2} }\frac{\sqrt x}{\log x}.$$
\end{enumerate}
\end{corollary}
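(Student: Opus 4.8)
The plan is to bootstrap from Lemma~\ref{lemma:MainResult}. Once the two assumptions are known for a single $m_0$, that lemma already pins down the true order of growth of $N_f(x):=\#\{p<x\text{ prime}\mid a_p(f)\in\Q\}$, hence an unconditional asymptotic for $P(x)=N_f(x)/\pi(x)$ itself; the factors $P^m(x)$ and $P_m(x)$ for all other $m$ are then controlled by Corollary~\ref{cor:PmCor} and Lemma~\ref{lemma:chebdens}, and both parts of the statement can be read off. Concretely: by hypothesis Assumptions~\ref{ass:PmAss} and \ref{ass:ind1} hold for $m_0$, so Lemma~\ref{lemma:MainResult} gives $N_f(x)\sim\frac{F_{m_0}}{\alpha_{m_0}}\frac{16\sqrt D}{3\pi^2}\frac{\sqrt x}{\log x}$; dividing by $\pi(x)\sim x/\log x$ and setting $c:=\frac{F_{m_0}}{\alpha_{m_0}}\frac{16\sqrt D}{3\pi^2}$ we get $P(x)\sim c/\sqrt x$, with $0<c<\infty$ since $F_{m_0}=m_0\#\G_{m_0}^t/\#\G_{m_0}>0$ (the identity lies in $\G_{m_0}^t$) and $0<\alpha_{m_0}<\infty$.

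For part~(1), fix any positive integer $m$ for which Assumption~\ref{ass:PmAss} holds. Then $P^m(x)\sim\frac{16m\sqrt D}{3\pi^2\sqrt x}$ by Corollary~\ref{cor:PmCor} and $P_m(x)\sim\frac1m F_m$ by Lemma~\ref{lemma:chebdens}, so dividing by $P(x)\sim c/\sqrt x$ yields
\[
\lim_{x\to\infty}\frac{P^m(x)P_m(x)}{P(x)}=\frac{16\sqrt D\,F_m}{3\pi^2 c}=:\alpha_m .
\]
This limit exists and satisfies $0<\alpha_m<\infty$ because $0<F_m<\infty$ and $0<c<\infty$; that is precisely Assumption~\ref{ass:ind1} for $m$. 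In particular $F_m/\alpha_m=\frac{3\pi^2 c}{16\sqrt D}$ is independent of $m$.

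For part~(2), suppose Assumption~\ref{ass:PmAss} holds for all $m\in M\Z$. By part~(1), for every such $m$ the inner limit $g(m):=\lim_{x\to\infty}\frac{P^m(x)P_m(x)}{P(x)}$ exists and equals $\frac{16\sqrt D\,F_m}{3\pi^2 c}$. By the corollary following Lemma~\ref{lemma:FlkFinte}, $\widehat F:=\limd m F_m$ exists and is finite; taking the threshold index in the definition of the divisibility-limit to be a multiple of $M$ (which changes nothing) gives
\[
\alpha:=\limd m g(m)=\frac{16\sqrt D}{3\pi^2 c}\,\limd m F_m=\frac{16\sqrt D\,\widehat F}{3\pi^2 c}<\infty .
\]
It remains to check $\alpha>0$, i.e. $\widehat F>0$: by Lemma~\ref{lemma:FlkFinte} one has $\widehat F_\ell>1$ for every odd unramified prime $\ell$ with large image, the product of these factors converges (the summands $\widehat F_\ell-1$ decay like $\ell^{-2}$) to a value $>1$, and each of the finitely many remaining local factors $\widehat F_\ell$ is positive because $\widehat\G_\ell$ has finite index in $\widehat\A_\ell$ (Theorem~\ref{theorem:largeimage}). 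Finally, for any $m\in M\Z$ Lemma~\ref{lemma:MainResult} gives $N_f(x)\sim\frac{F_m}{\alpha_m}\frac{16\sqrt D}{3\pi^2}\frac{\sqrt x}{\log x}$; since $F_m/\alpha_m=\frac{3\pi^2 c}{16\sqrt D}=\widehat F/\alpha$ by the computations above, this equals $\frac{\widehat F}{\alpha}\frac{16\sqrt D}{3\pi^2}\frac{\sqrt x}{\log x}$, as claimed.

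The arithmetic in these steps is just substitution of asymptotics already established, so the only genuinely delicate point is the strict positivity of $\widehat F$ (equivalently of $\alpha$), which is what makes the final formula meaningful: one must extract the lower bound $\widehat F_\ell>1$ from the explicit expressions of Lemma~\ref{lemma:FlkFinte} for the generic primes and rule out degeneracy of the finitely many exceptional or ramified local factors and of the factor at $\ell=2$. Everything else is bookkeeping.
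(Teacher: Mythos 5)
Your main line is exactly the paper's: part (1) is proved in the same way (Lemma \ref{lemma:MainResult} applied to $m_0$ pins down the true order of $P(x)$, then Corollary \ref{cor:PmCor} and Lemma \ref{lemma:chebdens} give the asymptotics of $P^m(x)$ and $P_m(x)$ for any $m$ satisfying Assumption \ref{ass:PmAss}, and the quotient of asymptotics is $\alpha_m$), and in part (2) the paper likewise uses the observation that $\alpha_m/F_m$ is independent of $m$ together with $\widehat F=\limd m F_m$ to identify $\limd m\alpha_m$ and then passes the divisibility limit through the statement of Lemma \ref{lemma:MainResult}. For that bulk of the statement your proposal is correct and essentially identical to the paper's proof.

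The one place where you go beyond the paper --- the proof that $\alpha>0$, equivalently $\widehat F>0$ --- is where there is a genuine gap. Your argument treats $\widehat F$ as if it were $\prod_\ell \widehat F_\ell$: positivity of the local factors bounds that \emph{product} away from zero, but the corollary following Lemma \ref{lemma:FlkFinte} only gives the one-sided bound $\widehat F\le \prod_\ell\widehat F_\ell\cdot\#\bigl(\prod_\ell\widehat\G_\ell/\widehat\G\bigr)$, and identifying $\widehat F$ with the product of local factors is precisely the independence property (Assumption \ref{ass:IndPrimes}) that the paper deliberately does \emph{not} assume in its main results. A genuine lower bound on $F_m$ would need more structure, e.g.\ that $\widehat\G$ contains a basic open subgroup $\prod_{\ell\in S}K_\ell\times\prod_{\ell\notin S}\widehat\G_\ell$ with each $K_\ell$ a principal congruence subgroup, followed by a direct count of elements with $Z(\sigma)=0$ inside those congruence subgroups. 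Relatedly, your claim that the finitely many exceptional local factors $\widehat F_\ell$ are positive ``because $\widehat\G_\ell$ has finite index in $\widehat\A_\ell$'' does not follow as stated: $\A_{\ell^k}^t$ is a subset, not a subgroup, so intersecting it with a finite-index subgroup need not preserve a positive proportion of its elements. To be fair, the paper itself merely asserts $0<\alpha$ inside the definition of $\alpha$ without proof, so you correctly isolated the delicate point but did not actually close it; note also that the final asymptotic is insensitive to this issue, since the constant appearing there is $\widehat F/\alpha=F_m/\alpha_m$, which is positive and finite by construction.
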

\begin{proof}
If such an $m_0$ exists then by Lemma \ref{lemma:MainResult}  there exists a positive non-zero constant  $C_{m_0}$ such that  $P(x) \sim C_{m_0}\sqrt x/\log x $.
\begin{enumerate}
\item Let $m$ be a positive integer satisfying Assumption \ref{ass:PmAss}. By Corollary \ref{cor:PmCor} and Lemma \ref{lemma:chebdens} $P^m (x)\cdot P_m(x) \sim C' \sqrt x/\log x $ with $0<C' = \frac{16 \sqrt D F_m}{3 \pi^2}<\infty$. Hence 
\begin{align*}
\lim_{x \rightarrow \infty } \frac{P^m (x)\cdot P_m(x)}{P(x)} & = \lim_{x \rightarrow \infty } \frac{C' \sqrt x /\log x}{C_{m_0} \sqrt x /\log x} \\
& = \frac{C'}{C_{m_0}}.
\end{align*}
\item By the first point  we can apply  Lemma \ref{lemma:MainResult} to any pair of  positive integers $m$ and $m'$ in $M\Z$ and obtain that
 $$ \frac{\alpha_m }{F_m} = \frac { \alpha_{m'}}{F_{m'}}.$$
 So the following definition of $\alpha$ does not depend on the choice of $m$.
 $$0< \alpha:= \frac{\alpha_m \widehat F}{F_m }< \infty.$$
 In particular 
 \begin{align*}
 \limd m \lim_{x \rightarrow \infty} \frac{P^m (x)\cdot P_m(x)}{P(x)}& = \limd m {\alpha_m} \\
  &= \limd m \frac{\alpha F_m} {\widehat F} \\
 & = \alpha. 
 \end{align*}
 By Lemma \ref{lemma:MainResult} we obtain for every positive integer $m$ divisible by $M$ that 
\begin{align*}
1 &= \lim_{x\rightarrow \infty} \frac{F_m16\sqrt D \sqrt x }{\alpha_m 3\pi^2 \log x N_f(x)}. \\
\intertext{Taking the limit by divisibility of $m$ yields}
1 & = \limd m  \lim_{x\rightarrow \infty} \frac{F_m16\sqrt D  \sqrt x }{\alpha_m 3\pi^2 \log x N_f(x)}. \\
\intertext{Since $F_m$ and $\alpha_m$ do not depend on $x$ we obtain}
1 & =  \limd m \frac{F_m}{\alpha_m} \lim_{x\rightarrow \infty} \frac{16\sqrt D  \sqrt x }{3\pi^2 \log x N_f(x)} \\
& = \lim_{x\rightarrow \infty} \frac{\widehat F16\sqrt D  \sqrt x }{\alpha 3\pi^2 \log x N_f(x)}. \qedhere
\end{align*}
\end{enumerate}
\end{proof}

\begin{theorem}\label{theorem:MainResult}
	\ctext{Let $f$ be a weight $2$  normalized cuspidal Hecke eigenform of level $\Gamma_1(N)$ with quadratic coefficient field $\Q(\sqrt D)$ and without inner twist. Suppose that  there exists a positive integer $m_0$ such that  Assumptions  \ref{ass:PmAss} and \ref{ass:ind} hold for $f$ and all positive integers in $m_0\Z$}.
	Then $$\#\{p  < x \textnormal{ prime } \mid a_p(f) \in \Q\} \sim  \frac{16 \sqrt D \widehat F} {3\pi ^{2}}\frac{\sqrt x}{\log x}.$$
\end{theorem}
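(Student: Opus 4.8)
The plan is to deduce Theorem~\ref{theorem:MainResult} from the machinery already assembled, essentially by combining Corollary~\ref{cor:MainResult} with the sharper information that Assumption~\ref{ass:ind} provides over Assumption~\ref{ass:ind1}. First I would observe that Assumption~\ref{ass:ind} for all $m \in m_0\Z$ in particular implies Assumption~\ref{ass:ind1} for $m_0$ itself, since the existence of the divisibility limit forces each $\alpha_m$ (for $m$ in the cofinal set $m_0\Z$) to be a well-defined finite number, and positivity is automatic from Lemma~\ref{lemma:chebdens}. Hence the hypotheses of Corollary~\ref{cor:MainResult} are met with $M = m_0$, and part~(2) of that corollary already yields
$$\#\{p < x \textnormal{ prime} \mid a_p(f) \in \Q\} \sim \frac{\widehat F}{\alpha}\,\frac{16\sqrt D}{3\pi^2}\,\frac{\sqrt x}{\log x},$$
where $\alpha = \limd{m} \alpha_m$. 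So the entire content of the theorem reduces to the single assertion $\alpha = 1$.

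The key step, then, is to identify $\alpha$. Here is where Assumption~\ref{ass:ind}, as opposed to merely \ref{ass:ind1}, does its work: it states precisely that
$$\limd{m} \lim_{x\to\infty} \frac{P^m(x)\,P_m(x)}{P(x)} = 1,$$
and by the computation in part~(2) of Corollary~\ref{cor:MainResult} the left-hand side equals $\alpha$. Thus $\alpha = 1$ on the nose, and substituting this into the asymptotic above gives exactly
$$\#\{p < x \textnormal{ prime} \mid a_p(f) \in \Q\} \sim \frac{16\sqrt D\,\widehat F}{3\pi^2}\,\frac{\sqrt x}{\log x},$$
which is the claim. In other words the role of the stronger assumption is cosmetic-but-essential: Assumption~\ref{ass:ind1} alone pins down the shape $c_f \sqrt x/\log x$ and gives $c_f = (F_m/\alpha_m)\cdot 16\sqrt D/(3\pi^2)$ with an \emph{a priori} unknown ratio, while Assumption~\ref{ass:ind} forces that ratio to collapse to $\widehat F$.

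I would write the proof in three short moves: (i) note $m_0$ satisfies Assumption~\ref{ass:ind1}, invoke Corollary~\ref{cor:MainResult}(1) to upgrade this to all $m$, and apply Corollary~\ref{cor:MainResult}(2) with $M = m_0$; (ii) read off that the $\alpha$ appearing there is the double divisibility limit $\limd{m}\lim_{x\to\infty} P^m(x)P_m(x)/P(x)$; (iii) quote Assumption~\ref{ass:ind} to conclude $\alpha = 1$ and simplify. The only subtlety to be careful about — and the step I expect to need the most attention in the write-up — is the bookkeeping around the limit by divisibility: one must check that the cofinal index set $m_0\Z$ over which Assumption~\ref{ass:ind} is hypothesized is compatible with the index set $M\Z$ used in Corollary~\ref{cor:MainResult}(2), so that the two expressions for $\alpha$ genuinely refer to the same quantity. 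Taking $M = m_0$ makes this immediate, so no real obstacle remains; the theorem is essentially a corollary of a corollary.
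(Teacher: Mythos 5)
Your proposal is correct and follows the paper's own argument: the paper likewise deduces the theorem by applying Corollary \ref{cor:MainResult} (whose hypotheses are met since Assumption \ref{ass:ind} supplies Assumption \ref{ass:ind1} on the cofinal set $m_0\Z$) and then observing that Assumption \ref{ass:ind} says precisely that $\alpha = 1$. Your extra bookkeeping about identifying $\alpha$ with the double divisibility limit and taking $M = m_0$ is just a more explicit spelling-out of the same step.
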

\begin{proof}
From Corollary \ref{cor:MainResult} we obtain $$\#\{p  < x \textnormal{ prime } \mid a_p(f) \in \Q\} \sim \frac{\widehat F}{\alpha} \frac{16 \sqrt D} {3\pi ^{2} }\frac{\sqrt x}{\log x}.$$
The claim of Assumption \ref{ass:ind} is precisely that $\alpha = 1$. Hence the theorem follows.
\end{proof}

\section{Numerical Results}\label{sect:Num}
In this  section we provide numerical results that support the assumptions made and the results deduced from these assumptions. Moreover we describe the method used to obtain these results.

All computations are done using   the following six new Hecke eigenforms $f_N \in S_2(\Gamma_0(N))$:
\small
\begin{align*} 
f_{29}	& ={q+(-1+\sqrt 2)q^2 +(1-\sqrt 2) q^3 +(1-2\sqrt 2) q^4-q^5 + \cdots}\\
f_{43} & = q+\sqrt 2 q^2 - \sqrt 2 q^3 +(2-\sqrt 2 ) q^5 + \cdots\\
f_{55} & = q+(1+\sqrt 2)q^2  -2\sqrt 2q^3 +(1+2\sqrt 2 )q^4 -q^5 \cdots\\
f_{23} & = q+\frac{-1+\sqrt 5}{2} q^2- \sqrt 5 q^3 - \frac{1+\sqrt 5} 2 q^4 + (-1+\sqrt 5)q^5 + \cdots \\
f_{87} & =q + \frac{1+\sqrt 5} 2 q^2 + q^3 + \frac{-1+\sqrt 5} 2 q^4 + (1
-\sqrt 5)q^5 \cdots \\
f_{167} & = q+ \frac{-1+\sqrt 5} 2 q^2 -\frac{1+\sqrt 5} 2 q ^3  -\frac{1+\sqrt 5} 2 q ^4 - q^5 + \cdots.
\end{align*}
\normalsize
Note that the level for each of the eigenforms is square-free and the nebentypus is trivial so by Lemma \ref{lemma:prel0}  none of these eigenforms have inner twists. Moreover the coefficient field of $f_{29}$, $f_{43}$ and $f_{55}$ is $
\Q(\sqrt 2)$ and the coefficient field of $f_{23}$, $f_{87}$ and $f_{167}$ is $\Q(\sqrt 5 )$. In this section  we will denote $\A_{f_N}$, $c_{f_N}$,... by $\A_N$, $c_N$, ... respectively.

As described in Section \ref{section:suth} the Galois orbit of the $p$-th coefficient of a eigenform $f_N$ can be computed from the $L_p$-polynomial of the  abelian variety $\A_N$ associated to $f_N$. For each eigenform $f_N$ we give an equation for a hyperelliptic curve $C_N$ such that the Jacobian $J(C_N)$ is isomorphic to the abelian variety $\A_N$.  Obtaining such an equation is a non-trivial problem. For levels $29$, $43$ and $55$ the equations are found in \cite[page~42]{Bend} and  \cite[page 137]{Wils} for the remaining three. The equations are: 
\begin{align*}
C_{29}& :y^2 =x^6 - 2x^5 + 7x^4 - 6x^3 + 13x^2 - 4x + 8, \\
C_{43}& :y^2 =  -3x^6- 2x^5 + 7x^4 - 4x^3 - 13x^2 + 10x - 7, \\
C_{55}&:y^2 =-3x^6 + 4x^5 + 16x^4 - 2x^3 - 4x^2 + 4x - 3, \\
C_{23}& :y^2 = x^6 + 2x^5 - 23x^4 + 50x^3 - 58x^2 + 32x - 11, \\
C_{87}&:y^2 =-x^6 + 2x^4 + 6x^3 + 11x^2 + 6x + 3, \\
C_{167}&:y^2 =-x^6 + 2x^5 + 3x^4 - 14x^3 + 22x^2 - 16x + 7.
\end{align*}

Next we use Andrew Sutherlands {smalljac} algorithm described in \cite{KeSu} to compute the coefficients of the $L_p$-polynomial of each hyperelliptic curve $C_N$. This algorithm is implemented in C and is  available  at Sutherland's web page. 
With this method we are able to compute the Galois orbit of the coefficients of one eigenform for all primes up to $10^8$ in less than $50$ hours. All computations are done on a Dell Latitude E6540 laptop with Intel i7-4610M processor (3.0 GHz, 4MB cache, Dual Core). The processing of the data and the creation of the graphs was done using Sage Mathematics Software \cite{sage} on the same machine. The running time of this is negligible compared to the  smalljac algorithm.

\subsection{Murty's Conjecture}

First we  check Conjecture \ref{conj:Murty}. For each eigenform we plot the number of primes $p<x$ such that the $p$-th coefficient is a rational integer for 50 values of $x$ up to $10^8$. According to  this conjecture there exists a constant $c_N$ such that 
$$\#\{p<x \textnormal{ prime} \mid a_p(f_N) \in \Q\} \sim c_N \frac{\sqrt x}{\log x}.$$
To check the conjecture we approximate $c_N$ using least squares fitting.  Denote this estimate by $\widetilde c_N$. Figure \ref{fig:model} provides numerical evidence for the  behaviour of $N(x)$ and column 2 of Table \ref{table:FN} list the values of $\widetilde c_N$ found by least squares fitting.
\begin{figure}[h!]
\centering
\includegraphics[width=0.8\linewidth]{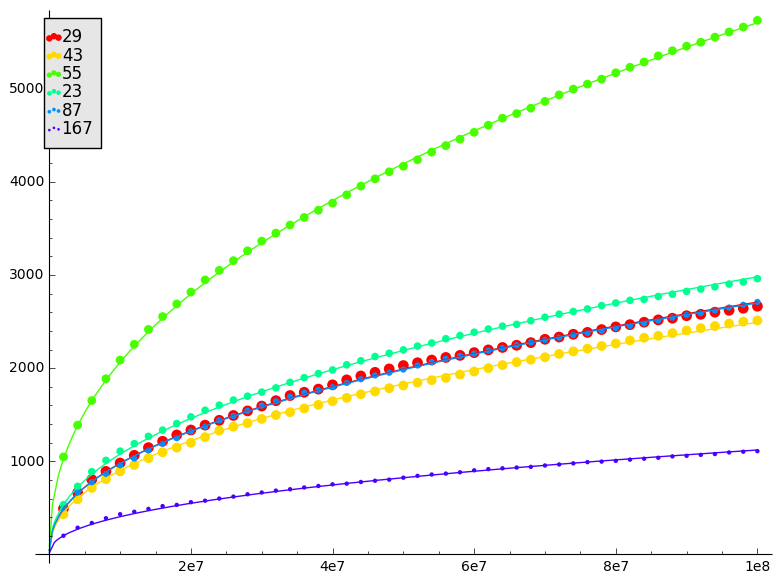}
\caption{Plot of  $\#\{p<x \textnormal{ prime} \mid a_p(f_N) \in \Q\}$ (dots) and  $\widetilde c_N \sqrt x/\log x$ (line) using  least squares fitting to compute $\widetilde c_N$ for $x$ up to $10^8$. }
\label{fig:model}
\end{figure}

\subsection{The place at infinity}
Corollary \ref{cor:PmCor} states that under Assumption \ref{ass:PmAss} and the generalized Sato-Tate conjecture $$\#\left\{p<x \textnormal{ prime} \mid Z_p \in ]-m\sqrt D/2, m\sqrt D/2[\right\} \sim \frac{16 \sqrt D m}{3\pi^2} \frac {\pi(x)} {\sqrt x}.$$
For  $m$ equal to $100$, $500$ and $1000$  Figure \ref{fig:Inf} indicates that $\frac{16m \sqrt D }{3\pi^2} \frac {\pi(x)} {\sqrt x}$ is in fact a good approximation for $\#\{p<x\textnormal{ prime}\mid Z_p \in ]-m\sqrt D/2, m\sqrt D/2[\}$. Although this neither proves Assumption \ref{ass:PmAss} nor the generalized Sato-Tate conjecture, it does confirm that $P_m(x)$  depends on the coefficient field of the eigenform. 
\begin{figure}[h!]
	\begin{subfigure}{0.329\textwidth}
		\includegraphics[width =  \linewidth ]{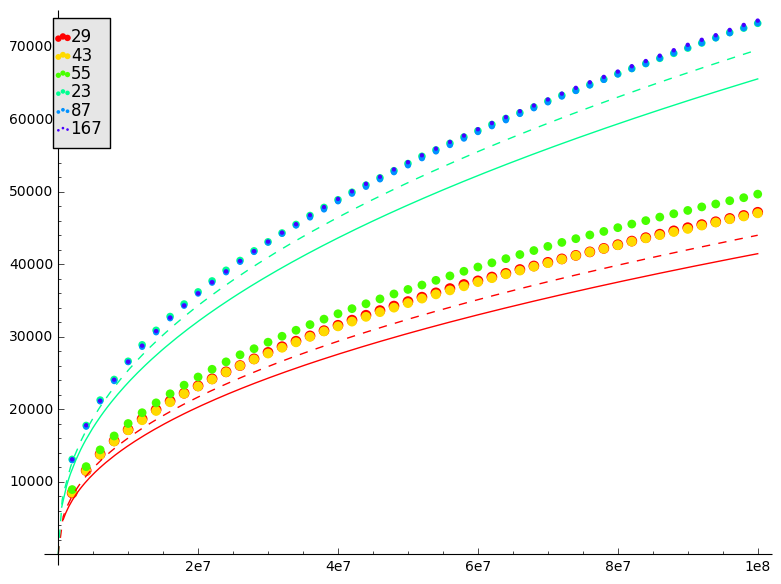}
		\caption{$m=100$}
	\end{subfigure}
	\begin{subfigure}{0.329\textwidth}
		\includegraphics[width =  \linewidth ]{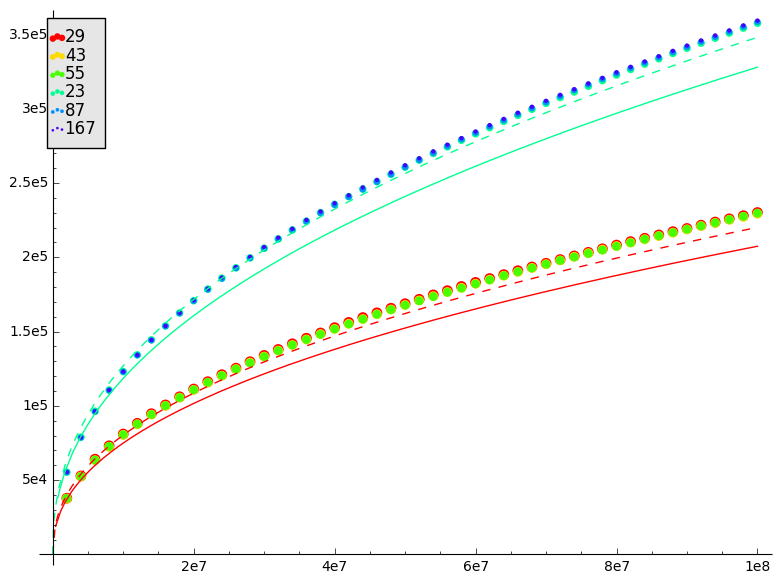}
		\caption{$m=500$}
	\end{subfigure}
	\begin{subfigure}{0.329\textwidth}
		\includegraphics[width =  \linewidth ]{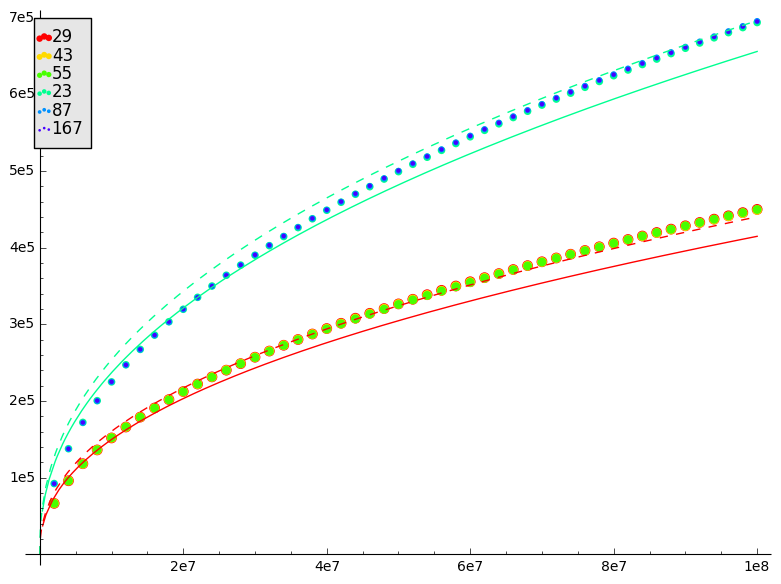}
		\caption{$m=1000$}
	\end{subfigure}
\caption{Plots of $\#\{p<x\textnormal{ prime} \mid Z_p \in ]-m\sqrt D/2, m\sqrt D/2[\}$ (dots),  $\frac{16 \sqrt D m}{3\pi^2} \frac {\pi(x)} {\sqrt x}$ (dashed) and $\frac{16 \sqrt D m}{3\pi^2} \frac {\sqrt x} {\log x}$ (full line) for $\sqrt D = \sqrt 5$ (cyan) and $\sqrt D = \sqrt 2$ (red) for each eigenform $f_N$ and $m =100$, $500$, $1000$.} 
\label{fig:Inf}
\end{figure}

\subsection{Finite Places}

In  \cite{BiDi} Nicolas Billerey and Luis Dieulefait provide explicit bounds on the primes $\ell$ that do not have large image for a given eigenform $f \in S_2(\Gamma_0(N))$ with square-free level $N$. In fact they provide a more general result. We only state the lemma for square-free level.
\begin{lemma}\label{cor:posExcP}
Let $f$ be a eigenform in $S_2(\Gamma_0(N))$. Assume that $N=p_1p_2\cdots p_t$, where $p_1,...,p_t$ are $t\geq 1$ distinct primes and $\ell$ is exceptional. Then $\ell$ divides  $15N$ or $p_i^2-1$ for some $1\leq i\leq t$.
\end{lemma}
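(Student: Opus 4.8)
This is a direct citation from the work of Billerey and Dieulefait, so the plan is to indicate how the bound is obtained rather than to reprove it from scratch. The starting point is the observation that if $\ell$ is exceptional for $f$, then the mod-$\ell$ representation $\overline{\rho}_{f,\ell}$ attached to $f$ fails to have image as large as the full $\widehat{\A}_\ell$ permits; by the classification of maximal subgroups of $GL_2(\F_\ell)$ (and its analogue over $\F_{\ell^2}$ in the inert case), the reduced image must then be contained in a Borel subgroup, a normalizer of a (split or nonsplit) Cartan subgroup, or an exceptional subgroup with projective image $A_4$, $S_4$ or $A_5$. I would treat each of these cases separately and extract the divisibility constraint on $\ell$.

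First I would dispose of the exceptional (projectively dihedral or $A_4/S_4/A_5$) cases: these force $\ell$ to be small in an absolute sense, and a careful bookkeeping of the possible orders shows $\ell \mid 15N$ suffices to absorb them (the factor $15 = 3\cdot 5$ accounting for the projective orders, and $N$ for the ramification). The reducible (Borel) case is handled by a level-lowering / Ribet-type argument: a reducible $\overline{\rho}_{f,\ell}$ gives a congruence between $f$ and an Eisenstein series, which via the theory of the Eisenstein ideal or direct estimates on the constant term forces $\ell$ to divide a quantity built from $N$ — again absorbed by $15N$. The remaining, and most delicate, case is when the image lies in the normalizer of a Cartan subgroup: here one uses that such a representation is induced from a character of the absolute Galois group of a quadratic field $K$ ramified only at primes dividing $N\ell$, and one compares traces of Frobenius at an auxiliary prime — typically $p = 2$ or a small prime of good reduction — using the Weil bound $|a_p(f)| \le 2\sqrt p$ together with the fact that $a_p(f) \equiv 0$ or satisfies a Cartan-type congruence modulo the prime above $\ell$. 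This comparison yields that $\ell$ divides $p^2 - 1$ for such a $p$, and optimizing over $p \in \{p_1,\dots,p_t\}$ (the primes dividing the square-free level, which are automatically primes where one controls the local behaviour) produces the stated alternative ``$\ell \mid p_i^2 - 1$ for some $i$''.

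I would then assemble the three cases: exceptional or reducible $\Rightarrow \ell \mid 15N$; Cartan-normalizer $\Rightarrow \ell \mid p_i^2-1$ for some $1 \le i \le t$. Since these exhaust all possibilities for a proper subgroup of the expected image, any exceptional $\ell$ satisfies one of the two divisibility conditions, which is exactly the claim.

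\textbf{Main obstacle.} The technical heart is the Cartan-normalizer case, because one must produce an explicit auxiliary prime at which the Frobenius trace is provably nonzero modulo $\ell$ yet constrained by the induced structure; controlling ramification of the relevant quadratic field and ensuring the Weil-bound comparison is sharp enough to conclude $\ell \mid p_i^2-1$ (rather than a weaker bound) is where the real work lies. Since the paper only invokes this as a black box from \cite{BiDi}, the proof here simply cites that reference; the sketch above is to indicate the structure of the argument.

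\begin{proof}
This is a special case of the results of Billerey and Dieulefait; see \cite{BiDi}.
\end{proof}
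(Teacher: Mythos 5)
Your proof is essentially identical to the paper's: the paper also disposes of this lemma by pure citation, stating that it is \cite[Theorem~2.6]{BiDi} specialized to weight $2$, which matches your concluding proof environment. Your preliminary sketch of the Billerey--Dieulefait argument (reducible / Cartan-normalizer / exceptional subgroup trichotomy) is extra background not required for, and not part of, the paper's proof, so nothing hinges on its finer details.
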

\begin{proof}
This is the statement of \cite[Theorem~2.6]{BiDi} in the weight $2$ case.
\end{proof}
The eigenforms in our computations have weight $2$ and square-free level  so we can apply the lemma and obtain a list of primes that are possibly exceptional for each eigenform (Table \ref{table:FN}). 

If $\ell$ is an odd unramified prime with large image  Lemmas \ref{lemma:chebdens} and  \ref{lemma:FlkFinte}  yield
$$\#\{p<x\textnormal{ prime}\mid Z_p \equiv 0 \mod \ell^k\}\sim \pi(x)\cdot\begin{dcases} \frac{\ell^2 + \ell + 1 - \ell^{-2k}}{(\ell+1)(\ell^4-1)\ell^{k-3}} &\textnormal{if } \ell \textnormal{ is inert} \\ \frac{\ell^4+\ell^3 -\ell^2 -2\ell -\ell^{-2k+2}}{(\ell^2-1)^2\ell^{k-1} }& \textnormal{if } \ell \textnormal{ is split.}\end{dcases}$$

For each prime that is possibly exceptional and each eigenform we can confirm that the prime  is exceptional by comparing $\#\{p<x\textnormal{ prime}\mid Z_p \equiv 0 \mod \ell^k\}$ with the expected value for a prime with large image for $x$ up to $10^8$ (Fig. \ref{fig:mod}). For $\ell= 2$ we do not have a theoretic result for large image. Therefore none is plotted. The same holds for   $\ell=5$ and eigenforms  $f_{23}$, $f_{87}$ and $f_{167}$ since $5$ ramifies in $\Q(\sqrt 5)$. 

Some primes are inert in $\Q(\sqrt 5)$ and split in $\Q(\sqrt 2)$ or vice versa. So a priori we have  two possibilities for the behaviour of $\#\{p<x\textnormal{ prime}\mid Z_p \equiv 0 \mod \ell^k\}$ for a prime $\ell$ with large image. However the first prime for which this occurs is $7$. Indeed $7$ splits in $\Q(\sqrt 2)$ and is inert in $\Q(\sqrt 5)$. For $\ell = 7$ one can hardly distinguish the inert  and split case  visually.  

From  Figure \ref{fig:mod} we can confirm that  an odd unramified prime is exceptional for a given eigenform if the plot of $\#\{p<x \mid Z_p  \equiv 0 \mod \ell^k\}$ differs from that of the large image case. Moreover for any prime $\ell$ we can conclude that the image of the $\ell$-adic representation attached to different eigenforms is distinct. Note that the converse does not hold. Indeed the fact that two eigenforms exhibit the same behaviour with respect to $\#\{p<x \textnormal{ prime} \mid Z_p  \equiv 0 \mod \ell^k\}$ does not imply that their $\ell$-adic representations are the same.

For example if $\ell^k= 2$  (Fig. \ref{figure:mod2}), then all eigenforms except $f_{167}$  exhibit the same behaviour. But for $\ell^k=8$ (Fig. \ref{figure:mod8}) we clearly distinguish five different  representations. Note that we do not observe this behaviour  for any other prime. From $\ell^k=3$ (Fig. \ref{figure:mod3}) we can conclude that $3$ is an exceptional prime for the $3$-adic representation attached to $f_{43}$ and $f_{55}$. The primes that are marked in bold in the last column of Table \ref{table:FN} are   the primes for which Figure \ref{fig:mod} confirms the prime is exceptional.

\begin{figure}[h!]
	\begin{subfigure}{0.329\textwidth}
		\includegraphics[width =  \linewidth ]{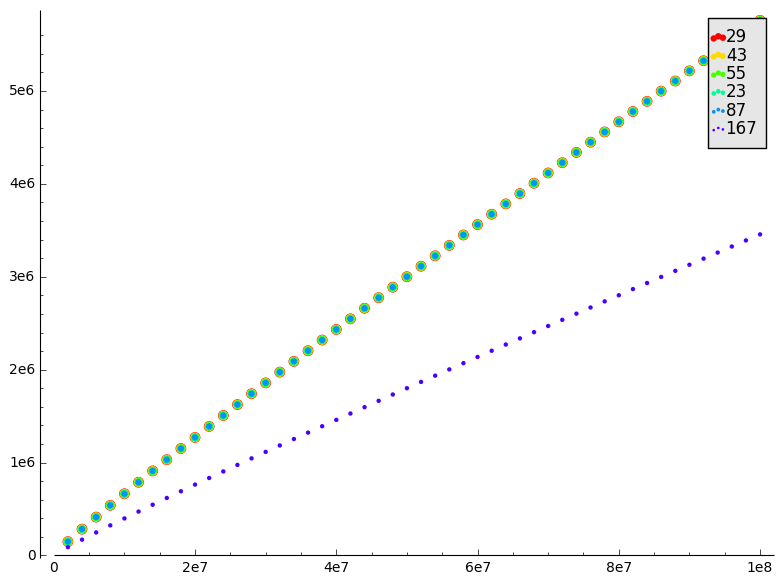}
		\caption{$\ell^k=2$}
		\label{figure:mod2}
	\end{subfigure}
	\begin{subfigure}{0.329\textwidth}
		\includegraphics[width =  \linewidth ]{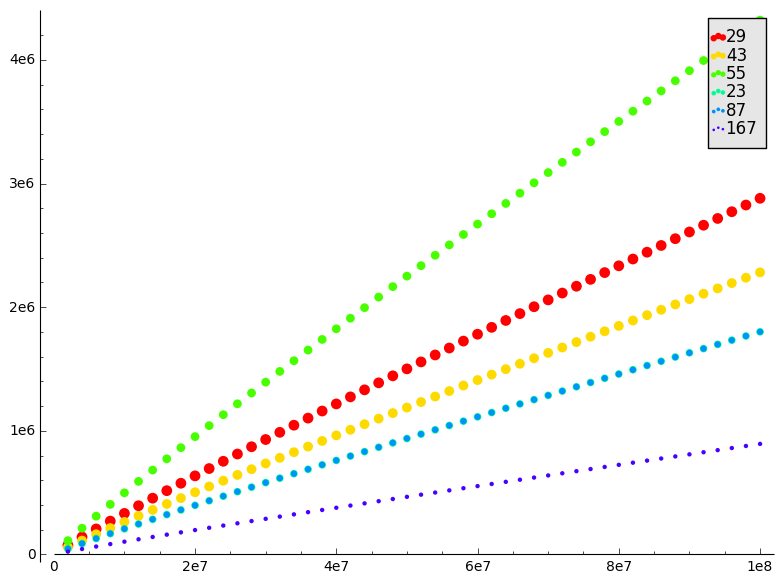}
		\caption{$\ell^k=8$}
		\label{figure:mod8}
	\end{subfigure}
	\begin{subfigure}{0.329\textwidth}
		\includegraphics[width =  \linewidth ]{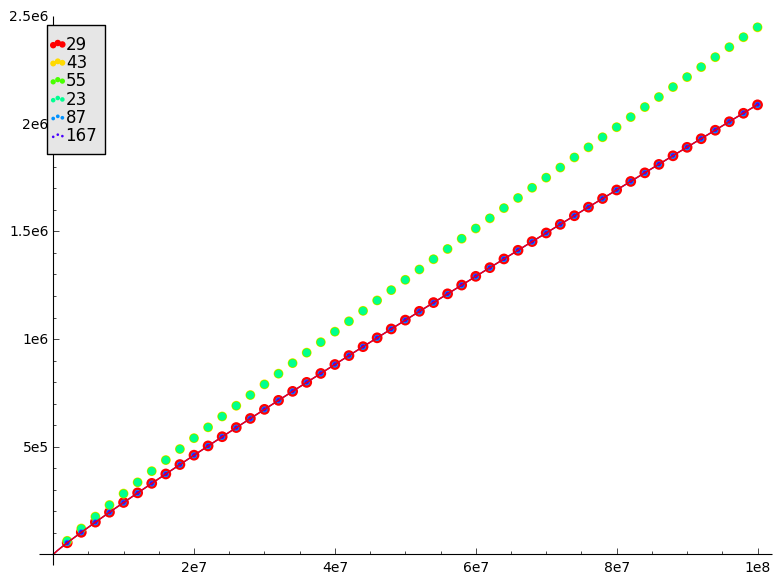}
		\caption{$\ell^k=3$}
		\label{figure:mod3}
	\end{subfigure}
	\begin{subfigure}{0.329\textwidth}\label{figure:mod5}
		\includegraphics[width =  \linewidth ]{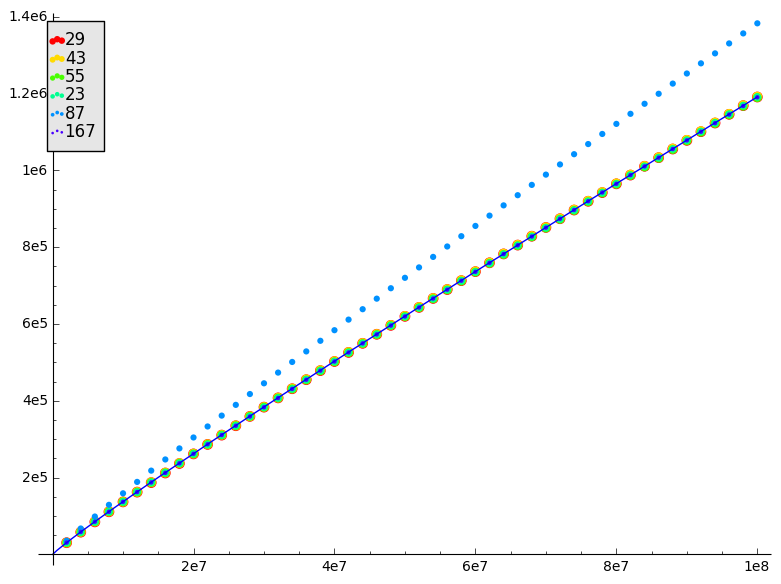}
		\caption{$\ell^k=5$}
	\end{subfigure}
	\begin{subfigure}{0.329\textwidth}\label{figure:mod7}
		\includegraphics[width =  \linewidth ]{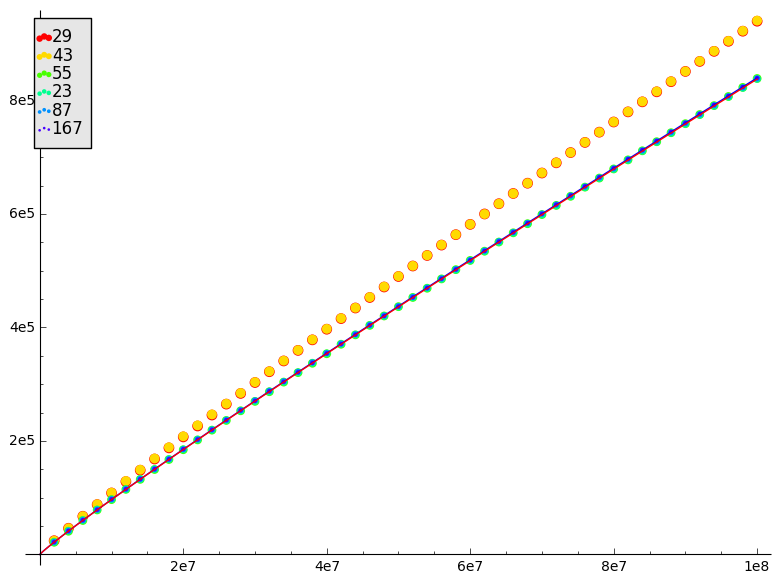}
		\caption{$\ell^k=7$}
	\end{subfigure}
	\begin{subfigure}{0.329\textwidth}\label{figure:mod11}
		\includegraphics[width =  \linewidth ]{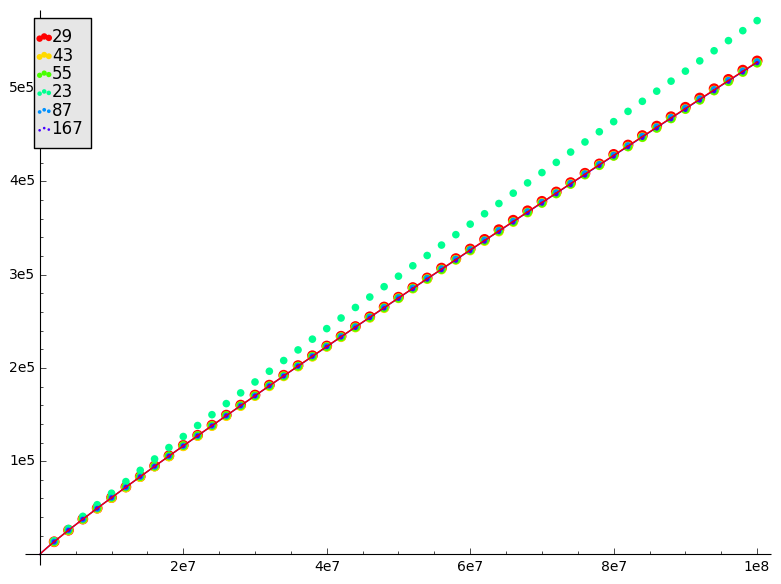}
		\caption{$\ell^k=11$}
	\end{subfigure}
	\begin{subfigure}{0.24\textwidth}\label{figure:mod29}
		\includegraphics[width =  \linewidth ]{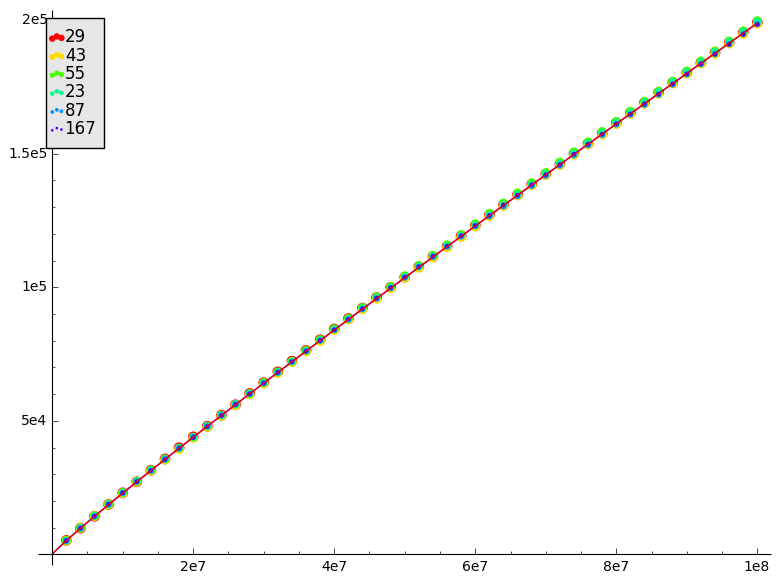}
		\caption{$\ell^k=29$}
	\end{subfigure}
	\begin{subfigure}{0.24\textwidth}\label{figure:mod43}
		\includegraphics[width =  \linewidth ]{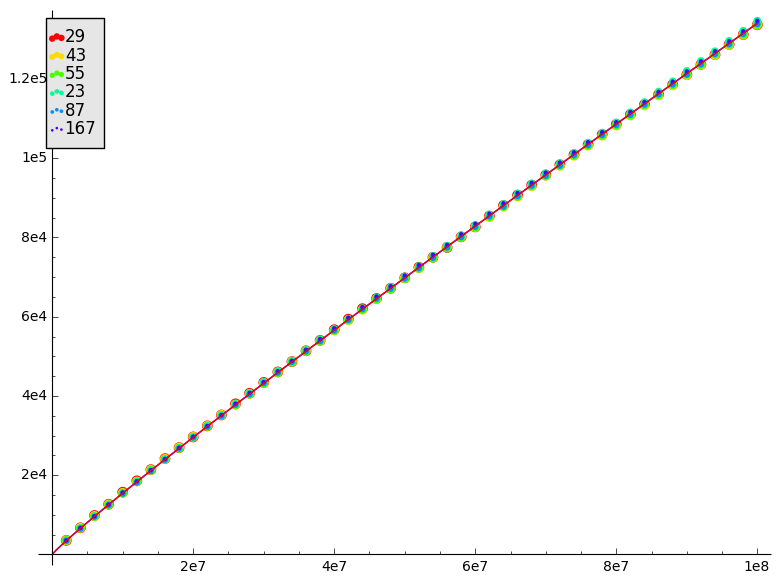}
		\caption{$\ell^k=43$}
	\end{subfigure}
	\begin{subfigure}{0.24\textwidth}\label{figure:mod83}
		\includegraphics[width =  \linewidth ]{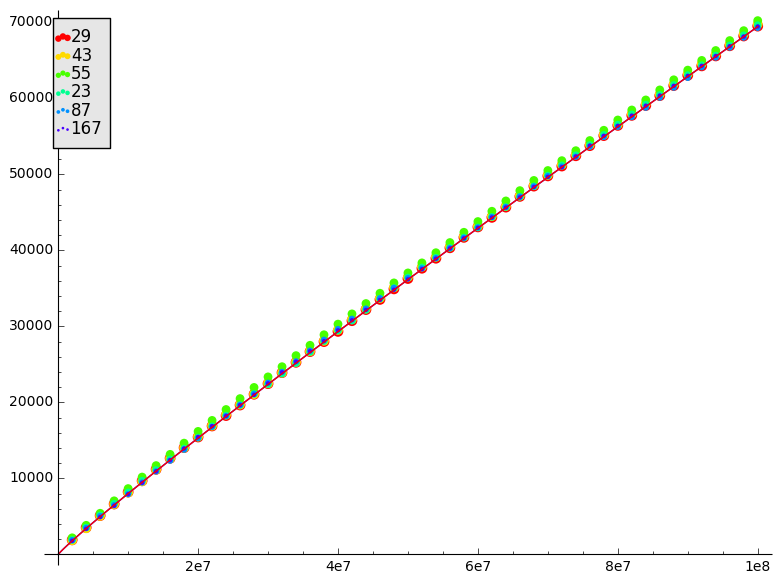}
		\caption{$\ell^k=83$}
	\end{subfigure}
	\begin{subfigure}{0.24\textwidth}\label{figure:mod167}
		\includegraphics[width =  \linewidth ]{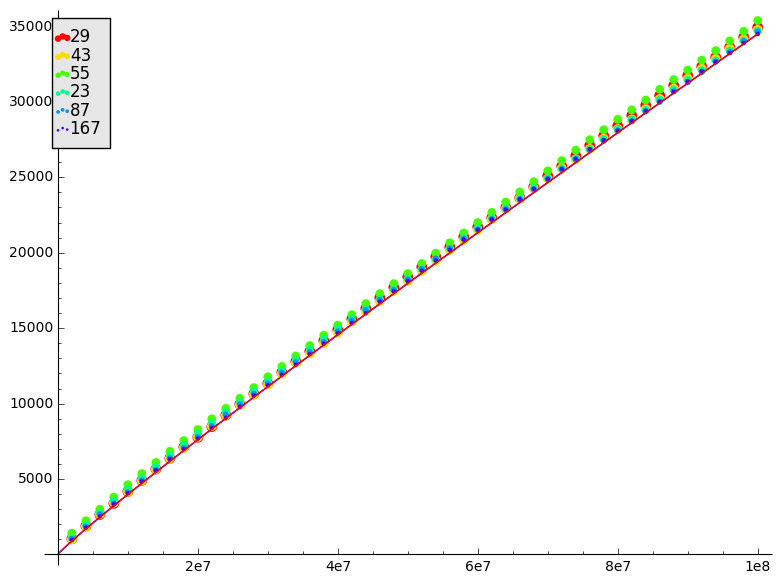}
		\caption{$\ell^k=167$}
	\end{subfigure}
\caption{Plots of $\#\{p<x \textnormal{ prime} \mid Z_p \equiv 0 \mod \ell^k\}$ for large image (line) and actual value (dots) for all eigenforms. If $\ell = 2$ no function is plotted for large image.}
\label{fig:mod}
\end{figure}

\subsection{Main Result} 

Next we test Theorem \ref{theorem:MainResult} by  comparing the behaviour of $\#\{p<x \textnormal{ prime} \mid a_p \in \Q \}$ with $ c_N\sqrt x /\log x$  where $ c_N$ is the constant predicted by Theorem \ref{theorem:MainResult}. 
Recall that  according to our main theorem under Assumptions \ref{ass:PmAss} and \ref{ass:ind} and the generalized Sato-Tate conjecture 
$$c_N  = \frac{16\sqrt D}{3\pi ^2} \widehat F.$$
Since $\widehat F$ is a limit by divisibility we approximate it numerically. In order to do so we use the following assumption.
\begin{assumption}\label{ass:IndPrimes}
Let  $m$ and $m'$  be co-prime integers. Then
 $$P_m(x) \cdot P_{m'}(x) \sim P_{m\cdot m'}(x).$$ 
\end{assumption}
If $\widehat \rho \ $ is an independent system of representations in the sense of \cite[Section 3]{ser10}, the assumption holds. However $\widehat \rho\ $ is in general not an independent system and the assumption is a much weaker claim. Moreover this assumption is only needed to get a numerical result and our main theorem holds even if this assumption is false. All computations support the assumption.
\begin{figure}[h!]
	\begin{subfigure}{0.329\textwidth}
		\includegraphics[width =  \linewidth ]{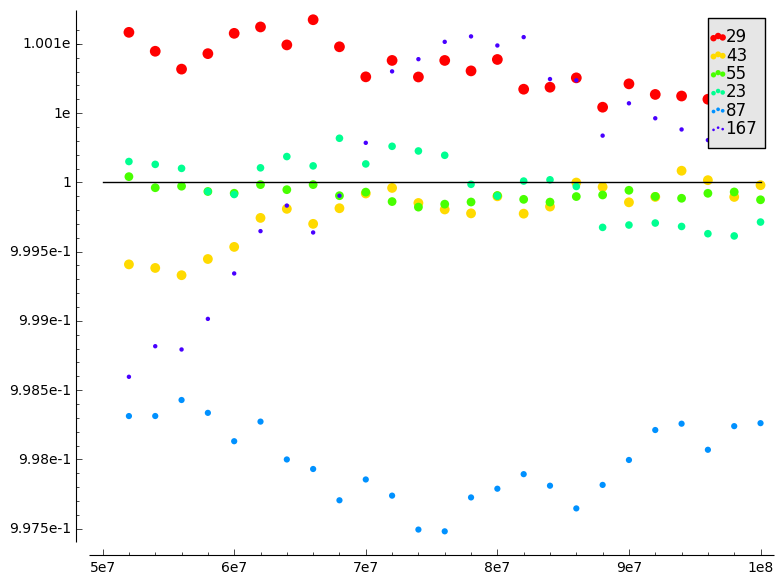}
		\caption{$m\cdot m' = 8\cdot 3 $}
	\end{subfigure}
	\begin{subfigure}{0.329\textwidth}
		\includegraphics[width =  \linewidth ]{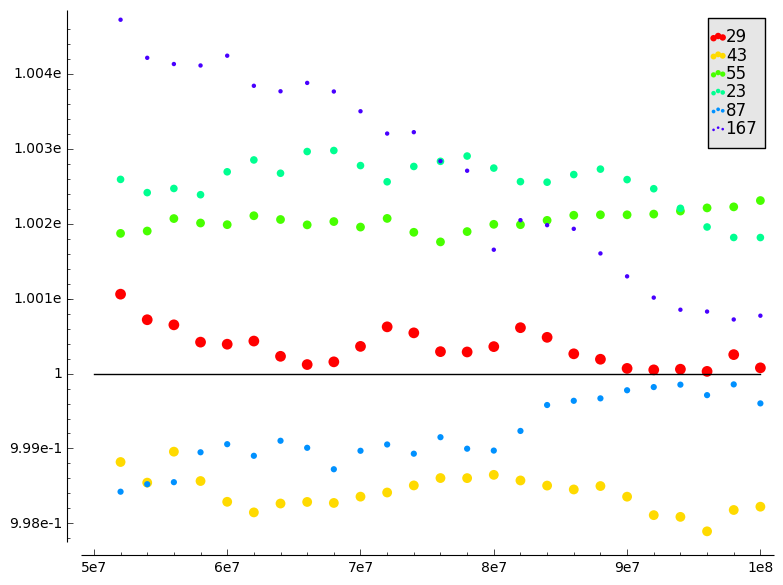}
		\caption{$m\cdot m' = 8\cdot 5 $}
	\end{subfigure}
	\begin{subfigure}{0.329\textwidth}
		\includegraphics[width =  \linewidth ]{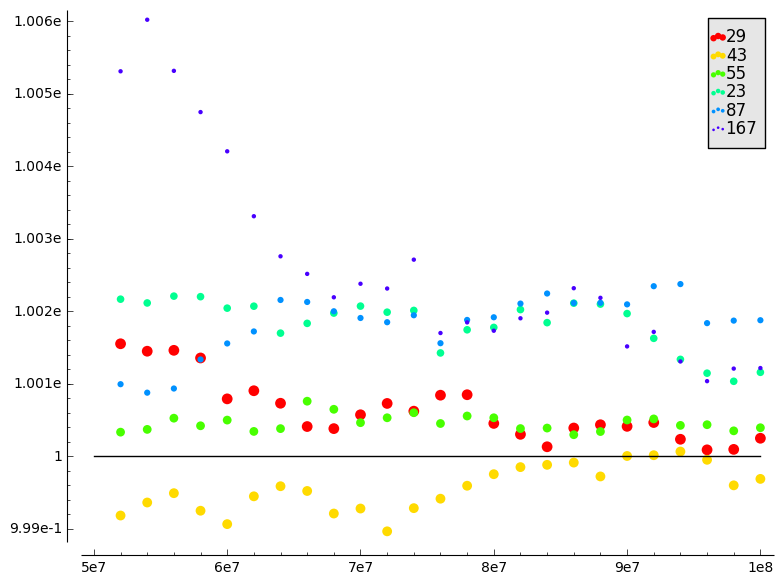}
		\caption{$m\cdot m' = 8\cdot 7 $}
	\end{subfigure}
	\begin{subfigure}{0.329\textwidth}
		\includegraphics[width =  \linewidth ]{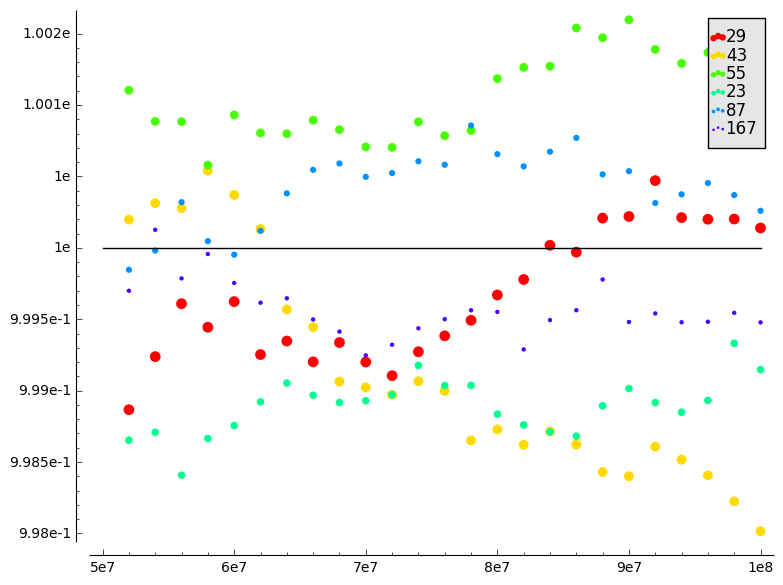}
		\caption{$m\cdot m' = 3\cdot 5 $}
	\end{subfigure}
	\begin{subfigure}{0.329\textwidth}
		\includegraphics[width =  \linewidth ]{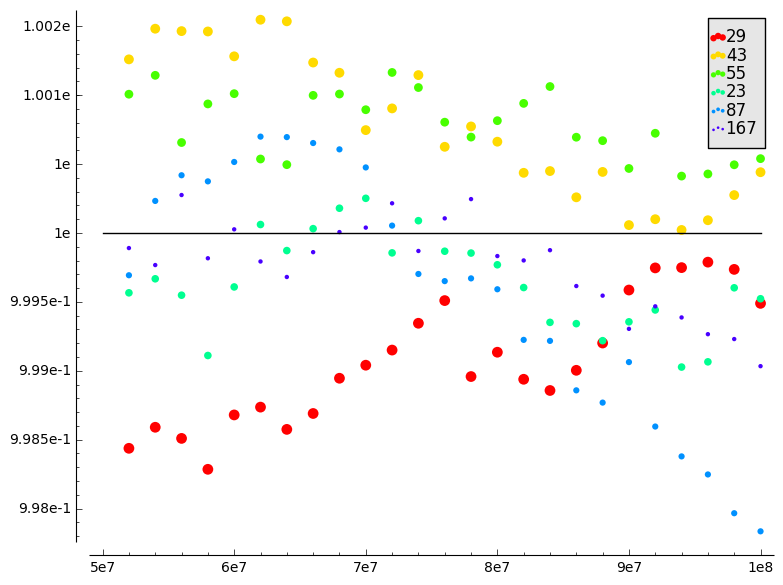}
		\caption{$m\cdot m' = 3\cdot 7 $}
	\end{subfigure}
	\begin{subfigure}{0.329\textwidth}
		\includegraphics[width =  \linewidth ]{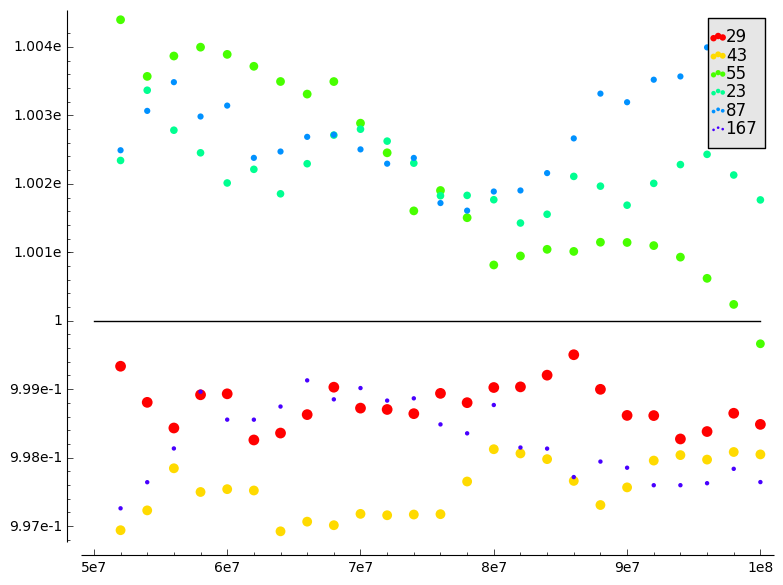}
		\caption{$m\cdot m' = 5\cdot 7 $}
	\end{subfigure}
\caption{Plots of $P_m(x)\cdot P_{m'}(x)/P_{m\cdot m'}(x) $ .}
\label{fig:MainResult}
\end{figure}

Under  Assumption \ref{ass:IndPrimes} we can compute an approximation of $\widehat F$ by taking the product over all primes
$$\widehat F = \prod _\ell \widehat F_\ell.$$
For odd unramified primes with large image the factor $\widehat F_\ell$ is given by Lemma \ref{lemma:FlkFinte}. Let $\{\ell_1,...,\ell_t\}$ be the set of primes that are even, ramified or possibly exceptional.  For every prime $ \ell_i$ we apply Lemma \ref{lemma:chebdens} for $\ell_i^{k_i}$ with $k_i$  the largest integer such that $\ell_i^{k_i}$ is less than $\sqrt {10^8} /20$, i.e.,$$k_i = \lfloor \log_{\ell_i}( \sqrt {10^8}/20) \rfloor.$$  So we use the following approximation for  $c_N$
$$\widehat c_N = \frac{16\sqrt D}{3\pi ^2}\prod_{i=1}^t \ell^{k_i}P_{\ell_i^{k_i}}(10^8) \prod _{\substack{ \ell \text { unramified} \\ \textnormal{with large image} }} \widehat F_\ell. $$
For every eigenform $f_N$ we plot $\widehat c_N \sqrt c / \log x$, $\widehat c_N \pi(x)/\sqrt x$ and $\#\{p<x \textnormal{ prime} \mid a_p \in \Q \}$ (see figure \ref{fig:MainResult}).

Comparing the values of $\widehat c_N$ to the  previously found $\widetilde c_N$ by least square fitting yields $1.025 <\widetilde c_N/ \widehat c_N <1.149 $  (Table \ref{table:FN}). This error is to be expected for this small a  bound on the primes. For example in the proof of Corollary \ref{cor:PmCor} we use $\frac{\sqrt x}{\log x}$ to approximate  $\sum_{p=2}^{x} \frac{1}{2\sqrt p} $. For  $x=10^8$ this estimate yields a similar error
$$\frac{\log {10^8}}{\sqrt {10^8}} \cdot \sum_{p=2}^{10^8} \frac{1}{2\sqrt p} =1.146\cdots.$$
\begin{figure}[h!]
	\begin{subfigure}{0.329\textwidth}
		\includegraphics[width =  \linewidth ]{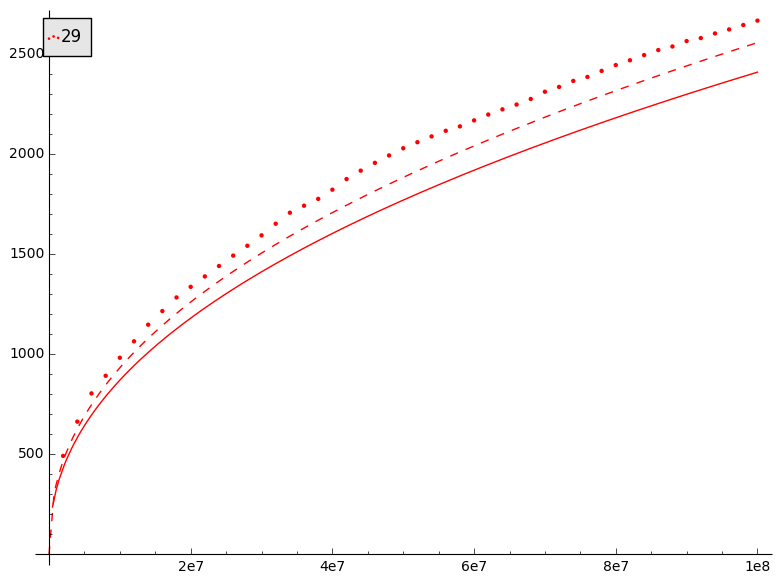}
		\caption{$N=29$}
	\end{subfigure}
	\begin{subfigure}{0.329\textwidth}
		\includegraphics[width =  \linewidth ]{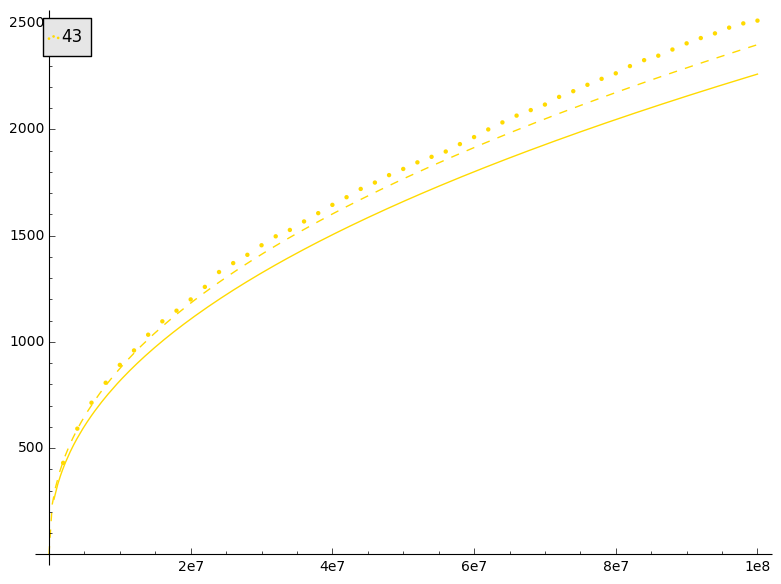}
		\caption{$N=43$}
	\end{subfigure}
	\begin{subfigure}{0.329\textwidth}
		\includegraphics[width =  \linewidth ]{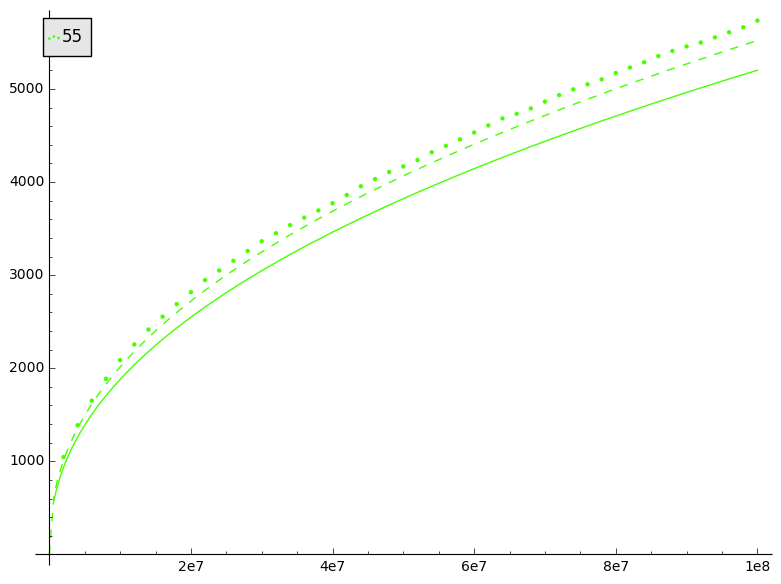}
		\caption{$N=55$}
	\end{subfigure}
	\begin{subfigure}{0.329\textwidth}
		\includegraphics[width =  \linewidth ]{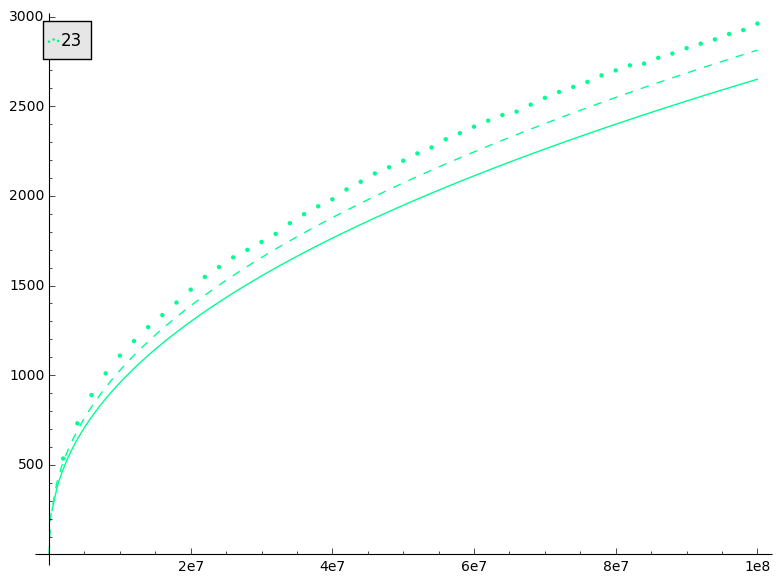}
		\caption{$N=23$}
	\end{subfigure}
	\begin{subfigure}{0.329\textwidth}
		\includegraphics[width =  \linewidth ]{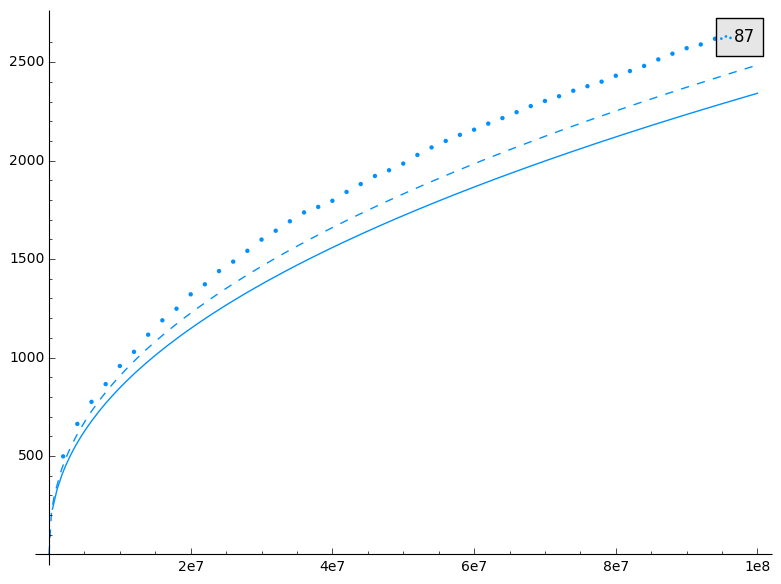}
		\caption{$N=87$}
	\end{subfigure}
	\begin{subfigure}{0.329\textwidth}
		\includegraphics[width =  \linewidth ]{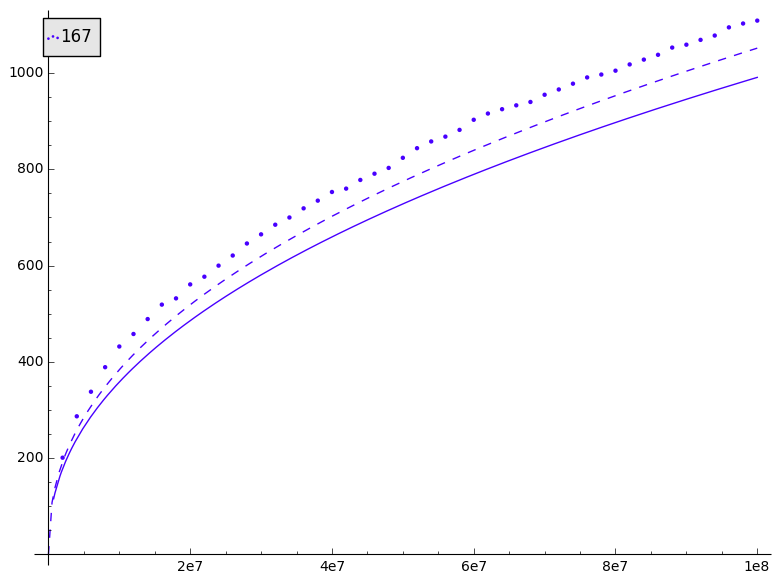}
		\caption{$N=167$}
	\end{subfigure}
\caption{Plots of $\#\{p<x \textnormal{ prime} \mid a_p \in \Q \}$ (dots), $\widehat c_N \frac{\sqrt x}{\log x}$ (full line) and $\widehat c_N\frac{\pi(x)}{\sqrt x}$ (dashed line) for all eigenforms $f_N$ with $\widehat c_N$ based on Theorem \ref{theorem:MainResult} .}
\label{fig:MainResult}
\end{figure}

\begin{table}[h]
\centering
\caption{For each eigenform $f_N$ the table contains the level $N$, the constant $\widetilde c_N$ obtained by least square fitting, the constant $\widehat c_N$ according to  Theorem \ref{theorem:MainResult}, the error $\widetilde c_N/\widehat c_N$ and the possibly exceptional primes according to Corollary \ref{cor:posExcP} respectively. The confirmed exceptional primes are marked in bold.}
\begin{tabular}{rrrrl} 
N & $\widetilde c_{N}\ $ & $\widehat c_N\ $ & $\widetilde c_N/\widehat c_N$&  Pos. exc. primes\\
\hline
29 &4.990& 4.517&1.104 & 2, 3, 5, \textbf{7}, 29\\
43 & 4.588&4.204&1.109 &2, \textbf{3}, 5, \textbf{7}, 11, 43\\
55 & 10.515&9.958&1.056 &2, \textbf{3}, 5 ,11\\
23 & 5.490&4.982&1.102 &2, 3, 5, \textbf{11}, 23\\
87 & 4.972&4.413&1.127 &2, 3, \textbf{5}, 7, 29\\
167 & 2.066&1.833&1.127 &2, 3, 5, 7, 83, 167
\end{tabular}

\label{table:FN} 
\end{table}
\pagebreak

\subsection{Final Assumption}
The final assumptions we check  are  Assumptions \ref{ass:ind} and  \ref{ass:ind1}. Recall that Assumption \ref{ass:ind} states that for every eigenform $f_N$ and every $\varepsilon>0$ there exists an $m_0$ such that for all $m$ with $m_0| m$ there exists an $x_0$ such that for all
 $x>x_0$ $$\left| \frac{P^{m}(x)\cdot P_{m}(x)}{P(x)} -1\right| <\varepsilon.$$
Assumption \ref{ass:ind1} is a much weaker claim and states that the limit 
$$ \lim_{x\rightarrow \infty} \frac{P^{m}(x)\cdot P_{m}(x)}{P(x)}  $$ 
exists for a given  positive $m$.
For all eigenforms we can find various $m$ such that  
$$\left| \frac{P^{m}(x)\cdot P_{m}(x)}{P(x)} -1\right| <0.2$$
for all $x$ larger than $5\cdot 10^7$. For every eigenform we choose different values for $m$ and plot   $\frac{P^{m}(x)\cdot P_{m}(x)}{P(x)}$ and  the constant functions  $1$ and $\frac{P^{m_N}(10^8)\cdot P_{m_N}(10^8)}{P(10^8)}$ (Fig. \ref{fig:Ind}). Where $m_N$ is the largest positive integer used for every eigenform $f_N$. The values of $m$ are chosen so that they increase by divisibility and so that the confirmed exceptional primes divide $m$.

Additionally figure \ref{fig:Ind} provides numerical evidence for Assumption \ref{ass:ind1} which implies the existence of  the double limit of $P_m(x)\cdot P^m(x)/P(x)$ by Corollary \ref{cor:MainResult}.  However one could argue that the figure  suggests that the double limit does not converge to $1$. Let us denote for every $N$
$$ \alpha_N = \limd m \lim_{x \rightarrow \infty} \frac{P^m (x)\cdot P_m(x)}{P(x)}$$ as in Corollary \ref{cor:MainResult}.  Then the corollary states that 
$$\#\{p  < x \textnormal{ prime } \mid a_p(f_N) \in \Q\} \sim \frac{1}{\alpha_N} \frac{16 \sqrt D \widehat F} {3\pi ^{2} }\frac{\sqrt x}{\log x}.$$

We have a convincing estimate $\widehat c_N$ for $c_N$. Moreover  $\alpha_{m_N} = {P^{m_N}(10^8)\cdot P_{m_N}(10^8)}/{P(10^8)}$ is the best approximation of $\alpha_N$ available. So we can  check this   last statement by plotting both functions  (Fig. \ref{fig:MainResultAPi}). In this figure  $1/\alpha_{m_N} \widehat c_N\frac{\pi(x)}{\sqrt x}$  clearly yields an overestimate when we in fact expect a slight underestimate. This is an indication that, although the convergence might be slow, the double limit equals $1$.
\begin{figure}[h!]
	\begin{subfigure}{0.329\textwidth}
		\includegraphics[width =  \linewidth ]{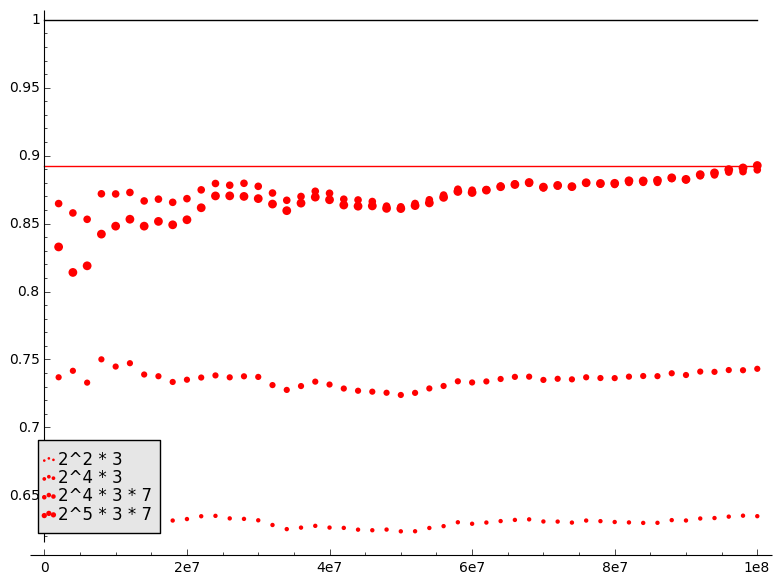}
		\caption{$N=29$}
	\end{subfigure}
	\begin{subfigure}{0.329\textwidth}
		\includegraphics[width =  \linewidth ]{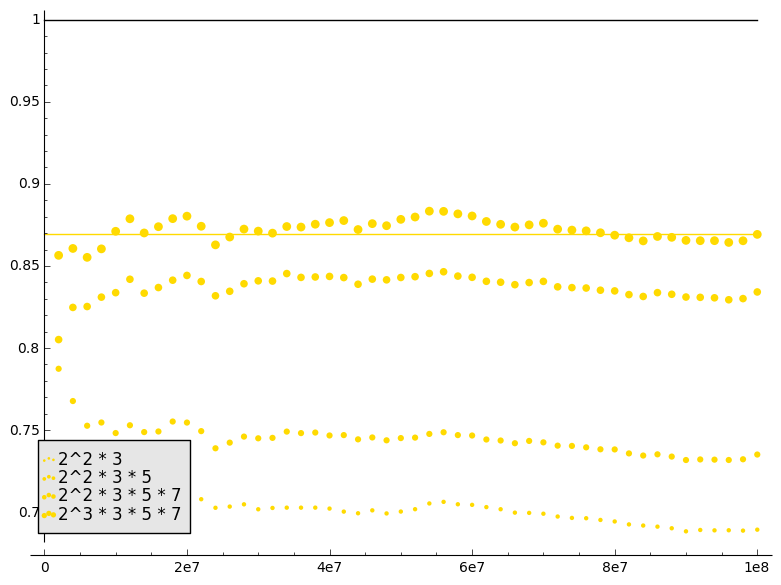}
		\caption{$N=43$}
	\end{subfigure}
	\begin{subfigure}{0.329\textwidth}
		\includegraphics[width =  \linewidth ]{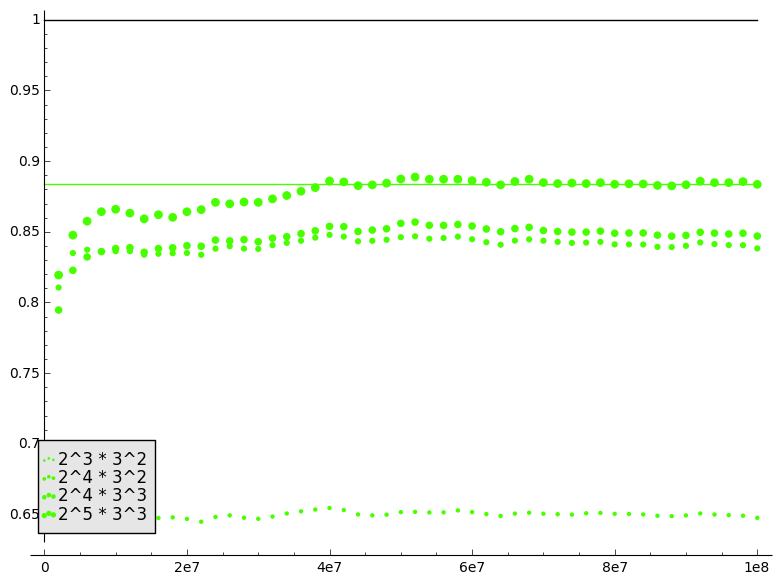}
		\caption{$N=55$}
	\end{subfigure}
	\begin{subfigure}{0.329\textwidth}
		\includegraphics[width =  \linewidth ]{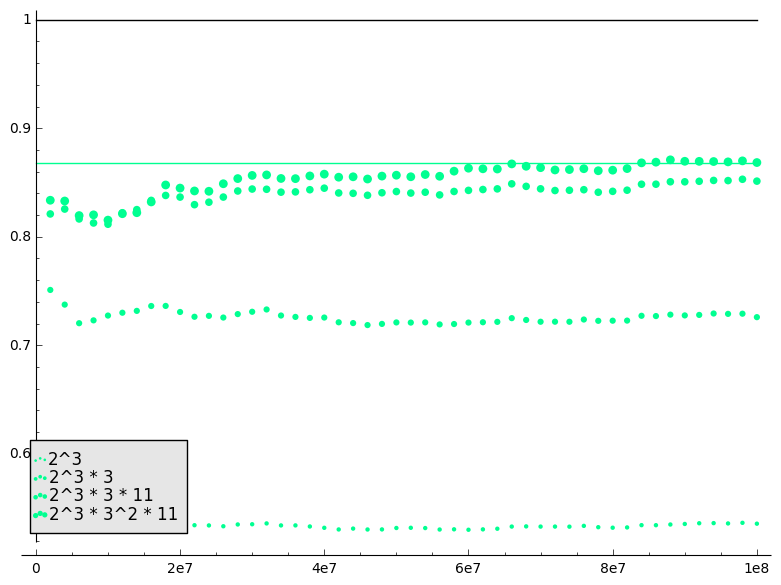}
		\caption{$N=23$}
	\end{subfigure}
	\begin{subfigure}{0.329\textwidth}
		\includegraphics[width =  \linewidth ]{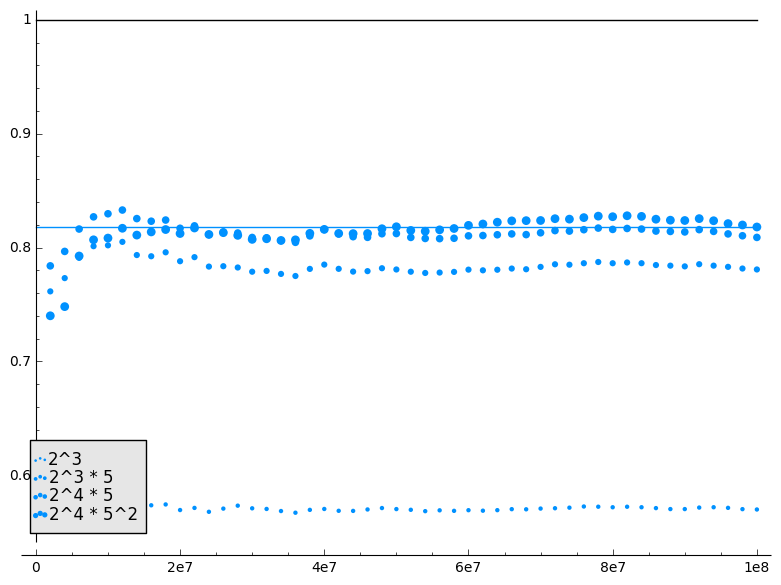}
		\caption{$N=87$}
	\end{subfigure}
	\begin{subfigure}{0.329\textwidth}
		\includegraphics[width =  \linewidth ]{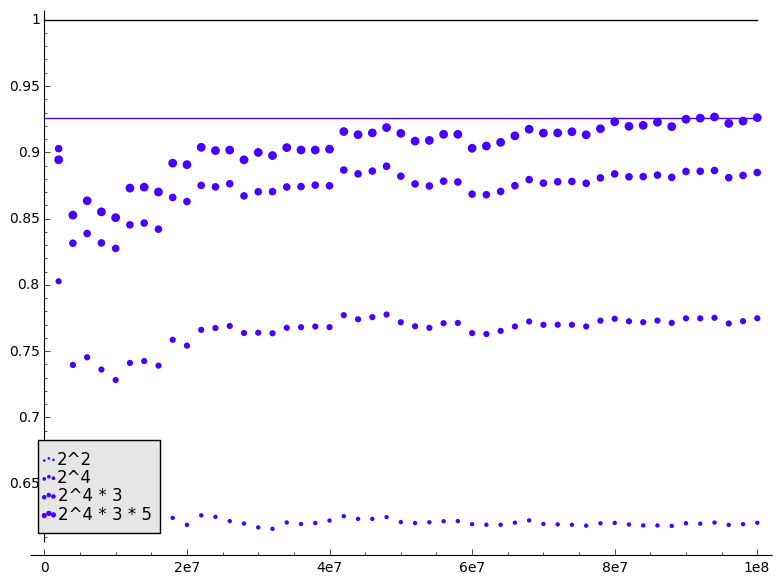}
		\caption{$N=167$}
	\end{subfigure}
\caption{Plots of ${P^{m}(x)\cdot P_{m}(x)}/{P(x)}$ (dots) and the constant functions $1$ (black line) and ${P^{m_N}(10^8)\cdot P_{m_N}(10^8)}/{P(10^8)}$ (coloured line) for each eigenform $f_N$ and various divisors $m$ of $m_N$ for $x$ up to $10^8$.}
\label{fig:Ind}
\end{figure}

\begin{figure}[h!]
	\begin{subfigure}{0.329\textwidth}
		\includegraphics[width =  \linewidth ]{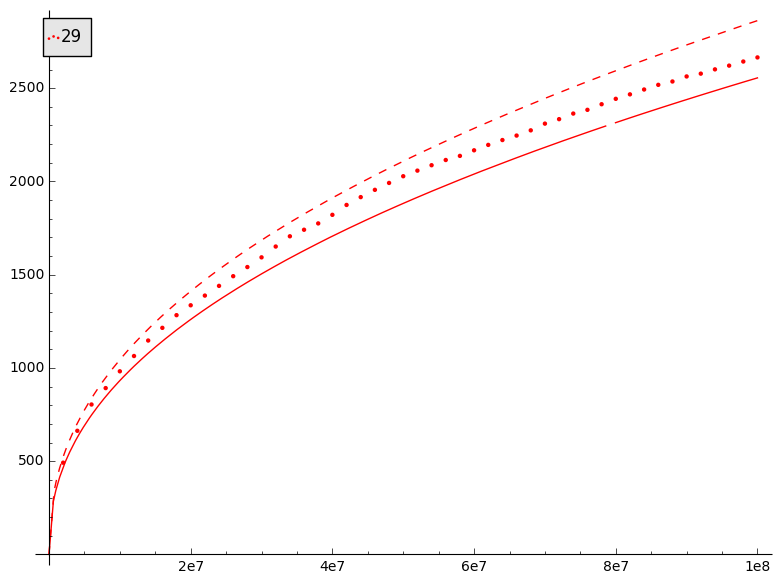}
		\caption{$N=29$}
	\end{subfigure}
	\begin{subfigure}{0.329\textwidth}
		\includegraphics[width =  \linewidth ]{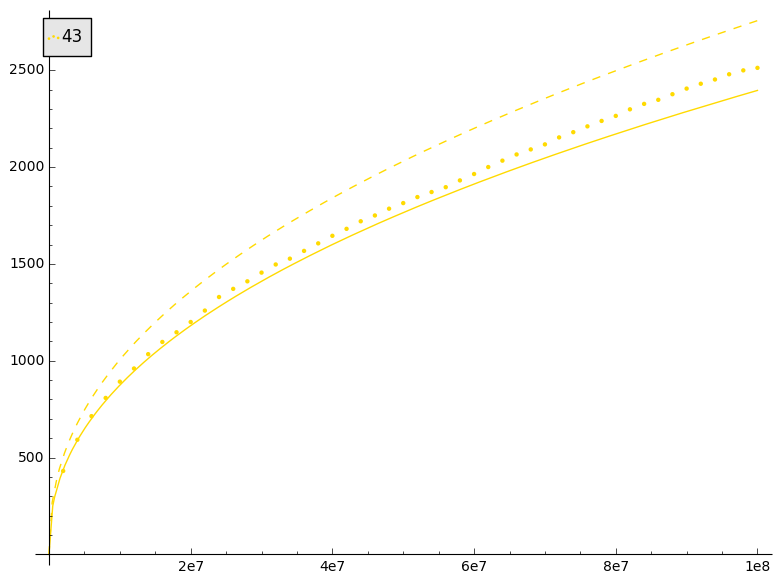}
		\caption{$N=43$}
	\end{subfigure}
	\begin{subfigure}{0.329\textwidth}
		\includegraphics[width =  \linewidth ]{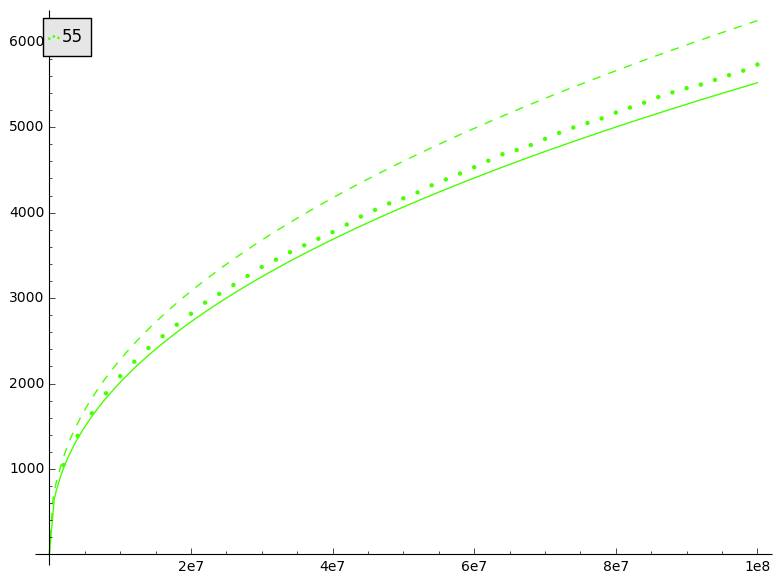}
		\caption{$N=55$}
	\end{subfigure}
	\begin{subfigure}{0.329\textwidth}
		\includegraphics[width =  \linewidth ]{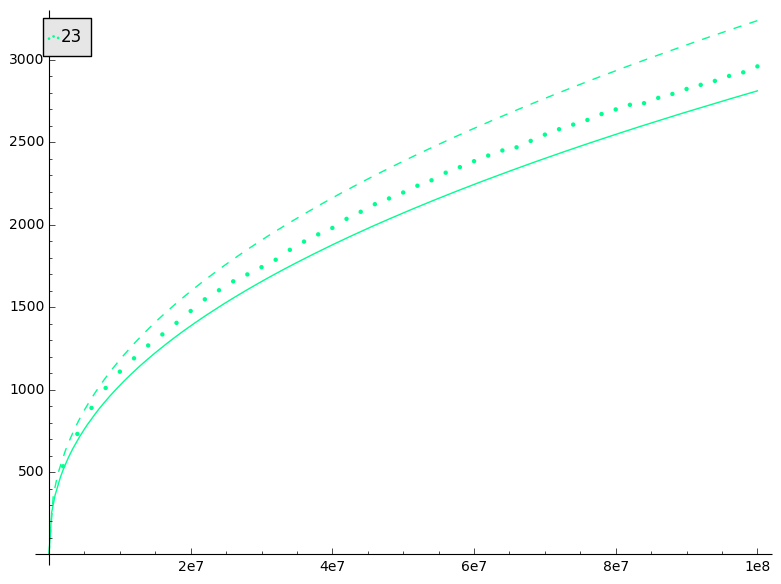}
		\caption{$N=23$}
	\end{subfigure}
	\begin{subfigure}{0.329\textwidth}
		\includegraphics[width =  \linewidth ]{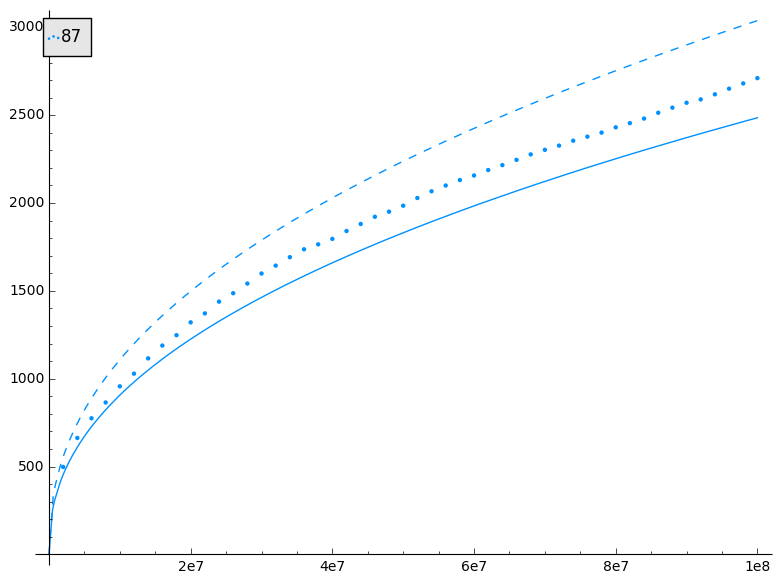}
		\caption{$N=87$}
	\end{subfigure}
	\begin{subfigure}{0.329\textwidth}
		\includegraphics[width =  \linewidth ]{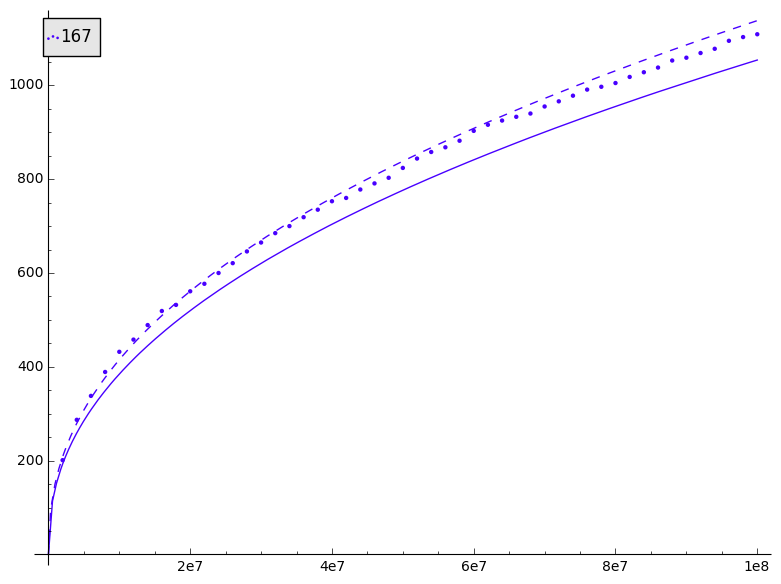}
		\caption{$N=167$}
	\end{subfigure}
\caption{Plots of $\#\{p<x \textnormal{ prime} \mid a_p \in \Q \}$ (dots), $\widehat c_N \frac{\pi (x)}{\sqrt x}$ (full line) and $ 1/\alpha_{m_N}\, \widehat c_N\frac{\pi(x)}{\sqrt x}$ (dashed line)  for all eigenforms $f_N$ with $\widehat c_N$ based on Theorem \ref{theorem:MainResult} .}
\label{fig:MainResultAPi}
\end{figure}
\pagebreak

\clearpage
\appendix
\section{Proof of Proposition \ref{prop:GLKTInert}}\label{app:GLKTInert}
In this appendix we give the proof of the following proposition.
\begin{propn}[\ref{prop:GLKTInert}]
Let $\ell$ be an odd prime and $k$ a positive integer. Then
$$\#\A_{\ell^k,I}^t =\frac{(\ell-1)}{(\ell+1)}\ell^{6k-2}\left(\ell^2+\ell +1-\ell^{-2k} \right).$$ 
\end{propn}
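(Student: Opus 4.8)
The plan is to count matrices $\sigma = \begin{pmatrix} a & b \\ c & d \end{pmatrix} \in GL_2(\Z[\alpha]/\ell^k\Z[\alpha])$, where $\alpha = \sqrt D$, subject to the two linear-over-$\Z[\alpha]$ constraints $\tr\sigma = a+d \in \Z/\ell^k\Z$ and $\det\sigma = ad - bc \in \Z/\ell^k\Z$. Writing each entry in the form $x_1 + \alpha x_2$ with $x_i \in \Z/\ell^k\Z$, the trace condition forces $a_2 + d_2 = 0$, so $d = d_1 - \alpha a_2$; the determinant condition becomes the single equation $a_2(d_1 - a_1) = b_1 c_2 + b_2 c_1$ in $\Z/\ell^k\Z$ (the $\alpha^2 = D$ part of $ad-bc$ is automatically in $\Z/\ell^k\Z$, and the $\alpha$-part must vanish). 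So I would first reduce to counting $8$-tuples $(a_1,a_2,b_1,b_2,c_1,c_2,d_1)$ — note $d_2 = -a_2$ is determined — satisfying this one equation, and then subtract off the locus where $\sigma$ fails to be invertible, i.e. where $\det\sigma = ad-bc$ is not a unit in $\Z/\ell^k\Z$; since $\det\sigma$ already lies in $\Z/\ell^k\Z$, non-invertibility means $\ell \mid \det\sigma$.

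**Counting solutions of the defining equation.**
The core combinatorial step is to count solutions in $\Z/\ell^k\Z$ of $a_2(d_1 - a_1) = b_1 c_2 + b_2 c_1$. I would substitute $e := d_1 - a_1$ (a bijective change of variable, so $a_1$ is then free, contributing a factor $\ell^k$), reducing to counting $(a_2, e, b_1, b_2, c_1, c_2)$ with $a_2 e = b_1 c_2 + b_2 c_1$, then multiply by $\ell^k$ at the end. The standard technique is to stratify by the $\ell$-adic valuation $t = \v(a_2)$ (capped at $k$) and, on each stratum, solve for one of the remaining variables: if $\v(a_2) = t < k$ write $a_2 = \ell^t u$ with $u$ a unit, so $e \equiv \ell^{-?}\cdots$ is not quite right — better to instead solve the equation for $e$ when $a_2$ is a unit and handle $\v(a_2) \geq 1$ by symmetry, or more cleanly count $N := \#\{(x,y,z,w,s) : xy = z + w\cdot s\ \text{after absorbing units}\}$ via a generating-function / exponential-sum argument over $\Z/\ell^k\Z$. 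I expect the cleanest route is: for fixed $(b_1,b_2,c_1,c_2)$, count pairs $(a_2,e)$ with $a_2 e$ equal to the fixed value $n = b_1c_2 + b_2 c_1$, using the known formula $\#\{(x,y)\in(\Z/\ell^k\Z)^2 : xy = n\} = \sum_{j=0}^{\min(\v(n),k)}(\ell^j - \ell^{j-1}) \cdot (\text{stuff}) + [\ell^k \mid n]\cdot(\text{extra})$, then sum over all $(b_i,c_i)$ by further stratifying on $\v(n)$.

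**Handling the invertibility correction and assembling.**
Having counted all $8$-tuples (the set $M^t_{\ell^k}$ in the notation of Lemma \ref{lemma:FlkFinte}), I would then count the sub-locus where $\det\sigma = ad - bc$ satisfies $\ell \mid \det\sigma$; this is the same type of count with $n = \det\sigma$ constrained to $\ell\Z/\ell^k\Z$, and subtracting gives $\#\A^t_{\ell^k,I}$. As a consistency check I would verify the answer against small cases: $k=1$ should give $\#\A^t_{\ell,I} = \frac{\ell-1}{\ell+1}\ell^4(\ell^2+\ell+1-\ell^{-2}) = (\ell-1)\ell^2(\ell^2+\ell+1) - \dots$, which I can cross-check directly by counting $GL_2(\F_{\ell^2})$-matrices with trace and determinant in $\F_\ell$, and against $\#\A_{\ell^k,I} = \ell^{7k-5}(\ell^4-1)(\ell-1)$ from Proposition \ref{prop:GLKInert} by confirming $\#\A^t_{\ell^k,I}/\#\A_{\ell^k,I}$ tends to $\widehat F_\ell/\ell^k$ as in Lemma \ref{lemma:FlkFinte}.

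**Main obstacle.**
The hard part will be the bookkeeping in the valuation stratification: the answer $\frac{\ell-1}{\ell+1}\ell^{6k-2}(\ell^2 + \ell + 1 - \ell^{-2k})$ has a genuinely $k$-dependent term $\ell^{-2k}$ and a denominator $\ell+1$, which means several geometric sums in $\ell$ and in the stratification index must be summed exactly (not just estimated as in Lemma \ref{lemma:FlkFinte}) and the boundary contributions from the strata $\v(a_2) = k$ and $\ell^k \mid n$ must be tracked precisely — a single off-by-one in a summation range will corrupt the $\ell^{-2k}$ term. I would therefore organize the computation so that each geometric sum is displayed in closed form before substitution, and reserve the $k=1$ sanity check as the final verification.
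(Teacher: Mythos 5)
Your reduction is sound as far as it goes: the trace condition forces $d_2=-a_2$, the $\alpha$-component of the determinant gives the single equation $a_2(d_1-a_1)=b_1c_2+b_2c_1$ over $\Z/\ell^k\Z$, and once $\det\sigma$ lies in $\Z/\ell^k\Z$, invertibility over $\Z[\alpha]/\ell^k\Z[\alpha]$ together with $\det\sigma\in(\Z/\ell^k\Z)^\times$ is exactly the condition $\ell\nmid\det\sigma$. But the proposal stops short of the computation that the proposition actually consists of. Both central counts are left as templates (the divisor-type formula for $\#\{(x,y):xy=n\}$ appears with unspecified terms), and, more seriously, the final subtraction is mischaracterized as ``the same type of count''. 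It is not: the bad locus is cut out by the additional constraint $a_1d_1-Da_2^2-b_1c_1-Db_2c_2\equiv 0 \bmod \ell$, a second quadratic condition coupled to the mod-$\ell^k$ equation in a different combination of the variables. You cannot handle it by counting mod $\ell$ and multiplying by a uniform lifting factor, because the hypersurface $a_2e=b_1c_2+b_2c_1$ is singular along the locus where $a_2,e,b_1,b_2,c_1,c_2$ all vanish mod $\ell$, so fibres of reduction mod $\ell$ have non-constant size; the whole valuation stratification must be redone with the extra condition. A structural way to see that this deferred step is the crux: your first count never uses $D$, yet the answer depends on $D$ being a quadratic non-residue mod $\ell$ --- if $D$ were an invertible square, the set you are counting would be isomorphic to $\A_{\ell^k,S}^t$, whose cardinality (Proposition \ref{prop:GLKTSplit}) differs from the asserted formula. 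Hence the entire inert/split distinction, including the $(\ell+1)$ denominator and the $\ell^{-2k}$ term, has to emerge from precisely the subtraction you have waved through.

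For comparison, the paper avoids inclusion--exclusion altogether: it fibres over $b$, stratified by $r=\textnormal{min}\{\v(b),k\}$, shows (Lemma \ref{lemma:PairsAD}) that for $r\ge 1$ a pair $(a,d)$ can be completed to an element of $\A_{\ell^k,I}^t$ if and only if $a+d\in\Z/\ell^k\Z$ and $ad\in(\Z/\ell^k\Z)^\times+\alpha\ell^r\Z/\ell^k\Z$, in which case there are exactly $\ell^{k+r}$ admissible $c$, and counts such pairs as $\ell^{3k-r-2}(\ell-1)(r\ell-r+\ell)$ --- the non-residuosity of $\alpha^2$ enters in that count; the cases $b$ a unit and $b=0$ are done directly, and summing over $r$ yields the closed form. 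To complete your route you would either have to carry out the coupled count of the bad locus in full, tracking the interaction of $\v(a_2)$, $\v(b_1c_2+b_2c_1)$ and the mod-$\ell$ determinant condition, or switch to a fibration that builds the unit-determinant condition in from the start, as the paper does.
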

Before we give the proof we need a lemma. Let $$\v : \Z [ \alpha] \rightarrow \Z $$ be the $\ell$-adic valuation. By abuse of notation we will also use $\v$ to denote the induced  valuation on $\Z[\alpha]/\ell^k\Z[\alpha]$.

\begin{lemma}\label{lemma:PairsAD}
	Consider for any  $k\ge r \ge 1$  the following two conditions on pairs \ctext{$(a,d) \in (\Z[\alpha]/\ell^k\Z[\alpha])^2$}
	\begin{numcases} {}
		 a\cdot d \in \Z/\ell^k\Z^\times + \alpha \ell^r \Z/\ell^k\Z \label{eq:cond1} \\
		 a+d \in \Z/\ell^k\Z. \label{eq:cond2}
	\end{numcases}
	\begin{enumerate}
		\item Let  $b\in \Z[\alpha]/\ell^k\Z[\alpha]$ with $\textnormal{min}\{\v (b),k\} = r$. Then there exists an element $c\in \Z[\alpha]/\ell^k\Z[\alpha]$ such that $\begin{pmatrix} a & b \\ c & d\end{pmatrix} \in \A_{\ell^k,I}^t$ if and only if both \eqref{eq:cond1} and \eqref{eq:cond2} hold. Moreover there exist precisely  $\ell^{k+r}$ distinct such $c$.
		\item There are $$\ell^{3k-r-2}(\ell-1)(r\ell-r+\ell)$$ pairs $(a,d) \in (\Z[\alpha]/\ell^k\Z[\alpha])^2$ satisfying \eqref{eq:cond1} and \eqref{eq:cond2}.
	\end{enumerate}
\end{lemma}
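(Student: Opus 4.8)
The plan is to work directly with the explicit description of $\Z[\alpha]/\ell^k\Z[\alpha]$, writing elements as $x_1+\alpha x_2$ with $x_i\in\Z/\ell^k\Z$, and to translate conditions \eqref{eq:cond1} and \eqref{eq:cond2} into conditions on the coordinates. Write $a=a_1+\alpha a_2$, $d=d_1+\alpha d_2$. Condition \eqref{eq:cond2} says $a_2+d_2=0$, i.e.\ $d_2=-a_2$. Then $a\cdot d = (a_1d_1 - D a_2d_2) + \alpha(a_1d_2+a_2d_1) = (a_1d_1 + D a_2^2) + \alpha a_2(d_1-a_1)$, so \eqref{eq:cond1} becomes the pair of requirements: the $\alpha$-coordinate $a_2(d_1-a_1)$ has $\v \ge r$, and the rational coordinate $a_1d_1+Da_2^2$ is a unit in $\Z/\ell^k\Z$.

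For part (1), given $b$ with $\min\{\v(b),k\}=r$, I would compute $\sigma=\begin{pmatrix} a & b \\ c & d\end{pmatrix}$: the trace is $a+d\in\Z/\ell^k\Z$ exactly when \eqref{eq:cond2} holds, and $\det\sigma = ad - bc$. Since $b$ has valuation exactly $r$ (in the truncated ring, meaning $b=\ell^r b'$ with $b'$ a unit modulo $\ell^{k-r}$), the set of values $bc$ as $c$ ranges over $\Z[\alpha]/\ell^k\Z[\alpha]$ is precisely $\ell^r\Z[\alpha]/\ell^k\Z[\alpha]$, and each value is attained by exactly $\#\ker(c\mapsto bc) = \ell^{2r}\cdot(\text{correction})$ — more precisely $\#\{c : bc=0\} = \ell^{2r}$ when $b$ has a unit part, but one must be careful since $\Z[\alpha]/\ell^k$ is not a field; I would argue via the elementary-divisor description of multiplication by $\ell^r$ on $(\Z/\ell^k\Z)^2$. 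So $\det\sigma$ can be made to lie in $\Z/\ell^k\Z^\times$ (the condition for $\sigma\in\A_{\ell^k,I}$) precisely when $ad$ already lies in $\Z/\ell^k\Z^\times + \ell^r\Z[\alpha]/\ell^k\Z[\alpha]$; combined with the trace condition forcing the $\alpha$-part of $ad$ into $\alpha\ell^r\Z/\ell^k\Z$, this is exactly \eqref{eq:cond1}. Counting the valid $c$: they form a coset-union, and the count $\ell^{k+r}$ should come out as $\#\{c : bc \equiv ad - (\text{unit})\} = \ell^{k}\cdot\ell^{r}$ after accounting for the one free rational coordinate of the target unit and the $\ell^{2r}$-to-one multiplication map — I'd nail this down by a clean dimension count.

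For part (2), I would count pairs $(a,d)$ satisfying \eqref{eq:cond1}–\eqref{eq:cond2} by stratifying on $t:=\min\{\v(a_2),k\}$ (equivalently on $\v(a_2)$, where $a_2=-d_2$). Fix $\v(a_2)=t$ for $0\le t\le k$. The condition $\v\big(a_2(d_1-a_1)\big)\ge r$ becomes $\v(d_1-a_1)\ge r-t$ when $t<r$ (and is automatic when $t\ge r$). The unit condition $a_1d_1+Da_2^2$ is a unit: when $t\ge 1$ this forces $a_1d_1$ to be a unit, hence $a_1,d_1$ units; when $t=0$ one argues similarly with a short case check (using $\ell$ odd so $D$ behaves well). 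Then for each choice of $a_2$ (there are $\sim \ell^{k-t}(1-1/\ell)$ of valuation exactly $t$, i.e.\ $\ell^{k-t-1}(\ell-1)$ for $t<k$ and $1$ for $t=k$), and $a_1$ a unit ($\ell^{k-1}(\ell-1)$ choices), the number of valid $d_1$ is $\ell^{k-(r-t)}$ among units — refining, $\ell^{k-r+t}(\ell-1)/\ell$-ish, which I'd track carefully. Summing $\sum_{t} (\text{count at level }t)$ is a finite geometric-type sum; the term linear in $r$ in the answer $\ell^{3k-r-2}(\ell-1)(r\ell-r+\ell)$ will arise from the $r$ strata $t=0,\dots,r-1$ each contributing an equal amount, plus the $t\ge r$ strata contributing the remaining piece.

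\textbf{Main obstacle.} The delicate point is part (1), specifically the exact count $\ell^{k+r}$ of admissible $c$: multiplication by $b$ on $\Z[\alpha]/\ell^k\Z[\alpha]\cong(\Z/\ell^k\Z)^2$ is not injective and its image and fibers depend on the unit part of $b$ after dividing out $\ell^r$, so one must avoid sloppiness about "valuation $r$" in a non-domain. I expect to handle this by explicitly diagonalizing (Smith normal form) the $\Z/\ell^k$-linear map $c\mapsto bc$, showing its image is $\ell^r\Z[\alpha]/\ell^k\Z[\alpha]$ with all fibers of size $\ell^{2r}$, and then matching against the $\ell^{3r}$-vs-$\ell^{k+r}$ bookkeeping so the final fiber count is correct. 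The counting in part (2) is longer but routine once the stratification is set up; the only subtlety there is the $t=0$ case of the unit condition, dispatched by a one-line argument using that $\ell$ is odd.
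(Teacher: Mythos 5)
Your overall route is the same as the paper's: write everything in coordinates $x_1+\alpha x_2$, in part (1) count the admissible $c$ by analysing multiplication by $b$ (the paper solves the resulting congruence for the $\alpha$-coordinate of $c$, you count fibres of $c\mapsto bc$; since $\Z[\alpha]/\ell^k\Z[\alpha]$ is local with maximal ideal $(\ell)$, any $b$ of valuation $r<k$ is $\ell^r$ times a unit, so no Smith normal form is really needed and your count $\ell^{k-r}$ admissible determinant values times fibres of size $\ell^{2r}$ does give $\ell^{k+r}$, with the case $r=k$, $b=0$ trivial), and in part (2) you stratify by $\v(a_2)$ exactly as the paper does.

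There is, however, a concrete error that would derail precisely the sub-case you defer. The rational coordinate of $a\cdot d$ is $a_1d_1+\alpha^2 a_2d_2=a_1d_1+Da_2d_2$, not $a_1d_1-Da_2d_2$; after imposing $d_2=-a_2$ the unit condition reads $a_1d_1-Da_2^2\in(\Z/\ell^k\Z)^\times$, whereas you carry $a_1d_1+Da_2^2$. In the stratum $\v(a_2)=0$ with $a_1$ a unit, condition \eqref{eq:cond1} forces $d_1\equiv a_1\pmod{\ell^r}$, hence $a_1d_1\equiv a_1^2\pmod{\ell}$, and the unit condition is then automatic precisely because $\alpha^2=D$ is a quadratic non-residue modulo $\ell$ (the defining property of $\alpha$ in the inert case), so that $a_1^2\equiv Da_2^2\pmod{\ell}$ is impossible; this, and not merely ``$\ell$ odd'', is what your ``short case check'' must invoke. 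With your sign the check becomes $(a_1a_2^{-1})^2\not\equiv -D\pmod{\ell}$, which genuinely fails for some pairs whenever $-D$ is a quadratic residue (e.g.\ for every $\ell\equiv 3\pmod 4$), so the $t=0$ stratum would be undercounted and the total would not match $\ell^{3k-r-2}(\ell-1)(r\ell-r+\ell)$. Two smaller points: in the strata $1\le t<r$ there is no extra unit restriction on $d_1$ to ``track carefully'' --- $d_1\in a_1+\ell^{r-t}\Z/\ell^k\Z$ with $a_1$ a unit and $r-t\ge 1$ is automatically a unit, so the count is exactly $\ell^{k-r+t}$, and inserting an extra factor $(\ell-1)/\ell$ would spoil the final formula; and in the $t=0$ stratum you must also include the sub-case $a_1$ a non-unit (there the unit condition holds because $Da_2^2$ is a unit), which supplies the extra $\ell^{3k-r-2}(\ell-1)$ term that turns $(\ell-1)^2$ into the factor $\ell(\ell-1)$ needed for the stated total.
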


\begin{proof}
\begin{enumerate}
	\item The 'only if' statement is immediate. Conversely suppose that $a$ and $d$ are elements of $\Z[\alpha]/\ell^k \Z[\alpha]$ satisfying \eqref{eq:cond1} and \eqref{eq:cond2}. Let $b \in \Z[\alpha]/\ell^k \Z[\alpha]$ and $k\ge r \ge 1$ with $ r =\min {\v(b)} k$. For every $c$ in $\Z[\alpha]/\ell^k\Z[\alpha]$ denote
	$$\sigma_c :=\begin{pmatrix} a & b \\ c & d \end{pmatrix}.$$
	Condition \eqref{eq:cond2} is equivalent with $\tr \sigma_c \in \Z/\ell^k\Z$. So it suffices to show that there are $\ell^{k+r}$ distinct $c$ such that $\det \sigma_c \in \Z/\ell^k\Z^\times$.

	Let $e_1$, $e_2$, $b_1$, $b_2$, $c_1$ and $c_2$ be elements of $\Z/\ell^k\Z$ such that
	\begin{align*}
		a\cdot d &= e_1 + \ell^re_2\alpha, \\
		b &=\ell^r(b_1 + b_2\alpha ), \\
		c &=c_1 + c_2\alpha . \\
	\end{align*}
	Then $\det \sigma_c \in \Z/\ell^k\Z^\times$ if and only if
	\begin{align*}
		e_1 - \ell^r(b_1c_1 + b_2 c_2 \alpha^2) & \in \Z/\ell^k\Z^\times \textnormal{ and} \\
		\ell^r(e_2-b_2c_1 - b_1c_2) & = 0.
	\end{align*}
	Note that \eqref{eq:cond1} implies that $e_1$ is invertible. So the first condition is satisfied.  Moreover  $\v(b) \ge r$ so either $b_1$ or $b_2$ is a unit. Without loss of generality suppose that $b_1$ is a unit. Then $\det \sigma_c \in \Z/\ell^k\Z$ if and only if 
	$$\ctext{c_2 \in b_1^{-1}(e_2-b_2c_1) + \ell^{k-r}\Z/\ell^k\Z.}$$
	In particular there exist $\ell^{k+r}$ distinct such $c$ such that $\sigma_c \in \A_{\ell^k,I}^t$.
	\item Let $a_1$, $a_2$, $d_1$ and $d_2$ be elements of $\Z/\ell^k\Z$ such that 	$a= a_1 + a_2 \alpha$  and $d = d_1 + d_2 \alpha$.
	Then conditions \eqref{eq:cond1} and \eqref{eq:cond2} hold if and only if 
	\begin{align*}
		\begin{cases}
		a_1d_1+a_2d_2 \alpha^2 \in \Z/\ell^k\Z^\times \\
		a_1d_2+a_2d_1 \in \ell^r\Z/\ell^k\Z \\
		a_2 +d_2 = 0.
		\end{cases}
	\end{align*}
	{Using $d_2 =- a_2$ in the first expressions we obtain}
	\begin{numcases}{}
		a_1d_1 - a_2 ^2\alpha^2 \in \Z/\ell^k\Z^\times \label{eq:cond3}\\
		a_2(a_1- d_1) \in \ell^r \Z/\ell^k\Z. \label{eq:cond4}
	\end{numcases}
	We distinguish three cases depending on the valuation of $a_2$.
	\begin{enumerate}
		\item If $a_2= 0$, then \eqref{eq:cond4} is satisfied and 
		\begin{align*}
			\eqref{eq:cond3} &\Leftrightarrow a_1d_1 \in \Z/\ell^k\Z^\times.
		\end{align*}
		So there are $\ell^{2k-2}(\ell-1)^2$ such pairs $(a,d)$.
		\item If $r \le \v(a_2)  = s <k$, then $a_2\in \ell^r\Z/\ell^k \Z$ so \eqref{eq:cond4} is satisfied and
		\begin{align*}
			\eqref{eq:cond3} & \Leftrightarrow a_1d_1 \in \Z/\ell^k\Z^\times.
		\end{align*}
		So for every $r\le s < k$ there are $\ell^{3k-3-s}(\ell-1)^3$ pairs $(a,d)$ with $\v (a_2) = s$ that satisfy \eqref{eq:cond1} and \eqref{eq:cond2}.
		\item If $0<\v(a_2) = s<r$ then
			\begin{align*} & \ctext{\begin{cases}
					\eqref{eq:cond3} \Leftrightarrow a_1d_1 \in \Z/\ell^k\Z^\times\\
					\eqref{eq:cond4} \Leftrightarrow a_1-d_1 \in \ell^{r-s}(\Z/\ell^k\Z^\times).
			\end{cases}}
			\intertext{These conditions are equivalent to} 
			&  \begin{cases}
					a_1 \in \Z/\ell^k\Z^\times \\
					d_1 \in a_1 + \ell^{r-s} \Z/\ell^k\Z.
			\end{cases} \end{align*}	
		So for every $0<s<r$ there are $\ell^{3k-2-r}(\ell-1)^2$ such pairs $(a,d)$ with $\v(a_2) = s$.		
		\item Finally suppose that $a_2$ is invertible. If $a_1 \in \ell\Z/\ell^k \Z$, then \eqref{eq:cond4} implies \eqref{eq:cond3}. Hence the remaining condition is
		\begin{align*}
		d_1 \in a_1 +\ell^r \Z/\ell^k \Z.
		\end{align*}
		So there are $\ell^{3k-2-r}(\ell-1)$ pairs $(a,d)$ with $a_2$ a unit and $a_1$ not invertible.
		
		If $a_1$ is an invertible element we have
		\begin{align*} & \begin{cases}
					\eqref{eq:cond3} \Leftrightarrow d_1 \not \in a_1^{-1}a_2^2 \alpha^2+  \ell\Z/\ell^k\Z\\
					\eqref{eq:cond4} \Leftrightarrow d_1 \in a_1 + \ell^{r}\Z/\ell^k\Z.
			\end{cases} 
			\intertext{Note that the intersection of $a_1^{-1}a_2^2 \alpha^2+  \ell\Z/\ell^k\Z$ and $a_1 + \ell^{r}\Z/\ell^k\Z $ is empty. Indeed, otherwise $a_1  \equiv a_1^{-1}a_2^2 \alpha^2 \mod \ell$ or $(a_1a_2^{-1})^2 \equiv \alpha^2 \mod \ell$. Which contradicts the fact that $\alpha^2$ is a quadratic non-residue modulo $\ell$. Hence}
			& \eqref{eq:cond4} \Leftrightarrow d_1 \in a_1 + \ell^r\Z\ell^k \Z \Rightarrow \eqref{eq:cond3}.
		\end{align*}
		In particular there are $\ell^{3k-2-r}(\ell- 1)^2$ pairs $(a,d)$ with both $a_1$ and $a_2$ invertible elements. 
		
		Adding the  two cases, $a_1$ being a unit and $a_1$ not being a unit, yields
		\begin{align*}
			\ell^{3k-2-r}(\ell- 1)^2 + \ell^{3k-2-r}(\ell-1) & = \ell^{3k-2-r}(\ell- 1)(\ell- 1 +1) \\
			& =\ell^{3k-1-r}(\ell- 1)
		\end{align*}
		distinct pairs $(a,d)$ satisfying \eqref{eq:cond1} and \eqref{eq:cond2} with $a_1$ invertible.
	\end{enumerate}
	Finally we sum over all cases
	\begin{align*}
			 & \ell^{2k-2}(\ell-1)^{2} + \sum_{s=r}^{k-1} \ell^{3k-3-s}(\ell -1)^{3} + \sum_{ s=1}^{r-1} \ell^{3k-2-r}(\ell -1)^{2} + \ell^{3k-1-r}(\ell-1) \\
			 & = \ell^{2k-2}(\ell-1)\Big(\ell-1 + (\ell-1)\sum_{s=r}^{k-1}\ell^{k-1-s}(\ell-1) + \sum_{s=1}^{r-1}\ell^{k-r}(\ell-1) + \ell^{k-r+1}  \Big). \\
			 \intertext{Note that the first summation is telescopic so the total sum yields }
			 &   \ell^{2k-2}(\ell-1)\Big( \ell -1 + (\ell-1)(\ell^{k-r}-1) + (r-1)(\ell-1)\ell^{k-r}+ \ell^{k-r+1} \Big){}\\
			 & = \ell^{2k-2}(\ell-1)\Big(r(\ell-1) \ell^{k-r} + \ell^{k-r+1}  \Big){} \\
			 &= \ell^{3k-2-r}(\ell-1)\Big(r(\ell-1) + \ell  \Big). \qedhere
	\end{align*}	
\end{enumerate}

\end{proof}
\begin{proof}[Proof of Proposition \ref{prop:GLKTInert} ] $\\$
	Let $\sigma = \begin{pmatrix} a& b \\ c & d \end{pmatrix} \in GL_2(\Z[\alpha]/\ell^k\Z[\alpha])$. Then $\sigma \in \A_{\ell^k,I}^t$ if and only if
	\begin{numcases}{}
		ad-bc \in \Z/\ell^k \Z^\times \label{eq:cond5} \\
		a+d \in \Z/\ell^k \Z \label{eq:cond6}.
	\end{numcases}
	We distinguish three cases depending on the valuation of $b$.
	\begin{enumerate}
		\item If $b$ is a unit then
		\begin{align*} \begin{cases} 
		 \eqref{eq:cond5} \Leftrightarrow c\in adb^{-1} + \Z/\ell^k \Z^\times \\
		 \eqref{eq:cond6} \Leftrightarrow d\in -a+\Z/\ell^k \Z.
		\end{cases} \end{align*}
		So there are $$\ell^{2k} \cdot \ell^{2k-2}(\ell^2 -1) \cdot \ell^{k-1}(\ell-1) \cdot \ell^k = \ell^{6k-3}(\ell-1)(\ell^2-1)$$
		matrices $\sigma \in \A_{\ell^k,I}^t$ with $b$ a unit.
		
		\item If $b = 0$, then 
		\begin{align*} \begin{cases} 
		 \eqref{eq:cond5} \Leftrightarrow ad\in  \Z/\ell^k \Z^\times \\
		 \eqref{eq:cond6} \Leftrightarrow d+a \in \Z/\ell^k \Z.
		\end{cases} \end{align*}
		By Lemma \ref{lemma:PairsAD} with $r=k$ there are $\ell^{2k-2}(\ell-1)(k\ell-k+\ell)$ such pairs $(a,d)$. Again by Lemma \ref{lemma:PairsAD} there are $\ell^{2k}$ elements $c$ in $\Z_{\ell^2}$ for every such pair $(a,d)$ so that  $\begin{pmatrix} a & 0 \\ c & d \end{pmatrix}$ is an element of $\A_{\ell^k,I}^t$. In particular we obtain 
		$$\ell^{4k-2}(\ell-1)(k\ell- k +\ell)$$
		elements of $\A_{\ell^k, I}^t$ with $b=0$.
		
		\item Let $0<\v(b) = r <k$. Then by  part 1 of Lemma \ref{lemma:PairsAD} there exists $\ell^{k+r}$ elements $c$ for each $b$ and  each pair \eqref{eq:cond1} and \eqref{eq:cond2} such that $\begin{pmatrix} a &  b \\ c & d \end{pmatrix} \in \A_{\ell^k,I}^t$. By part 2 of the same lemma there exist $\ell^{3k-2-r}(\ell-1)(r\ell - r + \ell)$ such pairs $(a,d)$. Moreover there are $\ell^{2k-2-2r}(\ell^2 -1)$ elements $b$ in $\Z[\alpha]/\ell^k \Z[\alpha]$ with $\v(b) = r$. So there are
		\begin{align*}
		\ell^{3k-2-r}(\ell-1)(r\ell-r+\ell)\cdot \ell^{2k-2-2r}(\ell^2-1) \cdot \ell^{k+r} \\ = \ell^{4(k-1)+2(k-r)}(\ell-1)(\ell^2-1)(r\ell - r +\ell)
		\end{align*}
		elements in $\A_{\ell^k,I}^t$ with $\v(b) = r$ for each $0<r<k$.
	\end{enumerate}
	It remains to take the sum over all three cases. Note that 
	\begin{align*}
		\#\A_{\ell^k,I}^t  &= \ell^{6k-3}(\ell-1)(\ell^2-1) + \sum_{r=1}^{k-1}\ell^{4(k-1)+2(k-r)}(\ell-1)(\ell^2-1)(r\ell - r +\ell) 
		\\&+ \ell^{4k-2}(\ell-1)(k\ell- k +\ell) \\
		=& \ell^{4(k-1)}(\ell -1) \Big(k\ell-k+\ell + \sum_{r=0}^{k}\ell^{2(k-r)}(\ell^{2}-1)(r(\ell -1)+\ell)    \Big).\\
		\intertext{We split the summation in two terms} 
		\#\A_{\ell^k,I}^t=& \ell^{4(k-1)}(\ell -1) \Big( k(\ell-1)+\ell  \\ 
		&+ (\ell-1)\sum_{r=0}^{k}r\ell^{2(k-r)}(\ell^{2}-1) +  \sum_{r=0}^{k}\ell^{2(k-r)+1}(\ell^{2}-1) \Big). \\
		\intertext{Expand the first summation and note that the  second summation is telescopic}
		\#\A_{\ell^k,I}^t=& \ell^{4(k-1)}(\ell -1) \Big( k(\ell-1)+\ell  \\ 
		&+ (\ell-1)(\ell^{2k}+\ell^{2k-2}+\cdots +\ell^{2}-k) + (\ell^{2k+3} -\ell) \Big) \\ 
		=& \ell^{4(k-1)}(\ell -1) \Big(k(\ell-1)+\ell   \\ 
		&+ (\ell-1)\frac{\ell^{2(k+1) }-1}{\ell^{2}-1} -(\ell-1)(1+k)  +\ell^{2k+3} -\ell \Big)\\
		=& \ell^{4(k-1)}(\ell -1) \Big( \frac{\ell^{2(k+1) }-1}{\ell+1} -(\ell-1) + \ell^{2k+3}\Big) \\
		=& \ell^{4(k-1)}\frac{(\ell -1)}{(\ell+1)} \Big( \ell^{2(k+1)} -1 - (\ell + 1)(\ell-1) + \ell^{2k+4} + \ell^{2k+3} \Big){} \\
		= & \ell^{4(k-1)}\frac{(\ell -1)}{(\ell+1)} \Big(\ell^{2k+4} - \ell^{2}+ \ell^{2k+3} +  \ell^{2k+2}\Big){} \\
		= & \ell^{6k-2}\frac{(\ell -1)}{(\ell+1)} \Big(\ell^{2} +\ell +1 -\ell^{-2k} \Big). \qedhere
	\end{align*} 
\end{proof}

\section{Proof of Proposition \ref{prop:GLKTSplit}}\label{app:GLKTSplit}
In this appendix we prove Proposition \ref{prop:GLKTSplit}.
\begin{propn}[\ref{prop:GLKTSplit}]
Let $\ell$ be an odd prime and $k$ a positive integer. Then
$$\# \A_{\ell^k,S}^t = \frac{(\ell-1)}{(\ell+1)} \ell^{6k-4} \left( \ell^4+\ell^3-\ell^2-2 \ell- \ell^{-2k+2} \right).$$
\end{propn}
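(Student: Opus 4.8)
The strategy parallels the inert case treated in Appendix~\ref{app:GLKTInert}: I would reduce the count to an elementary lattice‑point count by completing the square. Grouping the pairs $(\tau,\tau')\in\A_{\ell^k,S}^t$ by their common characteristic polynomial $X^2-tX+n$ (necessarily with $n\in(\Z/\ell^k\Z)^\times$, since both matrices are invertible) gives
\[\#\A_{\ell^k,S}^t=\sum_{t\in\Z/\ell^k\Z}\ \sum_{n\in(\Z/\ell^k\Z)^\times}M(t,n)^2,\qquad M(t,n):=\#\bigl\{\tau\in M_{2\times2}(\Z/\ell^k\Z)\mid \tr\tau=t,\ \det\tau=n\bigr\},\]
so it suffices to compute $M(t,n)$. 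Writing $\tau=\begin{pmatrix}a&b\\ c&d\end{pmatrix}$ and using $d=t-a$, the determinant condition reads $bc=at-a^2-n$, whence $M(t,n)=\sum_{a}r(at-a^2-n)$ with $r(e):=\#\{(b,c)\in(\Z/\ell^k\Z)^2\mid bc=e\}$. A short computation splitting on $\v(b)$ shows $r(e)$ depends only on $j:=\v(e)$, with $r(e)=(j+1)\ell^{k-1}(\ell-1)$ for $j<k$ and $r(0)=k\ell^{k-1}(\ell-1)+\ell^k$; in particular $r(-e)=r(e)$.

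Next, since $\ell$ is odd I would substitute $a=u+t/2$, turning the argument $at-a^2-n$ into $-(u^2-\Delta/4)$ with $\Delta:=t^2-4n$, so that
\[M(t,n)=\Psi(\Delta),\qquad \Psi(\Delta):=\sum_{u\in\Z/\ell^k\Z}r\bigl(u^2-\tfrac{\Delta}{4}\bigr),\]
and in particular $M(t,n)$ depends only on $\Delta$. The core of the argument is to evaluate $\Psi(\Delta)$ as a function of $s:=\v(\Delta)$ and, when $s$ is even, of the quadratic residue class of the unit part $\Delta\ell^{-s}$ modulo $\ell$. The valuations $\v(u^2-\Delta/4)$ are governed by Hensel's lemma: if $\Delta/4$ is a unit that is a square mod $\ell$ there are exactly two residues $\pm u_0\bmod\ell$ near which $u^2-\Delta/4$ acquires positive valuation, and for $u\equiv u_0$ one has $\v(u^2-\Delta/4)=\v(u-u_0)$, which distributes over $\{1,\dots,k-1,\infty\}$ with the obvious multiplicities; if $\Delta/4$ is a non‑square unit then every $u^2-\Delta/4$ is a unit; and if $s\ge1$ one sorts $u$ by $i:=\v(u)$ and, in the single borderline subcase $2i=s$, recurses on the unit part $u'^2-\Delta\ell^{-2i}/4$ inside the smaller ring $\Z/\ell^{k-s}\Z$. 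In every case the sum collapses to a finite geometric series of the shape $\sum_j(j+1)\ell^{-j}$, giving a closed form for $\Psi(\Delta)$; I would isolate this as a lemma, the analogue of Lemma~\ref{lemma:PairsAD}.

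Finally, for fixed $t$ the map $n\mapsto\Delta=t^2-4n$ is a bijection from $(\Z/\ell^k\Z)^\times$ onto $\{\Delta\mid\Delta\not\equiv t^2\bmod\ell\}$; interchanging the order of summation yields
\[\#\A_{\ell^k,S}^t=\sum_{\Delta\in\Z/\ell^k\Z}\Psi(\Delta)^2\bigl(\ell^k-\ell^{k-1}N(\Delta\bmod\ell)\bigr),\qquad N(c):=\#\{s\bmod\ell\mid s^2=c\}\in\{0,1,2\}.\]
Grouping $\Delta$ by $s=\v(\Delta)$ and by square class, inserting the values of $\Psi$ from the previous step, and summing the resulting finite geometric sums gives, after simplification, the asserted formula $\frac{\ell-1}{\ell+1}\ell^{6k-4}(\ell^4+\ell^3-\ell^2-2\ell-\ell^{-2k+2})$. (The bookkeeping can be checked at $k=1$, where the conjugacy‑class structure of $M_{2\times2}(\F_\ell)$ gives directly $\#\A_{\ell,S}^t=(\ell-1)\ell^2(\ell^3-\ell-1)=\frac{\ell-1}{\ell+1}\ell^2(\ell^4+\ell^3-\ell^2-2\ell-1)$, in agreement.) I expect the main obstacle to be the evaluation of $\Psi(\Delta)$ for $\ell\mid\Delta$: there $\v(u^2-\Delta/4)$ behaves recursively, depending on the parity of $\v(\Delta)$ and on whether the unit part of $\Delta$ is a square, and it is precisely the boundary contributions — the $r(0)$ terms and the truncations at exponent $k$ — that produce the lower‑order correction $-\ell^{-2k+2}$, so these must be tracked with care throughout the summation in Proposition~\ref{prop:GLKTInert}'s split analogue.
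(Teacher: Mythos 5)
Your proposal is correct in outline and reaches the stated count by the same global strategy as the paper's Appendix~\ref{app:GLKTSplit}: group the pairs by their common characteristic polynomial, show that the number of matrices with a given characteristic polynomial depends only on the discriminant (through its valuation and square class), and sum the squares of these fiber counts against the number of polynomials in each discriminant class. The genuine difference is in how the fiber count is organized. The paper fixes representative polynomials $f_{i,j}$, counts them in Lemma~\ref{lemma:Plk}, and computes $\#M_{i,j}$ by summing over $(b,c)$ the number of roots $a$ of $f_{i,j}+bc$, which requires the root-counting Lemma~\ref{lemma:DiscModlk} and a three-way split on $\v(bc)$ versus $\v(\textnormal{Disc})$, with the independence of the choice of $f_{i,j}$ emerging only a posteriori. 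You sum over $a$ first, using the product-counting function $r(e)$ (your values are correct; $r(0)=k\ell^{k-1}(\ell-1)+\ell^k$ agrees with the paper's count $((k+1)\ell-k)\ell^{k-1}$ of pairs with $bc=0$), and completing the square makes the dependence on $\Delta=t^2-4n$ alone immediate; the paper's polynomial census is replaced by the weight $\ell^k-\ell^{k-1}N(\Delta\bmod\ell)$, which does reproduce the $\#P_{i,j}$ (for instance it gives $\tfrac{\ell-1}{2}\ell^{2k-1}$ for unit non-square $\Delta$). What each buys: your completion of the square gives the $\Delta$-dependence for free, while the paper's route isolates the reusable root-count lemma; the Hensel-type valuation analysis needed to evaluate $\Psi(\Delta)$ is exactly Lemma~\ref{lemma:DiscModlk} in disguise, so neither route avoids it. The one caveat is that the computational heart — the closed form of $\Psi(\Delta)$ for $\ell\mid\Delta$ (including $\Delta\equiv 0$) and the concluding summation that produces the $-\ell^{-2k+2}$ term — is only sketched; the plan is sound (your $\Psi$ agrees with the paper's $\#M_{0,0}$ and $\#M_{0,2}$ in the unit-discriminant cases, and your $k=1$ check $(\ell-1)\ell^{2}(\ell^{3}-\ell-1)$ is correct), but those cases must be carried out with the same care the paper spends in Lemma~\ref{lemma:Mlk} and the final telescoping sums.
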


Before we prove this proposition we need three intermediate results. 
\begin{lemma}\label{lemma:DiscModlk}
	Let $\ell$ be an odd prime and $k$ a positive integer. Let $f$ be a quadratic monic polynomial in $\Z/\ell^k\Z[X]$ and  $D$ be the discriminant of $f$. Then
	\begin{align*}
		\#\big \{r \in \Z/\ell^k\Z \mid f(r) = 0\big \} = \begin{cases}
		\ell^{\lfloor k/2 \rfloor} & \textnormal{if } D=0 \\
		2\ell^i & \textnormal{if } D= u^2 \ell^{2i} \textnormal{ with } u \in \Z/\ell^k\Z^\times \textnormal{ and } 0\leq i \leq k/2\\
		0&\textnormal{else.}  
		\end{cases}
	\end{align*}
\end{lemma}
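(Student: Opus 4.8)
The plan is to count roots of $f(X) = X^2 + bX + c$ in $\Z/\ell^k\Z$ by completing the square. Since $\ell$ is odd, $2$ is a unit, so the substitution $X = Y - b/2$ transforms $f$ into $Y^2 - \Delta$ where $\Delta = b^2/4 - c$ is (up to a unit square) the discriminant $D = b^2 - 4c$; more precisely $D = 4\Delta$, and $4$ is a unit, so counting roots of $f$ is equivalent to counting $Y \in \Z/\ell^k\Z$ with $Y^2 \equiv \Delta \pmod{\ell^k}$, and $\v(\Delta) = \v(D)$ while $\Delta$ is a square in $\Z/\ell^k\Z^\times$ iff $D$ is. So the whole problem reduces to counting square roots of a fixed element $\Delta$ modulo $\ell^k$.

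First I would dispose of the case $\v(\Delta) \ge k$, i.e.\ $\Delta = 0$ in $\Z/\ell^k\Z$, which corresponds exactly to $D = 0$ (here one must be slightly careful: $D=0$ in $\Z/\ell^k\Z$ means $\v(D) \ge k$). Then $Y^2 \equiv 0 \pmod{\ell^k}$ holds iff $\v(Y) \ge \lceil k/2 \rceil$, and the number of such $Y$ in $\Z/\ell^k\Z$ is $\ell^{k - \lceil k/2\rceil} = \ell^{\lfloor k/2 \rfloor}$, giving the first branch. Next, if $\v(\Delta)$ is odd, or $\v(\Delta) = 2i$ with $2i < k$ but the unit part of $\Delta$ is a non-residue mod $\ell$, then $Y^2 = \Delta$ has no solution: writing $Y = \ell^j v$ forces $2j = \v(\Delta)$ (so $\v(\Delta)$ even) and then $v^2 \equiv (\Delta/\ell^{2i}) \pmod{\ell^{k-2i}}$, which by Hensel's lemma is solvable iff the reduction mod $\ell$ is a square; this is the "else" branch.

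Finally, in the main case $D = u^2\ell^{2i}$ with $u$ a unit and $2i \le k$ (equivalently $\Delta = w\ell^{2i}$ with $w \equiv$ a square mod $\ell$): every solution $Y$ has $\v(Y) = i$, so write $Y = \ell^i v$ with $v$ determined modulo $\ell^{k-i}$; the equation becomes $v^2 \equiv w \pmod{\ell^{k-2i}}$. By Hensel's lemma this congruence has exactly $2$ solutions modulo $\ell^{k-2i}$ (using that $\ell$ is odd and $w$ is a unit residue; if $2i = k$ the congruence is vacuous mod $\ell^0$ and one checks directly there are still $2\ell^i$ values of $Y$, namely all $Y$ with $\v(Y)=i$... — more precisely one lifts each of these $2$ residues mod $\ell^{k-2i}$ to $\ell^i$ residues of $v$ mod $\ell^{k-i}$). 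Hence the number of admissible $v$ mod $\ell^{k-i}$ is $2\ell^i$, and each gives a distinct $Y = \ell^i v$ in $\Z/\ell^k\Z$, for a total of $2\ell^i$ roots, which is the middle branch.

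The main obstacle is the bookkeeping of valuations near the boundary $2i = k$ (so that $\lfloor k/2\rfloor$ and $\lceil k/2\rceil$ appear correctly) and making sure the three stated cases are genuinely exhaustive and mutually exclusive as conditions on $D$ in $\Z/\ell^k\Z$ — in particular that "$D = 0$" is read as $\v(D) \ge k$ and that when $0 \le i \le k/2$ is an integer the representation $D = u^2\ell^{2i}$ is unambiguous. Everything else is a routine Hensel-lifting count.
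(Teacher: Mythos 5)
Your proposal is correct and follows essentially the same route as the paper: complete the square (using that $2$ is a unit), reduce to counting solutions of $Y^2 = D/4$ in $\Z/\ell^k\Z$, and settle that count by valuation bookkeeping plus Hensel lifting; the paper writes the solution sets out explicitly as cosets $-\tfrac b2 \pm \tfrac u2 \ell^i + \ell^{k-i}\Z/\ell^k\Z$ instead of lifting the two unit square roots, but it is the same computation. One correction: your parenthetical about the boundary $2i=k$ is wrong as written --- there the solutions of $Y^2=0$ are exactly the $Y$ with $\v(Y)\ge \lceil k/2\rceil$, of which there are $\ell^{\lfloor k/2\rfloor}$, not $2\ell^{i}$, and not every solution has $\v(Y)=i$. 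That boundary case is precisely the $D=0$ branch, which you already handle correctly, so the middle branch (your claim that every solution has $\v(Y)=i$, and the Hensel count of two unit square roots modulo $\ell^{k-2i}$) should be restricted to $2i<k$; this is also how the paper's proof proceeds, by disposing of $D=0$ before the case $D=u^2\ell^{2i}$ with $u$ a unit.
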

\begin{proof}
	Let   $f = X^2 +bX + c$ be a polynomial in $\Z/\ell^k \Z[X]$. 

	If $f$ has at least one zero, say $a$, then
	\begin{align*}
		f(a) = 0 &\Leftrightarrow 4a^2 +4ba+4c = 0 \\
		&\Leftrightarrow (2a+b)^2  = b^2 -4c.
	\end{align*}
	In particular the discriminant of $f$ is a square.
	Let  $D = b^2 - 4 c$ be the discriminant of $f$. We distinguish two cases based on the valuation of $D$.
	
	If $D$ is an invertible element, the result follows from Hensel's lemma. So we may assume that the valuation of $D$ is at least one. Let $\widehat f$ be the reduction of $f$ modulo $\ell$. Then $\widehat f$ has $-b/2$ as a double zero. In particular any zero of $f$ in $\Z/\ell^k\Z$ is of the form $-b/2 + a$ with $a\in \ell\Z/\ell^k \Z$. Let us compute
	\begin{align*}
		f\left(-\frac b 2 + a\right) = 0 &\Leftrightarrow (-\frac b 2 + a)^2+ b(-\frac b 2 + a) + c = 0 \\
		& \Leftrightarrow \frac{b^2 } 4 - \frac{b^2 } 2 + c +a^2  \\
		& \Leftrightarrow -\frac D 4 +a^2= 0. \\
	\end{align*}
	If $D = 0$, the zeros of $f$ are precisely the elements of $-\frac{b}{2} + \ell^{\lceil k/2 \rceil}\Z/\ell^k\Z$. In particular $f$ has $\ell^{\lfloor k/2 \rfloor}$ zeros.
	
	Finally suppose that $D= u^2 \ell^{2i}$ with $u$ an invertible element. Then the zeros of $f$ are $$-\frac b 2 \pm \frac{u} 2 \ell^i + \ell^{k-i} \Z/\ell^k\Z.$$
	This set has cardinality $2\ell^i$.
\end{proof}

For every $0\le i \le k$ let $P_i$ be the set of monic quadratic polynomials with coefficients in $\Z/\ell^k\Z$ with invertible constant term and $\min {\v( \textnormal{Disc}\,f) } k = i $. Moreover consider the following subsets of $P_i$
\begin{align*}
P_{i,0} &=  \Big \{ f \in P_i \mid \textnormal{Disc}\,f \textnormal{ not a  quadratic residue in } \Z/\ell^k\Z\Big \},\\
P_{i,2} & =  \Big \{ f \in P_i \mid \textnormal{Disc}\,f \textnormal{  a  quadratic residue in } \Z/\ell^k\Z \Big \},
\end{align*}
where $\text{Disc}\, f$ denotes the discriminant of the quadratic polynomial $f$.
Note that  the set $P_{i,2}$ is empty  for every odd $i$  moreover so is $P_{k,0}$  since $0$ is a quadratic residue. 

\begin{lemma}\label{lemma:Plk}

Let $\ell$ be an odd prime and $k$ a positive integer. Then
$$ \# P_{i,j} = 
\begin{cases} 
\frac{(\ell-1)} 2 \ell^{2k-1} & \textnormal{ if } i= 0 \textnormal{ and } j = 0 \\
\frac{(\ell-1)(\ell-2)} 2 \ell^{2k-2} & \textnormal{ if } i= 0 \textnormal{ and } j = 2 \\
(\ell-1)^2 \ell^{2k-2t-1} & \textnormal{ if } i= 2t-1 < k \textnormal{ and } j = 0 \\
\frac{(\ell-1)^2} 2 \ell^{2k-2t-2} & \textnormal{ if } i= 2t < k \\
(\ell-1)\ell^{k-1} & \textnormal{ if } i=k \textnormal{ and } j=2 \\
0 & \textnormal{ else}.
\end{cases} $$
\end{lemma}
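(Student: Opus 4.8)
The plan is to count, for each prescribed discriminant valuation $i$ and quadratic-residue type $j$, the monic polynomials $f = X^2+bX+c$ with $c \in (\Z/\ell^k\Z)^\times$ and $\v(\mathrm{Disc}\,f) = i$, where $\mathrm{Disc}\,f = b^2-4c$. The key observation is that the map $(b,c) \mapsto (b, D)$ with $D = b^2 - 4c$ is a bijection from $\{(b,c) : c \in (\Z/\ell^k\Z)^\times\}$ onto $\{(b,D) : b^2 - D \in 4(\Z/\ell^k\Z)^\times\}$, since $\ell$ is odd and $4$ is a unit. Thus $c$ is invertible exactly when $b^2 \not\equiv D \pmod \ell$. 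So the problem reduces to counting pairs $(b,D)$ with $\v(D) = i$ (when $i<k$) or $D=0$ (when $i=k$), with $D$ of the prescribed QR-type, and with the extra constraint $b^2 \not\equiv D \pmod \ell$.

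First I would treat the count of admissible $D$'s alone. For $i = 0$: the invertible squares form a subgroup of index $2$ in $(\Z/\ell^k\Z)^\times$, so there are $\tfrac12(\ell-1)\ell^{k-1}$ squares (type $j=2$) and equally many non-squares (type $j=0$). For $i = 2t$ with $0 < 2t < k$: write $D = \ell^{2t}u$ with $u$ a unit; $D$ is then automatically a square iff $u$ is, so $P_{i,2}$ gets $\tfrac12(\ell-1)\ell^{k-2t-1}$ choices of $u$ and $P_{i,0}$ is empty (for even $i$, all such $D$ with square unit part are squares — wait, I must be careful: for even $i$ both types occur according to whether $u$ is a QR, giving $\tfrac12(\ell-1)\ell^{k-2t-1}$ each). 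For $i = 2t-1$ odd with $i<k$: $D = \ell^{2t-1}u$ is never a square, so only $j=0$ occurs, with $(\ell-1)\ell^{k-2t}$ choices of $u$. For $i = k$: only $D = 0$, which is a square, so only $j = 2$.

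Next I would count, for each fixed $D$, the number of $b \in \Z/\ell^k\Z$ with $b^2 \not\equiv D \pmod \ell$. If $\v(D) \ge 1$ (in particular $D$ a non-unit, which covers every case with $i \ge 1$), the forbidden condition is $b \equiv 0 \pmod \ell$, excluding $\ell^{k-1}$ values and leaving $\ell^k - \ell^{k-1} = (\ell-1)\ell^{k-1}$ admissible $b$. If $i = 0$ and $D$ is a square modulo $\ell$ (necessarily true when $D$ is a square in $\Z/\ell^k\Z$, type $j=2$), then $b^2 \equiv D \pmod \ell$ has exactly two solutions mod $\ell$, excluding $2\ell^{k-1}$ values of $b$ and leaving $(\ell-2)\ell^{k-1}$. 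If $i=0$ and $D$ is a non-square mod $\ell$ (type $j=0$), then $b^2 \equiv D \pmod \ell$ is impossible, so all $\ell^k$ values of $b$ are admissible. Multiplying the $D$-count by the $b$-count in each case and simplifying — e.g. for $i=0,j=0$: $\tfrac12(\ell-1)\ell^{k-1}\cdot \ell^k = \tfrac12(\ell-1)\ell^{2k-1}$; for $i=0,j=2$: $\tfrac12(\ell-1)\ell^{k-1}\cdot(\ell-2)\ell^{k-1} = \tfrac12(\ell-1)(\ell-2)\ell^{2k-2}$; for $i=2t-1<k$: $(\ell-1)\ell^{k-2t}\cdot(\ell-1)\ell^{k-1} = (\ell-1)^2\ell^{2k-2t-1}$; for $i=2t<k$: $\tfrac12(\ell-1)\ell^{k-2t-1}\cdot(\ell-1)\ell^{k-1} = \tfrac12(\ell-1)^2\ell^{2k-2t-2}$; for $i=k,j=2$: $1\cdot(\ell-1)\ell^{k-1}$ — reproduces exactly the table, and all remaining index combinations give the empty set.

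The only subtle point, and the one I would state carefully, is the interaction between the QR-type of $D$ as an element of $\Z/\ell^k\Z$ and the residue $D \bmod \ell$: for a unit $D$, being a square in $\Z/\ell^k\Z$ is equivalent to being a square mod $\ell$ (Hensel), so the $b$-count genuinely depends only on $j$; for non-units with even valuation the same reduction to the unit part applies, while for odd valuation the square case cannot arise at all. This bookkeeping is routine but is where an off-by-a-factor-of-two error would creep in, so I would make the Hensel reduction explicit before assembling the product.
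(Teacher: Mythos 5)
Your proof is correct and reproduces the table exactly, but it is organized differently from the paper's argument, so a short comparison is worthwhile. You perform the substitution $(b,c)\mapsto(b,D)$ with $D=b^2-4c$ once and for all, so that invertibility of the constant term becomes the single congruence condition $b^2\not\equiv D\pmod{\ell}$, and then you compute each entry as (number of $D$ of prescribed valuation and quadratic-residue type) times (number of admissible $b$). The paper instead argues case by case in the other order: for unit discriminant it counts monic quadratics over $\F_\ell$ by splitting type (irreducible, respectively split with distinct nonzero roots, giving $\tfrac{\ell(\ell-1)}{2}$ and $\tfrac{(\ell-1)(\ell-2)}{2}$ polynomials) and lifts by Hensel; for $0<i<k$ it fixes a unit $b$ and counts $c\in(b/2)^2+\ell^i(\Z/\ell^k\Z)^\times$; for $i=k$ it notes $c=b^2/4$. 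The two computations are equivalent --- your ``fix $D$, count $b$'' is the mirror image of the paper's ``fix $b$, count $c$'' --- but your version treats all valuations uniformly and isolates the only nontrivial input (for odd $\ell$, a unit, or the unit part of an even-valuation element, is a square in $\Z/\ell^k\Z$ iff it is a quadratic residue mod $\ell$) in one place, which is precisely where a factor-of-two slip could occur; you flag this correctly. One cosmetic point: the inline self-correction in your count of even-valuation discriminants should be cleaned up in a final write-up, the correct conclusion being the one you end with, namely that both residue types occur equally often, $\tfrac12(\ell-1)\ell^{k-2t-1}$ each, and you should also record explicitly the (easy) reasons the remaining index pairs give the empty set: elements of odd valuation less than $k$ are never squares, and $0$ is a square, as the paper notes just before the lemma.
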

\begin{proof}
Let $\ell$ be an odd prime and $k$ a positive integer. Let $f = X^2 + bX +c $ be a quadratic monic polynomial. We distinguish three cases depending on the valuation of the discriminant $D$ of $f$.
\begin{enumerate}

\item Suppose that $D$ is a unit. Then $D$ is a square modulo $\Z/\ell^k\Z$ if and only if $D$ is a quadratic residue modulo $\ell$. So it suffices  to count quadratic polynomials $f\in \F_\ell[X]$ with $f(0)$ a unit and $D$ a quadratic residue or non-residue. There are $\frac{(\ell-1)\ell}{2}$  monic quadratic irreducible polynomials with $f(0) \neq 0$ in $\F_\ell[X]$ and  $\frac{(\ell-1)(\ell-2)}{2}$ monic quadratic  polynomials with $f(0) \neq 0$ and distinct roots. The result for $\v(D)=0$  follows from Hensel's lemma.

\item Let $0<i<k$. Then  
$$f \in P_i \Leftrightarrow c \in (b/2)^2 + \ell^i(\Z/\ell^k\Z^\times).$$
In particular $b$ is a unit. For each of the  $(\ell- 1)\ell^{k-1}$  different choices of $b$, there are $(\ell-1)\ell^{k-i-1}$ choices for $c$. Hence $\#P_i = (\ell-1)^2 \ell^{2k-i-2}$.
If  $i = 2t-1$ is odd, then $P_{2t-1,2}$ is empty so  $\#P_{2t-1,0}=(\ell - 1)^2 \ell^{2k-2t-1}$. If $i = 2t$ is even then the discriminant is a square for half of the polynomials. Hence $ \#P_{2t,0} = \#P_{2t,2}= \frac{(\ell - 1)^2} 2 \ell^{2k-2t-2}$.

\item Finally suppose that  $i = k$, then $D = 0 $ if and only if $c = \frac {b^2} 4$. Hence for every unit $b$ there is  precisely one polynomial in the set $P_k$ with discriminant zero. \qedhere
\end{enumerate}
\end{proof}
For every non-empty set $P_{i,j}$ fix a polynomial $f_{i,j} \in P_{i,j}$. Denote $$M_{i,j} :=\{\sigma \in GL_2(\Z/\ell^k \Z) \mid \textnormal{char. poly.}\ \sigma = f_{i,j} \}.$$
If no such polynomial $f_{i,j}$ exists, $M_{i,j}$ is defined as the empty set.
\begin{lemma}\label{lemma:Mlk}
Let $\ell$  be an odd prime and $k$ a positive integer. Then
$$\#M_{i,j} = 
\begin{cases}
(\ell- 1)\ell^{2k-1} & \textnormal{if } i = 0 \textnormal{ and } j = 0 \\
(\ell+1) \ell^{2k-1} & \textnormal{if } i = 0 \textnormal{ and } j = 2 \\
(\ell^{t+1} + \ell^t -\ell - 1)\ell^{2k-t-1} & \textnormal{if } i = 2t-1 \textnormal{ and } j = 0\\
(\ell^{t+1} + \ell^t -2 )\ell^{2k-t-1} & \textnormal{if } i = 2t \textnormal{ and } j = 0 \\
(\ell+1 )\ell^{2k-1} & \textnormal{if } i = 2t \textnormal{ and } j = 2 \\
(\ell^{m+1} + \ell^m -1) \ell^{3m-1} & \textnormal{if } i = k = 2m  \textnormal{ and } j=2\\
(\ell^{m+1}+\ell^m-1)\ell^{3m+1} & \textnormal{if } i = k = 2m+1 \textnormal{ and } j=2 \\
0 & \textnormal{else.}
\end{cases}$$
In particular $\#M_{i,j}$ does not depend on the  polynomial $f_{i,j}$.
\end{lemma}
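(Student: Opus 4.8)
\emph{Proof proposal.} The plan is to count the matrices with a prescribed characteristic polynomial directly, and to reduce that count to the root counts already supplied by Lemma~\ref{lemma:DiscModlk}. Write the fixed polynomial as $f_{i,j}(X) = X^2 - \tau X + \nu$, so that $\tau$ is the prescribed trace and $\nu \in (\Z/\ell^k\Z)^\times$ the prescribed determinant (it is a unit since $P_{i,j}$ consists of polynomials with invertible constant term). A matrix $\sigma = \begin{pmatrix} a & b \\ c & d \end{pmatrix}$ over $\Z/\ell^k\Z$ has characteristic polynomial $f_{i,j}$ exactly when $a + d = \tau$ and $ad - bc = \nu$. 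Thus $d = \tau - a$ is forced, and for each $a \in \Z/\ell^k\Z$ the remaining freedom is a pair $(b,c)$ with $bc = ad - \nu = -f_{i,j}(a)$. A short direct computation shows that the number of solutions $(b,c) \in (\Z/\ell^k\Z)^2$ of $bc = w$ depends on $w$ only through its $\ell$-adic valuation, with the convention that $w=0$ has valuation $k$: calling this number $N(v)$, one has $N(v) = (v+1)\ell^{k-1}(\ell-1)$ for $0 \le v < k$ and $N(k) = \ell^k + k\ell^{k-1}(\ell-1)$. Since $N$ is insensitive to the sign of $w$, this gives
\[
\#M_{i,j} \;=\; \sum_{a \in \Z/\ell^k\Z} N\bigl(v_\ell(f_{i,j}(a))\bigr).
\]

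Next I would reorganise this sum by the value of $v_\ell(f_{i,j}(a))$. For $0 \le j \le k$ the condition $\ell^j \mid f_{i,j}(a)$ depends only on $a \bmod \ell^j$, so there are exactly $\ell^{k-j} r_j$ residues $a \bmod \ell^k$ satisfying it, where $r_j := \#\{a \bmod \ell^j \mid f_{i,j}(a) \equiv 0 \bmod \ell^j\}$ is given by Lemma~\ref{lemma:DiscModlk} applied modulo $\ell^j$. Writing $A_j$ for the number of $a$ with $v_\ell(f_{i,j}(a)) = j$, one has $A_j = \ell^{k-j}r_j - \ell^{k-j-1}r_{j+1}$ for $j < k$ and $A_k = r_k$, and a summation by parts collapses $\sum_j N(j)A_j$ to
\[
\#M_{i,j} \;=\; \ell^{k-1}(\ell-1)\sum_{j=0}^{k-1}\ell^{k-j} r_j \;+\; \ell^k r_k .
\]
It then remains to substitute, in each of the six cases of the lemma, the explicit sequence $(r_j)_{0 \le j \le k}$ coming from Lemma~\ref{lemma:DiscModlk} --- determined by $v_\ell(\textnormal{Disc}\,f_{i,j})$ and by whether $\textnormal{Disc}\,f_{i,j}$ is a square, which is precisely the information carried by the label $(i,j)$ --- and to evaluate the resulting finite sums. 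In every case $(r_j)_j$, hence $\#M_{i,j}$, depends only on $(i,j)$ and not on the chosen representative $f_{i,j}$, which is the last assertion of the lemma.

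The conceptual content is exhausted by the parametrisation and Lemma~\ref{lemma:DiscModlk}; the genuinely laborious part, which I expect to be the main obstacle, is the closed-form evaluation of $\sum_j \ell^{k-j} r_j$ when $v_\ell(\textnormal{Disc}\,f_{i,j}) > 0$. There $r_j = \ell^{\lfloor j/2\rfloor}$ over an initial range of indices, so $\ell^{k-j} r_j = \ell^{k}\ell^{-\lceil j/2\rceil}$ on that range and the sum breaks into two interleaved geometric progressions followed by a constant (or zero) tail, the tail being governed by whether $v_\ell(\textnormal{Disc})$ is even and the corresponding unit is a square. Keeping track of the parities of $v_\ell(\textnormal{Disc})$ and of $k$ is exactly what produces the odd/even split in the statement and, in the boundary case $i = k$, the terms of size $\ell^{3m}$ and $\ell^{3m\pm1}$. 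This is careful bookkeeping with finite geometric sums, and nothing deeper is needed.
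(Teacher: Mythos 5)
Your argument is correct, and it takes a genuinely different route from the paper's. The paper runs the count the other way around: it fixes the off-diagonal pair $(b,c)$ and counts the diagonal entries as roots of the shifted polynomial $f_{i,j}+bc$, whose discriminant is $\textnormal{Disc}\,f_{i,j}-4bc$; this forces a three-way split according to whether the valuation of $bc$ is smaller than, equal to, or larger than $i$, and in the middle regime one must count quadratic residues in cosets of the form $D+\ell^{i}(\Z/\ell^k\Z)^\times$. You instead fix $a$ (hence $d=\tau-a$) and count pairs with $bc=-f_{i,j}(a)$, which depends only on the valuation of $f_{i,j}(a)$; your counts $N(v)=(v+1)\ell^{k-1}(\ell-1)$ for $v<k$ and $N(k)=\ell^{k}+k\ell^{k-1}(\ell-1)$ are correct (the latter matches the paper's count of pairs with $bc=0$), the lifting count $\ell^{k-j}r_j$ and the Abel summation yielding $\#M_{i,j}=\ell^{k-1}(\ell-1)\sum_{j=0}^{k-1}\ell^{k-j}r_j+\ell^{k}r_k$ are correct, and since the root-count sequence $(r_j)_j$ from Lemma \ref{lemma:DiscModlk} (applied at modulus $\ell^{j}$) is determined by the label $(i,j)$ alone, independence of the chosen representative is indeed automatic. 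I checked that the remaining substitutions reproduce all seven entries of the table; for instance, for $i=2t-1$, $j=0$ one has $r_j=\ell^{\lfloor j/2\rfloor}$ for $j\le 2t-1$ and $r_j=0$ beyond, and $\ell^{k-1}(\ell-1)\bigl(\ell^{k}+\ell^{k-t}+2(\ell^{k-1}+\cdots+\ell^{k-t+1})\bigr)$ collapses to $(\ell^{t+1}+\ell^{t}-\ell-1)\ell^{2k-t-1}$, as stated. What your version buys is a single uniform closed formula and no residue-in-coset bookkeeping; what the paper's version buys is that Lemma \ref{lemma:DiscModlk} is invoked only at the top modulus $\ell^{k}$. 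One cosmetic point for a final write-up: your summation index $j$ collides with the square-class label $j$ in $M_{i,j}$, so rename one of them.
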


\begin{proof}
	If $j=2$ and $i$ is odd, the set $M_{i,j}$ is empty by definition and so is the set $M_{k,0}$. We prove the remaining cases.
	
	Let $\ell$ be an odd prime and $k$ a positive integer.  Let $f = X^2 -TX +S  \in \Z/\ell^k \Z[X]$ with  $S$ invertible. Let $D=T^2 -4S$ be the discriminant of $f$. A matrix $\begin{pmatrix} a & b \\ c & d \end{pmatrix}$ has characteristic polynomial $f$ if and only if
	\begin{align*}
	\begin{cases}d= T-a \\ a^2 - aT + S+bc = 0. \end{cases}
	\end{align*}
	So for every pair $(b,c)$ and every zero of $ X^2 - TX  +S+ bc$ there exists precisely one matrix with characteristic polynomial $f$. By Lemma \ref{lemma:DiscModlk} the number of zeros of $f+bc$ depends only on  the discriminant of $f+bc$. The discriminant of $f+bc$ is $D -4bc$. We distinguish three cases depending on the valuation of $bc$. Define
	\begin{align*}
	M_{i,j}^< &:=  \{ \sigma \in M_{i,j} \mid \v(bc) <i\},\\
	M_{i,j}^= &: = \{ \sigma \in M_{i,j} \mid \min {\v(bc)} k  =i\}, \\
	M_{i,j}^> &: =\{ \sigma \in M_{i,j} \mid \min {\v(bc)} k  >i\}.
	\end{align*}
	Then for every pair $i$ and $j$ 
	$$\# M_{i,j} = \# M_{i,j}^< + \# M_{i,j}^= +\# M_{i,j}^>.$$ 
	First we compute the cardinality of each of the sets $M_{i,j}^<$, $M_{i,j}^=$ and $M_{i,j}^>$ for all pairs $i$ and $j$. 
	\begin{enumerate}
		\item Suppose that $0 \le \v(bc) <\min {\v(D)} k =i \le k$. Then $\v(D -4bc) = \v(bc)$. We need only consider tuples $(b,c)$ such that the valuation of $bc$ is even. Indeed by Lemma \ref{lemma:DiscModlk}   monic quadratic polynomials with odd valuated discriminant have no zeros. 
		For each integer $s$ with $0\leq 2s < i $ there exist $(2s+1)(\ell-1)^2 \ell^{2k-2s-2}$ pairs $(b,c)$ such that $\v(bc) = 2s$. Moreover for exactly half of these pairs $D-4bc$ is a square. In this case there exist $2\ell^s$ solution for the polynomial $f+bc$ by Lemma \ref{lemma:DiscModlk}.
		
		So for each $0\le s<\ceil{i/2}-1$ we obtain $(2s+1)(\ell-1)^2\ell^{2k-s-2}$ different matrices with characteristic polynomial $f$. Summing over all $s$ yields 
		\begin{align*}
			&\sum_{s=0}^{\lceil i/2 \rceil -1}(2s+1)(\ell-1)^2\ell^{2k-s-2} \\
			&= 2(\ell-1)\sum_{s=0}^{\lceil i/2 \rceil -1} s(\ell-1)\ell^{2k-s-2} + (\ell-1)\sum_{s=0}^{\lceil i/2 \rceil -1} (\ell-1)\ell^{2k-s-2} \\
			& = 2(\ell-1)\Big( \ell^{2k-2} + \ell^{2k-3} + \cdots + \ell^{2k-\lceil i/2\rceil} - (\lceil i /2\rceil-1)\ell^{2k-\lceil i/2\rceil-1} \Big) \\
			& \phantom{=}+ (\ell-1)\Big(\ell^{2k-1}  - \ell^{2k - \lceil i/2 \rceil -1}\Big) \\
			& = (\ell-1)\ell^{2k- \lceil i/2 \rceil -1} \Big( 2\frac{(\ell^{ \lceil i/2\rceil} - 1 )}{\ell-1} -2\lceil i/2\rceil  +\ell^{\lceil i /2 \rceil}  - 1\Big) \\
			& = \ell^{2k-\lceil i/2 \rceil-1} \Big( 2\ell^{\lceil i/2\rceil}  -2 - 2\ceil{i/2}(\ell-1) + \ell^{\lceil i/2\rceil}(\ell-1)-(\ell-1)\Big)\\
			& = \ell^{2k-\lceil i/2\rceil - 1} \Big( \ell^{\ceil{i/2}} (\ell+1) - 2\ceil{i/2}(\ell-1) - \ell - 1  \Big).
		\end{align*}
		So we obtain 
		$$\# M_{i,j}^< = \begin{cases} 0 & \textnormal{ if } i = 0 \\ \ell^{2k-\lceil i/2\rceil - 1} \Big( \ell^{\ceil{i/2}} (\ell+1) - 2\ceil{i/2}(\ell-1) - \ell - 1  \Big)& \textnormal{else}. \end{cases}$$
		
		\item Suppose that $0 \le  \min {\v (bc)}k  = i = \min {\v(D)} k  \le k$. Then the valuation of $D-4bc$ will be at least $i$ and may be bigger depending on $bc$. 
		
		If $i<k$ there are $(i+1)(\ell-1)^2 \ell^{2k-i - 2}$ pairs $(b,c)$ such that $\v(bc) = i$. Since every element of $D+\ell^i (\Z/\ell^k\Z^\times)$ occurs an equal amount of times  as $D-4bc$ for  all $b$ and $c$ in $\Z/\ell^k \Z$ with $\v(bc) = i$ it suffices to count the number of squares with given valuation. In particular each element of $D + \ell^i (\Z/\ell^k\Z^\times)$ will occur precisely 
		\begin{align*}
		\frac{\# \left\{ (b,c)\in (\Z/\ell^k \Z)^2 \mid \v(bc) = i\right\}}{\# -D+\ell^i (\Z/\ell^ k \Z^\times)} & = \frac{(i+1)(\ell-1)^2\ell^{2k-i-2}} {(\ell- 1)\ell^{k-i-1}} \\ &= (i+1)(\ell-1)\ell^{k-1}
		\end{align*} times as the discriminant of $f+bc$ for all pairs $(b,c)$ with $\min {\v(bc)} k  = i$.

		If $D$ is not a square, then for any $i \le 2s <  k$ there are $\frac 1 2 (\ell-1 ) \ell^{k-2s-1}$   squares in $D + \ell^i \Z/\ell\Z^\times$ with valuation $ 2s$.  Each square with valuation $2s<k$ induces $2\ell^s$ distinct zeros and discriminant equal to zero  induces $\ell^{\floor {k/2}}$ zeros. Summing over all cases yields
		\begin{align*}
		\#M_{i,0}^==&(i+1)(\ell-1)\ell^{k-1}\Big(\ell^{\floor {k/2}} + \sum_{s = \ceil {i/2}}^{\ceil{k/2}-1} \frac{(\ell- 1)} 2 \ell^{k-2s-1} \cdot 2\ell^s\Big) \\
		& = (i+1)(\ell- 1) \ell^{k-1} \big( \ell^{\floor {k/2}} + \ell^{k-\ceil {i/2}}  - \ell^{k- \ceil{k/2} } \big) \\
		& = (i+1)(\ell- 1) \ell^{2k-\ceil{i/2}-1}.
		\end{align*}

		If $D$ is a square, then there are only $\frac{1}{2}(\ell-3)\ell^{k-i-1}$ squares with valuation $i$ as all elements in $D+\ell^{i+1}\Z/\ell^k\Z$  are quadratic residues modulo $\ell^k$ with valuation $i$ and  $$\Big(D+\ell^{i+1}\Z/\ell^k\Z  \Big)\cap \Big(D + \ell^i (\Z/\ell\Z^\times)\Big)= \varnothing. $$
		The number of squares with valuation $2s>i$ is the same as in the case that $D$ is a quadratic non-residue modulo $\ell^k$. So summing over all $s$ with $i\le 2s \le k$ yields
		\begin{align*}
		\#M_{i,2}^==&(i+1)(\ell-1)\ell^{k-1} \Big(\ell^{\floor {k/2}} +(\ell- 3)\ell^{k-i/2-1}+ \sum_{s =i/2+1}^{\ceil{k/2}-1} (\ell- 1)\ell^{k-s-1} \Big) \\
		& = (i+1)(\ell- 1) \ell^{k-1} \big( \ell^{\floor {k/2}}+\ell^{k-i/2} -3\ell^{k-i/2-1} + \ell^{k-i/2-1}  - \ell^{\floor {k/2} } \big) \\
		& = (i+1)(\ell- 1)(\ell-2) \ell^{2k-i/2-2}.
		\end{align*}

		Finally suppose that  $D= 0 = bc$. Then one checks that the number of pairs $(b,c)$ such that $bc = 0$ is $ \big((k+1)\ell- k\big)\ell^{k-1}$.
		For each of these pairs the discriminant of the polynomial $f+bc$ equals the discriminant of the polynomial $f$ which is zero. In particular every pair $(b,c)$ induces $\ell^{\floor {k/2}}$ distinct matrices with characteristic polynomial $f$.

		So the number of matrices with characteristic polynomial $f$ and $ 0\le \min{\v(bc)} k  = \min {\v(D)} k =i \le k $ is 
		$$\# M_{i,j}^= = \begin{cases}
		(i+1)(\ell- 1)(\ell-2) \ell^{2k-i/2-2} & \textnormal{if } i<k \textnormal{ and } j=2\\
		\big((k+1)\ell-k\big)\ell^{k + \floor{k/2} -1} & \textnormal{if } i=k \\
		(i+1)(\ell- 1) \ell^{2k-\ceil{i/2}-1} & \textnormal{else. }
		\end{cases}$$
		\item Suppose that $0\le i=\v(D)< \min {\v(bc)} k  \le k$. If $D$ is not a square in $\Z/\ell^k\Z$, then $D-4bc$ is not a square since $D-4bc \equiv D  \mod \ell^{\v(bc)}$ and $D$ is a quadratic non-residue modulo $\ell^{\v(bc)}$. So if $D$ is not a square, no matrices exists.

		If $D$ is a square with even valuation $i$ one checks that there exists $\ell^{2k-i - 2}\big((i+2)\ell - i - 1\big)$ pairs $(b,c)$ such that $i<\min {\v (bc)} k  \le k$. For each pair there exists $2\ell^{i/2}$ zeros of the polynomial $f+bc$. Hence we find 
		$$\# M_{i,j}^> = \begin{cases}2 \ell^{2k-i/2-2}\big((i+2) \ell - i -1\big) & \textnormal{if } i<k \textnormal{ and } j = 2 \\
		0 & \textnormal{else. } \end{cases}$$ 
		\end{enumerate}
		We compute the sum $\# M_{i,j} = \# M_{i,j}^< + \# M_{i,j}^= +\# M_{i,j}^>$ for each pair $i$ and $j$.	
		\begin{enumerate}
		\item Suppose that  $\v(D) = 0$ and $D$ is a quadratic non-residue. Then there exist no matrices with characteristic polynomial $f$ unless $\v(bc) = 0$. In this case
		\begin{align*}
		 \#M _{0,0} &= (\ell-1)\ell^{2k-1}.
		 \intertext{
		\item If $\v(D) = 0 $ and $D$ is a quadratic residue. There exist $(\ell- 1)(\ell-2)\ell^{2k-2}$  matrices with $\v(bc)  = 0$ and $2\ell^{2k-2}(2\ell-1)$ with $\v(bc) >0$. Hence we obtain} 
		\# M_{0,2} &= (\ell +1)\ell^{2k-1}.
		\intertext{
		\item If $0<i<k$ is odd, say $ i = 2t -1$. Then 
		}
		\#M_{2t-1,0} =& \ell^{2k-t-1}\Big(\ell^t(\ell+1) - 2 t(\ell-1) - \ell - 1 \Big)  + 2t(\ell -1)\ell^{2k-t-1} \\
		& = (\ell^{t+1} + \ell^t - \ell -1)\ell^{2k- t - 1}.
		\intertext{
		\item If $ i$ is even say $i=2t$ and $D$ is not a square. We obtain
		}
		\# M_{2t,0} &= \ell^{2k - t - 1}\big(\ell^t (\ell+1) - 2 t (\ell-1) - \ell -1 \big) + (2t + 1)(\ell -1)\ell^{2k - t - 1} \\
		& =  (\ell^{t+1} + \ell^t -2)\ell^{2k - t- 1}.
		\intertext{
		\item If $i = 2t$ and $D$ is a square. Then summing over all cases yields
		}
		\#M_{2t,2} &= \ell^{2k - t-1}\big(\ell^t(\ell+1)-2t(\ell-1)- \ell -1 \big)   \\ &+ \ell^{2k-t-2}(2t+1)(\ell-1)(\ell-2) + 2\ell^{2k - t- 2}\big((2t+2)\ell - 2t -1\big) \\
		& =(\ell + 1)\ell^{2k-1}.
		\intertext{
		\item If $D=0$ and $k = 2m$. We obtain
		}
		\#M _{k,2} &= \ell^{4m - m - 1}\big(\ell^m(\ell+1)- 2m(\ell-1)- \ell -1 \big) \\
		&+ \ell^{2m +m - 1}\big((2m+1)\ell -2m \big) \\
		& = \ell^{3m - 1}(\ell^{m+1} + \ell^{m} -1).
		\intertext{
		\item If $D = 0 $ and $k=2m+1$. Then
		}
		 \#M _{k,2} &= \ell^{4m+2 - m - 1 - 1}\big(\ell^{m+1}(\ell+1)- 2(m+1)(\ell-1)- \ell -1 \big)\\ &  + \ell^{2m+1 +m - 1}\big((2m+2)\ell -2m-1 \big) \\
		&= \ell^{3m+1}(\ell^{m+1}+ \ell^m-1).
		\end{align*}
	\end{enumerate}
	In particular the cardinality of $M _{i,j}$ does not depend on the choice of $f_{i,j}$.
\end{proof}

\begin{proof}[Proof of Proposition \ref{prop:GLKTSplit}]
Recall that $$\A_{\ell^k,S}^t  = \{(\tau, \tau') \in GL_{2}(\Z/\ell^k \Z)^2 \mid   \textnormal{char. poly.}\  \tau = \textnormal{char. poly.}\  \tau' \}.$$
So 
\begin{align*}\#\A_{\ell^k,S}^t &= \sum_{f\in \Z/\ell^k \Z[X]} \left(\#\{ \tau \in GL_2(\Z/\ell^k \Z) \mid \textnormal{char. poly.}\ \tau = f \}\right)^2 \\
&= \sum_{i,j} \#P_{i,j} \cdot (\# M_{i,j})^2
 \end{align*}
where the sum is taken over all pairs  $(i,j)$ with $0\le i< k$ and $j = 0,2$ and the pair $(i,j)=(k,2)$. The factors $\#P_{i,j}$ and $\#M_{i,j}$ are computed in  Lemmas \ref{lemma:Plk} and \ref{lemma:Mlk}  respectively.  We will only give the proof if $k$ is odd. If $k$ is even the computation is similar. Suppose that $k=2m+1$. Then
\begin{align*}
\#\A_{\ell^k,S}^t =& \frac{(\ell-1)} 2 \ell^{2k-1} \cdot (\ell-1)^2  \ell^{4k-2} + \frac{(\ell-1)(\ell-2)} 2 \ell^{2k-2} \cdot (\ell+1)^2 \ell^{4k-2} \\
&+ \sum_{t=1}^{m} \Big(  (\ell-1)^2 \ell^{2k-2t-1} \cdot \big(\ell^t(\ell+1) -(\ell+1) \big)^2 \ell^{4k-2t-2} \\
 &+ \frac{(\ell-1)^2} 2 \ell^{2k- 2t-2} \cdot\big( \ell^t(\ell+1) - 2 \big)^2\ell^{4k-2t-2} \\
 &+ \frac{(\ell-1)^2} 2 \ell^{2k- 2t-2} \cdot(\ell +1)^2 \ell^{4k-2} \Big) \\
 & + (\ell-1)\ell^{2m} \cdot \big(\ell^m(\ell+1) - 1 \big)^2 \ell^{6m+2} \\
 =&  \frac{(\ell-1)} 2 \ell^{6k-4}  \big( (\ell-1)^2\ell + (\ell+1)^2 (\ell-2) \big)\\
 & + \sum_{t=1}^{m} \ell^{6k - 4t-4} \Bigg(  \frac{(\ell-1)^2}{2} \big( 2\ell^{2t+1}(\ell+1)^2 - 4 \ell^{t+1}(\ell+1)^2 + 2\ell(\ell+1)^2 \big) \\
 & +  \frac{(\ell-1)^2} 2\big( \ell^{2t}(\ell+1)^2 - 4 \ell^t(\ell+1) + 4\big) \\
  & +  \frac{(\ell-1)^2} 2 \ell^{2t}(\ell+1)^2 \Bigg) \\
 & + (\ell-1)\ell^{8m + 2}\big(\ell^{2m}(\ell+1)^2 - 2\ell^m(\ell+1) +1 \big) \\
 = &\frac{(\ell-1)} 2 \ell^{6k-4} (2\ell^3 - 2 \ell^2-2\ell-2) \\
 & + \frac{(\ell-1)^2} 2 \sum_{t=1}^{m} \ell^{6k - 4t-4}\Big( 2\ell^{2t} ( \ell+1)^3  - 4 \ell^t(\ell+1)(\ell^2 + \ell + 1) + 2 (\ell^2+1)(\ell+2) \Big) \\
 & + (\ell-1)\big((\ell+1)^2 \ell^{10m+2} - 2(\ell+1)\ell^{9m+2} + \ell^{8m+2} \big).
 \intertext{Splitting the summation into three  sums yields}
 \#\A_{\ell^k,S}^t  = &(\ell-1) \ell^{6k-4} (\ell^3 -  \ell^2-\ell-1) \\
 & + (\ell-1)(\ell+1)^2\sum_{t = 1}^{m} (\ell^2-1)\ell^{6k - 2t- 4} \\
 & -2(\ell-1)(\ell+1)\sum_{t = 1}^m (\ell^3-1)\ell^{6k-3t-4} \\
 & + \frac{(\ell-1)(\ell+2)} {(\ell+1)} \sum_{t = 1}^{m} (\ell^4-1)\ell^{6k - 4 t -4} \\
 & + (\ell-1)\big((\ell+1)^2 \ell^{10m+2} - 2(\ell+1)\ell^{9m+2} + \ell^{8m+2} \big).
 \intertext{Computing the telescopic sums and using that $k = 2m+1$ }
 \#\A_{\ell^k,S}^t = &(\ell-1) \ell^{12m+2} (\ell^3 -  \ell^2-\ell-1) \\
 & + (\ell-1)(\ell+1)^2(\ell^{12m+2} - \ell^{10m+2}) \\
 & -2(\ell-1)(\ell+1)(\ell^{12m+2} - \ell^{9m+2})\\
 & + \frac{(\ell-1)(\ell+2)} {(\ell+1)} (\ell^{12m+2} - \ell^{8m+2}) \\
 & + (\ell-1)\big((\ell+1)^2 \ell^{10m+2} - 2(\ell+1)\ell^{9m+2} + \ell^{8m+2} \big).
 \intertext{By sorting the powers of $\ell^m$ we obtain}
 \#\A_{\ell^k,S}^t =  & (\ell-1)\ell^{12m+2} \Big((\ell^3-\ell^2-\ell-1) + (\ell+1)^2 - 2(\ell+1) + \frac{(\ell+2)}{(\ell+1)} \Big) \\
  & + (\ell-1)(\ell+1)^2\ell^{10m+2}(-1+1) \\
  &- 2 (\ell-1)(\ell+1)\ell^{9m+2}(-1+1) \\
  & +(\ell-1)\ell^{8m+2}\Big(-\frac{(\ell+2)}{(\ell+1)}+1\Big) \\
  = & (\ell-1)\ell^{12m+2}\Big(\ell^3 - \ell - 2 +\frac{(\ell+2)}{(\ell+1)} \Big) - \frac{(\ell-1)}{(\ell+1)}\ell^{8m+2} \\
  =& \frac{(\ell-1)}{(\ell+1)}\ell^{12m+2} (\ell^4 + \ell^3 -\ell^2-2\ell - \ell^{-4m}).
  \intertext{Using that $k = 2m+1$ yields}
  \#\A_{\ell^k,S}^t = & \frac{(\ell-1)}{(\ell+1)}\ell^{6k-4} (\ell^4 + \ell^3 -\ell^2-2\ell - \ell^{-2k+2}). \qedhere
\end{align*}
\end{proof}

\bibliography{LTGBib.bbl}
\bibliographystyle{plain}
\Addresses
\end{document}